\documentclass{amsart}[11pt]
 
\usepackage{amscd, amsfonts, amsmath,amsthm, amssymb, color,enumerate , epsf, faktor,graphicx, mathtools, 
 tikz, url ,verbatim ,wasysym , xypic}
 \usepackage{tikz-cd}

\usepackage[all]{xy}
\usetikzlibrary{matrix,arrows}
\usepackage[mathscr]{euscript}

\DeclarePairedDelimiter\floor{\lfloor}{\rfloor}

\usepackage[final]{hyperref}
\hypersetup{ colorlinks = true, linkcolor=blue,   citecolor = black}

\setcounter{section}{0}
\theoremstyle{plain}

 \newtheorem*{theorem*}{Theorem}
\newtheorem{thm}{Theorem}[subsection]
\newtheorem{prop}{Proposition}[subsection]
\newtheorem{defin}{Definition}[subsection]
\newtheorem{lemma}{Lemma}[subsection]
\newtheorem{coro}{Corollary}[subsection]
\newtheorem{remark}{Remark}[subsection]
\DeclareGraphicsRule{.tif}{png}{.png}{`convert #1 `dirname #1`/`basename #1 .tif`.png}

\newcommand{\OK}{\mathcal{O}_K}

\newcommand{\OMa}{\mathcal{O}_{\mathcal{M}}}
\newcommand{\OLa}{\mathcal{O}_{\mathcal{L}}}
\newcommand{\ONa}{\mathcal{O}_{\mathcal{N}}}

\newcommand{\Z}{\mathbb{Z}}

\newcommand{\Qp}{\mathbb{Q}_p}
\newcommand{\Zp}{\mathbb{Z}_p}
\newcommand{\Tr}{\mathbb{T}_{\La/K}}
\newcommand{\OZ}{\mathcal{O}}

\newcommand{\Ka}{\mathcal{K}}
\newcommand{\Ma}{\mathcal{M}}
\newcommand{\La}{\mathcal{L}}
\newcommand{\Na}{\mathcal{N}}

\newcommand{\LTT}{L\{\{T_1\}\}\cdots \{\{T_{d-1}\}\}}

\newcommand{\T}{{\{\{T\}\}}}
\newcommand{\TrM}{\mathbb{T}_{\Ma/K}}

\newcommand{\TT}{\{\{T_1\}\}\cdots \{\{T_{d-1}\}\}}

\newcommand{\id}{(i_1,\dots,i_d)}

\newcommand{\Td}{T_1,\dots,T_{d-1} }
\newcommand{\Tdc}{T_1\cdots T_{d-1} }

\setcounter{tocdepth}{1}

\title{Explicit Reciprocity Laws for Higher Local Fields}

\author{Jorge Fl\'{o}rez}
\thanks{Support for this project was provided by a PSC-CUNY Award, jointly funded by The Professional Staff Congress and The City University of New York.}
\date{\today}
\address{Department of Mathematics, Borough of Manhattan Community College, City University of New York, 199 Chambers Street, New York, NY 10007, USA. jflorez@bmcc.cuny.edu}

\subjclass[2010]{11S31 (primary) 	11S70 (secondary)}

\keywords{Reciprocity laws, Formal groups, Higher local fields, Milnor K-groups}
%\date{}                                           

% Activate to display a given date or no date

%______________________________________________________________________________

\begin{document}
\maketitle

%\frontmatter

\begin{abstract}
 Using  previously constructed   reciprocity laws
for the generalized Kummer pairing of an arbitrary (one-dimensional) formal group, in this article a special consideration is given to  Lubin-Tate formal groups.  
In particular, this allows for a completely explicit description of the Kummer pairing  in terms of  multidimensional $p$-adic differentiation. The results obtained here constitute a generalization, to higher local fields, of the formulas of Artin-Hasse, Iwasawa, Kolyvagin  and Wiles.

\end{abstract}

\tableofcontents

%\section*{Terminology and notation} \addcontentsline{toc}{section}{Terminology and notation}

\section{Introduction}

\subsection{Background}\label{Introduction}
For a prime $p>2$, let  $\zeta_{p^n}$ be a fixed $p^n$th root of unity and  $L$ be the cyclotomic field $\mathbb{Q}_p(\zeta_{p^n})$. The Hilbert symbol for $L$  
\begin{align*}
(,)_{p^n}:\ L^{\times}\times L^{\times}\to \langle \zeta_{p^n} \rangle  
\quad \mathrm{is\ defined\ as}\quad
(u\,,\,w)_{p^n}=\frac{\theta_L(u)\big(\sqrt[p^n]{w}\big)}{\sqrt[p^n]{w}},
\end{align*}
where $\theta_L:L^*\to \text{Gal}(L^{ab}/L)$ is Artin's local reciprocity map.  Iwasawa 
\cite{Iwasawa}  deduced explicit formulas for this pairing in terms of $p$-adic differentiation as follows
\begin{equation}\label{Iwasawa}
(u\,,\,w)_{p^n}=\zeta_{p_n}^{_{\,\text{Tr}_{L/\Qp}(\psi(w)\, \log u)\,/p^n}},\quad \mathrm{where} \quad \psi(w)=-\zeta_{p^n}\, 
w^{-1}\,\frac{dw}{d\pi_{_n}},
\end{equation}
and subject to the conditions $v_L(w-1)>0$ and $v_L(u-1)>2v_L(p)/(p-1)$. Here $v_L$ is the discrete valuation of $L$, $\pi_n$ is the uniformizer $\zeta_{p^n}-1$, and $dw/d\pi_n$ denotes $g'(\pi_n)$ for any power series $g(x)\in \Z_p[[X]]$ such that $w=g(\pi_n)$.

Iwasawa's formula stemmed from the Artin-Hasse formula
\begin{equation}\label{Hasselin}
(u\,,\,\zeta_{p^n})_{p^n}=\zeta_{p_n}^{_{\,\text{Tr}_{L/\Qp}(-\log u)\,/p^n}}, 
\end{equation}
where  $u$ is any principal unit in $L$.

 In this paper we will deduce generalized  formulas of (\ref{Iwasawa}) for the Kummer pairing  associated to a Lubin-Tate formal group  and an arbitrary higher local field of mixed characteristic (cf. Theorem \ref{Lubinazo}). Like Iwasawa, we will obtain these formulas  from a generalized Artin-Hasse formula for the Kummer pairing. The method we use to derive these formulas is inspired by that of Kolyvagin in \cite{koly}. This has  important consequences since Theorem \ref{Lubinazo} gives an exact generalization of Wiles \cite{Wiles} explicit reciprocity laws to arbitrary higher local fields.
  As a byproduct, we will obtain an exact generalization of \eqref{Iwasawa} (cf. Theorem \ref{Iwasawa-General}) and \eqref{Hasselin} (cf. Corollary \ref{Lemma-Ev-torison-3}) to higher local fields.
  
The main result in this paper, namely Theorem \ref{Lubinazo}, is supported by \cite{Florez} Theorem 5.3.1 which describes a general reciprocity law for the Kummer pairing associated to an arbitrary (one-dimensional) formal group  and an arbitrary higher local field in terms of $p$-adic multidimensional derivations. However it is not a straightforward consequence of it as the results in \cite{Florez} do not provide the sharpest results in the particular case when the Kummer pairing is associated to a Lubin-Tate formal group. In order to obtain the strongest possible results in Theorem \ref{Lubinazo} we need to make a considerable detour by relating \cite{Florez} Theorem 5.3.1  to   certain canonical multidimensional derivations associated to the torsion points of a Lubin-Tate formal group (cf. Section \ref{cann}) 
 and, also, by explicitly computing certain invariant attached to the Galois representation associated to the Tate module of the formal group (cf. Section \ref{Computations of the invariants}). 
  
   We point out that    Kurihara (cf. \cite{Kurihara} Theorem 4.4) and Zinoviev (cf. \cite{Zinoviev} Theorem 2.2 ) also have a generalization of \eqref{Iwasawa} for the \textit{generalized Hilbert symbol} associated to an arbitrary higher local field.  Theorem \ref{Iwasawa-General} further generalizes \eqref{Iwasawa}  to the Kummer pairing of an arbitrary Lubin-Tate formal group and an important family of higher local fields. In particular, when we take for the Lubin-Tate formal group   the multiplicative formal group $X+Y+XY$ and the higher local field to be $\Qp(\zeta_{p^n})\TT$, Theorem \ref{Iwasawa-General} coincides with the results of  Kurihara and Zoniviev.  
   
   In \cite{Fukaya}, Fukaya  remarkably describes also  similar formulas to those of Theorem \ref{Iwasawa-General} for  the Kummer pairing  associated to an arbitrary $p$-divisible group $G$. Even though Fukaya's formulas  encompass also arbitrary higher local fields (containing the $p^n$th torsion group of $G$), Theorem \ref{Iwasawa-General} in its specific conditions is sharper than the results in \cite{Fukaya}  for Lubin-Tate formal groups as we explain in more detail below.

Finally, it is interesting to note that in Zinoviev's work there is  also stated a higher dimensional version of \eqref{Hasselin} for the generalized Hilbert symbol (cf. \cite{Zinoviev} Corollary 2.1). Our Corollary  \ref{Lemma-Ev-torison-3} further extends   \eqref{Hasselin} to
arbitrary  Lubin-Tate formal groups and arbitrary higher local fields, in particular subsuming the formulas of Zinoviev. Moreover, we will prove  stronger results (cf. Proposition \ref{Lemma-Ev-torison-2} and Equation \eqref{Stronger-Artin-Hasse}) which are not found, \textit{a priori}, in any of the formulas in literature.

\subsection{Description of the results}\label{Description of the results}

Let $K/\Qp$ be  a local field with ring of integers $C=\OK$,
 $k_K$ its residue field and $q=|k_K|$. Denote by $\varrho$ the ramification index $e(K/\Qp)$. Fix a uniformizer $\pi$ for $K$.
Let $\Lambda_\pi$ be the subset of $C[[X]]$ consisting of the series $f$ such that
\begin{enumerate}
\item $f(X)\equiv \pi X \pmod{ \deg 2}$.
\item $f(X)\equiv  X^q \pmod{ \pi}$.
\end{enumerate}
For a fixed $f\in \Lambda_{\pi}$, let $F=F_f$  be the Lubin-Tate formal group associated to $f$ (cf. \cite{cassels} VI \S 3.3 or \cite{koly} \S7). 
The ring $C$ is identified with  the endomorphism ring of $F$ as follows: 
$C\to \text{End}(F):a\to [a]_f(X):=aX+\cdots$. Via this identification, we denote by $f^{(n)}(X)$  the power series $[\pi^n]_f(X)$, for $n\geq 1$.
Let $\kappa_{f,n}\,(\simeq C/\pi^n C)$ be the $\pi^n$th torsion group of $F$, $\kappa_{f}=\varprojlim \kappa_{f,n}\,(\simeq C)$ be the Tate module and 
$\kappa_{f,\infty}=\cup_{n\geq 1} \kappa_{f,n}$ ($\simeq K/C$).  
We will fix  a generator $e_{f}$ for $\kappa_f$ and let $e_{f,n}$ be the corresponding reduction to  the group $\kappa_{f,n}$. Denote by $\Ka$  the 
$d$-dimensional standard local field $K\TT$, and by  
 $K_{\pi,n}$, $K_{\pi,\infty}$, $\Ka_{\pi,n}$ and $\Ka_{\pi,\infty}$, respectively, the  fields 
 $K(\kappa_{f,n})$, $K(\kappa_{f,\infty})$, $\Ka(\kappa_{f,n})$ and $\Ka(\kappa_{f,\infty})$, respectively. 
Since we already fixed $\pi$ and $f$, for convenience sometimes we will omit the subscripts $\pi$ and $f$ in the above notation. 

Finally, we will denote by $l_f$ the principal logarithm of $F$ (cf. \cite{koly} Section 1.3), i.e., $l_f(X)$ is the power series $\sum_{i=1}^{\infty} (a_i/i)X^i$ with $a_i\in C$ and $a_1=1$, such that $l_f(F(X,Y))=l_f(X)+l_f(Y)$. Moreover, $l_f\circ [a]_f=al_f$ for all $a\in C$.

 In order to describe our formulas, let $\La\supset K$ be a $d$-dimensional local field containing the torsion group
$\kappa_{f,n}$, with ring of integers $\OLa$ and  maximal ideal $\mu_\La$. Fix a system of local uniformizers $T_1,\dots, T_{d-1}$ and $\pi_\La$ of $\La$.  
We will denote by $F(\mu_{\La})$ the set $\mu_{\La}$ endowed with
the group structure of $F$. For $m\geq 1$ we let  $\La_m=\La(\kappa_{f,m})$ and 
we fix a uniformizer $\gamma_m$ for $\La_m$.  
 
We define the Kummer pairing (cf. \cite{Florez} \S2.2)
\[
  (,)_{\La,n}: K_d(\La) \times F(\mu_{\La}) \to \kappa_{f,n} \quad \mathrm{by}\quad
           (\alpha, x)\mapsto (\alpha, x)_{\La,n}:=\Upsilon_\La(\alpha)(z)\ominus_f z,
\]
where $K_d(\La)$ is the $d$th 
Milnor $K$-group of $\La$ (cf. \cite{Florez}\S2.1.1), 
$\Upsilon_\La:K_d(\La)\to G_\La^{ab}$  is  
Kato's reciprocity map for $\La$ (cf. \cite{Florez} \S2.1.4), $f^{(n)}
(z)=x$ and $\ominus_f$ is the subtraction 
in the formal group $F_f$. 

To be consistent  with \cite{Florez}, sometimes we will use the following notation. Since $(\alpha,x)_{\La,n}\in \kappa_{f,n}$ and we are assuming $e_n=e_{f,n}$
is a generator  of $\kappa_{f,n}$ as a $C/\pi^nC$-module, then there exists 
an element $(\alpha,x)_{\La,n}^1\in C/\pi^nC$ such that
\begin{equation}\label{notation-kummer-pairing}
(\alpha,x)_{\La,n}=\big[\,(\alpha,x)_{\La,n}^1\,\big]_f(e_{f,n}).
\end{equation}

The main result in this paper is 
the following (cf. Theorem \ref{Lubinazo} for the precise formulation).

\begin{theorem*}
Let $r$ be maximal and $r'$  minimal  such that $K_{\pi,r}\subset \La\cap K_{\pi,\infty}\subset K_{\pi,r'}.$ Take $s\geq \max\{r', n+r+\log_q(e(\La/\Ka_{\pi,r}))\}$; where $e(\La/\Ka_{\pi,r})$ is the ramification index of the extension $\La/\Ka_{\pi,r}$. 
Then 
\begin{equation}\label{maine}
\big(\,N_{\La_s/\La}(\alpha)\,,x\,\big)_{\La,n}
=\big[\, 
    \mathbb{T}_{\La_s/K}
      \big(\, 
         QL_s(\alpha)\, l_f(x)\,
       \big)\,
           \big]_f (e_{f,n}),
\end{equation}
where
\[
QL_s(\alpha)=\frac{  T_1\cdots T_{d-1} }{\, \pi^{s}\,l_f'(e_{f,s})\,\frac{\partial e_{f,s}} {\partial T_d} }\, \frac{ \det \left[ \frac{\partial a_i} {\partial T_j}\right]_{1\leq i,j\leq d}}{a_1\cdots a_d},
\]
for all $x\in F(\mu_\La)$ and  all  $\alpha=\{a_1,\dots, a_d\}$ in $ \bigcap_{t\geq s} N_{\La_t/\La_s} \big(\,K_d(\La_t)\,\big)$. Here $N_{\La_s/\La}$ and $N_{\La_t/\La_s}$ denote the norm on Milnor $K$-groups (cf. \cite{Florez} \S2.1.2), $l_f$ is the logarithm of the formal group, $\mathbb{T}_{\La_s/K}$ denotes the generalized trace and $\frac{\partial a_i} {\partial T_j}$ denotes the partial derivative of an element in $\La_s$ with respect to the system of local uniformizers $T_1,\dots, T_{d-1}, T_d=\gamma_s$ of $\La_s$ (cf. Section  \ref{Terminology and Notation}).
\end{theorem*}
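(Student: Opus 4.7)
The plan is to obtain the formula as a specialization of the general reciprocity law in \cite{Florez} Theorem 5.3.1, applied over the larger field $\La_s$, and then push down to $\La$ via the projection (norm) formula for the Kummer pairing.

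First I would invoke norm compatibility of the Kummer pairing, namely the identity
\[
(\,N_{\La_s/\La}(\alpha)\,,\, x\,)_{\La,n} \;=\; (\,\alpha\,,\, x\,)_{\La_s,n}
\]
for $\alpha\in K_d(\La_s)$ and $x\in F(\mu_\La)\subset F(\mu_{\La_s})$. This reduces the problem to proving the explicit formula for $(\alpha,x)_{\La_s,n}$. The point of passing to $\La_s$ is that, thanks to the choice $s\ge r'$, the extension $\La_s/\La$ absorbs all the relevant Lubin-Tate torsion, so the element $e_{f,s}$ is naturally available and the system $T_1,\dots,T_{d-1},T_d=\gamma_s$ forms a genuine set of local parameters for $\La_s$.

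Next I would apply \cite{Florez} Theorem 5.3.1 to the Lubin-Tate formal group $F=F_f$ and the field $\La_s$. That theorem yields a formula of the same shape as \eqref{maine}, but with the Lubin-Tate-specific prefactor replaced by a more abstract scalar built from a derivation attached to the Galois representation on the Tate module $\kappa_f$. At this point the condition $s\ge n+r+\log_q e(\La/\Ka_{\pi,r})$ enters, guaranteeing both that $e_{f,s}$ is deep enough for $\pi^s l_F'(e_{f,s})\partial e_{f,s}/\partial T_d$ to be a valid denominator and that $\alpha\in\bigcap_{t\ge s}N_{\La_t/\La_s}K_d(\La_t)$ lies in the domain where \cite{Florez} Theorem 5.3.1 applies.

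The core step, and the one I expect to be the main obstacle, is identifying the abstract invariant provided by \cite{Florez} Theorem 5.3.1 with the explicit Lubin-Tate expression
\[
\frac{T_1\cdots T_{d-1}}{\pi^{s}\, l_F'(e_{f,s})\,\partial e_{f,s}/\partial T_d}.
\]
This is precisely the content advertised in Sections \ref{cann} and \ref{Computations of the invariants}. The argument proceeds by (i) constructing the canonical multidimensional derivation determined by the torsion point $e_{f,s}$, using the fact that the Galois action on $\kappa_f$ is given by a single character $\chi_f$ valued in $C^\times$, so that the Galois-theoretic derivation from \cite{Florez} is governed entirely by $l_F$ and by the change of uniformizers $\gamma_s\leftrightarrow e_{f,s}$; and (ii) performing the chain-rule computation that relates a generic $d$-tuple $T_1,\dots,T_{d-1},\gamma_s$ to the parameters $T_1,\dots,T_{d-1},e_{f,s}$, which produces the Jacobian factor $\partial e_{f,s}/\partial T_d$ and the logarithmic derivative $l_F'(e_{f,s})$. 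Once this identification is made, substituting $l_F(x)$ for $x$ (permissible because $s\ge n+r+\log_q e(\La/\Ka_{\pi,r})$ puts $x$ in the domain of convergence of $l_F$ modulo $\pi^n$) and taking the generalized trace $\mathbb{T}_{\La_s/K}$ yields \eqref{maine} exactly.
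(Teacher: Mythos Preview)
Your outline misses the key structural step and misreads where the numerical bound on $s$ actually bites.

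You propose to apply \cite{Florez} Theorem~5.3.1 directly over $\La_s$. But that theorem has its own hypotheses on the level: in the paper's notation it is stated for a tower level $t$ with $t=2k+\varrho+1$ (and companion integers $k,m$), which is typically much larger than the $s$ in the statement here. Nothing in the hypothesis $s\ge\max\{r',\,n+r+\log_q e(\La/\Ka_{\pi,r})\}$ forces $s$ to satisfy those conditions, so Theorem~5.3.1 is not available at level $s$ as written. The paper therefore does \emph{not} argue as you suggest: it first applies Theorem~5.3.1 at a suitably large level $t$, identifies the abstract derivation there with $QL_t$ (this is exactly where the Artin--Hasse computation of Section~\ref{Computations of the invariants}, i.e.\ the equality $c_\beta=-1/\pi^t$, is used), and only then \emph{descends} from $t$ to $s$. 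The descent is the substance you are missing: it uses Proposition~\ref{levelQL}, namely $QL_s\bigl(N_{t/s}(\epsilon)\bigr)=\text{Tr}_{t/s}\bigl(QL_t(\epsilon)\bigr)$ modulo $(\pi^s/\pi_1\gamma_s)P_s$, together with Lemma~\ref{berr} to represent $\alpha\in K_d(\La_s)'$ as $N_{t/s}(\epsilon)$ with $\epsilon$ of a special shape.

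Relatedly, the inequality $s\ge n+r+\log_q e(\La/\Ka_{\pi,r})$ does not play the role you assign it (convergence of $l_F$ on $x$, or ``depth'' of $e_{f,s}$). Its actual job is the trace containment $\text{Tr}_s\bigl((\pi^s/\pi_1\gamma_s)P_s\bigr)\subset \pi^n R_\La$, which is what allows the error term produced by the descent $t\rightsquigarrow s$ to vanish after pushing down to $\La$. That containment is proved via a separate valuation argument on $f^{(s')}(x)$ and $l_F$; it is not a by-product of \cite{Florez} Theorem~5.3.1. So the core missing idea in your plan is the two-level strategy: identify at a high level $t$ (via the Artin--Hasse invariant computation), then descend to $s$ (via the norm--trace compatibility of $QL$ and the trace bound).
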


Just like in the work of Iwasawa, the formula (\ref{maine}) will be deduced
from the following generalized Artin-Hasse formula (cf. Proposition \ref{Artin-Hasse})
 \begin{equation}\label{Artinios}
 \big(\, \{T_1,\dots, T_{d-1},u\},\, e_{f,t}\, \big)_{\Ma,k}
    =\left[\mathbb{T}_{\Ma/S}\left(\, \log(u)\, \left( -\frac{1}{\pi^t}\right) \, \right) \right]_f(e_{f,k}),
 \end{equation}
for  $k>0$ and $t$ sufficiently large with respect to $k$ ( more concretely, in the notation of Section \ref{Computations of the invariants},  $(k,t)$ is an \textit{admissible pair}), and where $\Ma=\La_t$, $ u\in V_{\Ma,1}=\{u\in \OMa:v_{\Ma}(u-1)>v_{\Ma}(p)/(p-1)\}$, $\log$ is the usual logarithm and $v_{\Ma}$ is the discrete valuation of $\Ma$.

The above theorem is an exact generalization of Wiles reciprocity laws to arbitrary higher local fields (cf. \cite{Wiles} Theorem 1.). 

By doing a detailed analysis of how the formulas \eqref{maine} are  transformed  when  varying the uniformizer $\pi$ of $K$  and the power series $f\in \Lambda_{\pi}$   we may prove, in Theorem \ref{Iwasawa-General}, the following higher dimensional version of Iwasawa's reciprocity laws \eqref{Iwasawa} for $\La=\Ka_{\pi,n}$:
\begin{equation}\label{Iwasawa-Gen-Intro}
(\,\alpha\,,x\,)_{\La,n}
=\big[\, 
    \mathbb{T}_{\La/K}
      \big(\, 
         QL_n(\alpha)\, l_f(x)\,
       \big)\,
           \big]_f (e_{f,n})
\end{equation}
for all $\alpha\in K_d(\La)$ and all $x\in F_f(\mu_{\La})$ such that $v_{\La}(x)\geq 2\,v_{\La}(p)/(\varrho\,(q-1))$. In particular, taking $K=\Qp$, $f(X)=(1+X)^{p^n}-1$, $F_f(X,Y)$ the multiplicative formal group $X+Y+XY$ and $\La$ the  cyclotomic higher local field $\Qp(\zeta_{p^n})\TT$, then \eqref{Iwasawa-Gen-Intro} coincides with   \cite{Kurihara} Theorem 4.4 and  \cite{Zinoviev} Theorem 2.2.

We must remark also that Fukaya \cite{Fukaya} has similar formulas to \eqref{Iwasawa-Gen-Intro} that, moreover, extend to arbitrary formal groups and arbitrary higher local fields. However, for Lubin-Tate formal groups the formula \eqref{Iwasawa-Gen-Intro} is sharper for $\La=\Ka_{\pi,n}$ , as the condition on $x\in F(\mu_{\La})$ in \cite{Fukaya} is $v_{\La}(x)>2v_{\La}(p)/(p-1)+1$.

In the deduction of \eqref{Iwasawa-Gen-Intro} we will  also prove, in Corollary \ref{Lemma-Ev-torison-3}, the following Artin-Hasse formula:   Let $\Ma$ be an arbitrary $d$-dimensional local field with $T_1,\dots, T_{d-1},\pi_{\Ma}$ as a system of local uniformizers  such that $\Ma \supset \Ka_{\pi,n}$, then
\begin{equation}\label{Artin-Hasse-Intro}
\,
\big(\,\{T_1,\dots, T_{d-1},e_{g,n}\}\,,\,x\,\big)_{\Ma,n}
=\left[ 
\mathbb{T}_{\Ma/S}\left(\,\frac{1}{\,\xi^n\,l'_g(e_{g,n})\,e_{g,n}\,}\, l_f(x)\,\right)\,
\right]_f \,(e_{f,n})
\end{equation}
for all $x\in F_f(\mu_{\Ma})$, where $g$ is a Lubin-Tate series in $\Lambda_{\pi}$ which is also a monic polynomial and $e_{g,n}=[1]_{f,g}(e_{f,n})$; here $[1]_{f,g}$ is the isomorphism of $F_f$ and $F_g$ congruent to $X\pmod{\deg 2}$ and $l_g$ is the logarithm of $F_g$. By  taking $K=\Qp$, $f(X)=(1+X)^{p^n}-1$, $F_f(X,Y)$ the multiplicative formal group $F_m(X,Y)=X+Y+XY$ and $\Ma$ the  cyclotomic higher local field $\Qp(\zeta_{p^n})\TT$,  then \eqref{Artin-Hasse-Intro} coincides with the Artin-Hasse formula of   Zinoviev in \cite{Zinoviev} Corollary 2.1 (25).

Furthermore, \eqref{Artin-Hasse-Intro} will be deduced as a consequence of the following stronger result (cf. Proposition \ref{Lemma-Ev-torison-2}). 
Let $\La =\Ka_{\pi,n}$ and take $e_{g,n}$ and $l_g$ as above, then
\begin{equation}\label{Artin-Hasse-complete}
\,
\big(\,\{u_1,\dots, u_{d-1},e_{g,n}\}\,,\,x\,\big)_{\La,n}
=\left[ 
\Tr\left(\,\frac{\det \left[\frac{\partial u_i}{\partial T_j}\right]}{u_1\cdots u_{d-1}}\,\frac{\Tdc}{\,\xi^n\,l'_g(e_{g,n})\,e_{g,n}\,}\, l_f(x)\,\right)\,
\right]_f \,(e_{f,n})
\end{equation}
for all units $u_1,\dots, u_{d-1}$ of  $\La$ and  all $x\in F_f(\mu_{\La})$.

 Moreover, in the particular situation where  $K=\Qp$, $\pi=p$, $f(X)=(X+1)^p-1$, $F_f(X,Y)=F_m(X,Y)$, $l_f(X)=\log(X+1)$, $\La=\Qp(\zeta_{p^n})\TT$,  we further have  an additional formula ( cf. Equation \eqref{Stronger-Artin-Hasse}):
\begin{equation}\label{Hasse-sto}
\big(\,\{u_1,\dots, u_{d-1},\zeta_{p^n}\}\,,\,x\,\big)_{\La,n}
=\left[ 
\mathbb{T}_{\La/\Qp}\left(\,\frac{1}{\,p^n\,}\, \frac{\det \left[\frac{\partial u_i}{\partial T_j}\right]}{u_1\cdots u_{d-1}} \,l_{f}(x)\,\right)\,
\right]_{f} (e_{f,n})
\end{equation}
for all units $u_1,\dots, u_{d-1}$ of  $\La$ and all $x\in F_f(\mu_{\La})$.  For $u_1=T_1,\dots, u_{d-1}= T_{d-1}$ we obtain  \cite{Zinoviev} Corollary 2.1 (24).

The sharper Artin-Hasse formulas \eqref{Artin-Hasse-complete} and \eqref{Hasse-sto} are not contained in any of the reciprocity laws in the literature.

This paper is organized as follows. In Section \ref{Canonical derivations and logarithmic derivatives} we formally construct  the canonical derivations and logarithmic derivatives describing the Kummer pairing. In Section \ref{Computations of the invariants} we derived an Artin-Hasse type formula (\ref{Artinios}) for the generalized Kummer pairing. 
In Section \ref{Generalized Kolyvagin formulas} we derived the formula (\ref{maine}) from the Artin-Hasse formula (\ref{Artinios}).  Finally, in Section \ref{Comparison of formulas} we will prove the formulas \eqref{Iwasawa-Gen-Intro}, \eqref{Artin-Hasse-Intro}, \eqref{Artin-Hasse-complete} and \eqref{Hasse-sto}.

\subsection{Notation}\label{Terminology and Notation}
We will fix a prime number $p>2$. 
If $x$ is a real number then $\floor*{x}$ 
denotes the greatest integer $\leq x$.

 We will denote by $D(M/L)$ the different of a finite  extension of local fields $M/L$.

 If $R$ is a discrete valuation ring, the symbols  $v_R$, $\mathcal{O}_R$, $\mu_R$ and $\pi_R$
will always denote the valuation, ring of integers, maximal ideal, and some fix uniformizer of $R$, respectively.

 Let $L$ be a complete discrete valuation field and define
\[
   \La=L\{\{T\}\}=\left\{\; \sum_{-\infty}^{\infty}a_iT^i:\ a_i\in
   L,\ \inf v_L(a_i)>-\infty,\ \lim_{i\to -\infty}v_L(a_i)
    =+\infty \  \right\}.
\]
Let $v_{\La}(\sum a_iT^i)=\min_{i\in \Z} v_L(a_i)$, so $\OZ_{\La}=\OZ_{L}\{\{T\}\}$ and
$\mu_{\La}=\mu_L\{\{T\}\}$. Observe that the residue field $k_{\La}$ of $\La$ is $k_L((\overline{T}))$, where $k_L$ is the residue field of  $L$. Associated to $\La$ we have the map $c_{\La/L}:\La\to L$ such that $c_{\La/L}(\sum_{-\infty}^{\infty}a_iT^i)=a_0$.

We define $\La=\LTT$  and $c_{\La/L}$ inductively.  We also define $\frac{\partial \alpha}{\partial T_k}:\OLa\to \OLa$, $k=1,\dots, d-1$ to be the partial derivative of $\alpha \in \OLa$ with respect to its canonical Laurent expansion in $\La$ (cf. \cite{Florez} Definition 4.1.2).

 For a $d$-dimensional local field $\La\supset K$ let $T_1,\dots,T_{d-1}$ and $\pi_{\La}$ denote a system of uniformizers,  and let $k_\La=\mathbb{F}((\overline{T}_1))\cdots ((\overline{T}_{d-1}))$ be its residue field. Let $\La_{(0)}$ be the standard field $L_{(0)}\TT$, where the one-dimensional local field $L_{(0)}$ is  the maximal unramified extension of $K$ contained in $\La$. In particular, $\La/\La_{(0)}$ is a finite totally ramified extension. 

For $g(X)= a_0+\cdots+a_kX^k\in \mathcal{O}_{\La_{(0)}}[X]$ we denote by $\frac{\partial g}{\partial T_i}(X)$, $i=1,\dots, d-1$, the polynomial
\[
\frac{\partial a_k}{\partial T_i}X^k+\cdots+\frac{\partial a_0}{\partial T_i}\in \mathcal{O}_{\La_{(0)}}[X] \quad (i=1,\dots, d-1)
\] 
and 
\[\frac{\partial g}{\partial T_d}(X)=g'(X)\quad (i=d).
\]
If $a\in \OLa$, let $g(X)\in \mathcal{O}_{\La_{(0)}}[X]$ such that $a=g(\pi_\La)$. Then we denote by $\frac{\partial a}{\partial T_i}$ the element $\frac{\partial g}{\partial T_i}(\pi_\La)$, $i=1,\dots, d$.

 We define the generalized trace $\mathbb{T}_{\La/K}$ to be the composition $\text{Tr}_{L_{(0)}/K}\circ c_{\La_{
 (0)}/L_{(0)}} \circ \text{Tr}_{\La/\La_{(0)}}$. We let $D(\La/K)$ denote the different with respect to the pairing $\langle,\rangle:\La\times \La\to K$: $(x,y)\to \mathbb{T}_{\La/K}(xy)$.
 
 Let $\Omega_{\OK}(\OLa)$ be the module of K\"{a}hler differentials of $\OLa$ over $\OK$, and  $\mathfrak{D}(\La/K)$ be the annihilator ideal of the torsion part of $\Omega_{\OK}(\OLa)$. We denote by $\hat{\Omega}_{\OK}(\OLa)$ its $p$-adic completion and by $\hat{\Omega}^{\,d}_{\OK}(\OLa)$ the $\OLa$-module $\wedge_{\OLa}^d \hat{\Omega}_{\OK}(\OLa)$. From \cite{Florez} Section 6.5, we have that $\hat{\Omega}^{\,d}_{\OK}(\OLa)$ is generated by 
  $dT_1\wedge \cdots \wedge dT_{d-1}\wedge d\pi_{\La}$ and, moreover, there is an isomorphism of $\OLa$-modules 
 \begin{equation}\label{Khal}
 \hat{\Omega}^{\,d}_{\OK}(\OLa)\
\cong \
 \OLa/\mathfrak{D}(\La/K).
 \end{equation}
 We also have that $\mathfrak{D}(\La/K)\,|\,D(\La/K)$. In particular, if $\La$ is a standard $d$-dimensional local field, then $\mathfrak{D}(\La/K)=D(\La/K)$.
\section{Canonical derivations and logarithmic derivatives}\label{Canonical derivations and logarithmic derivatives}

In this section we will give an explicit construction of the logarithmic derivative $QL_s$
in the formula (\ref{maine}). Then, in Section \ref{Generalized Kolyvagin formulas} we will show that (\ref{maine}) holds.

The following two propositions of Kolyvagin \cite{koly} will 
be needed in the construction 
of the logarithmic derivative and in the deduction of its main properties; we state them here for easy reference. Let $\overline{K}$ be a fixed algebraic closure of $K$ and   $\mathcal{O}_{\overline{K}}$ be its ring of integers.
Denote by $\Omega_{\OK}(\mathcal{O}_{\,\overline{K}})$  the module of differentials of $\mathcal{O}_{\,\overline{K}}$ with respect to $\OK$.

\begin{prop}\label{generators}
       Let $w_n=l_f'(e_n)\,de_n\in \Omega_{\OK}(\mathcal{O}_{\,\overline{K}})$, $n\geq 1$. Then $\{w_n\}_{n\geq1}$ generate 
       $\Omega_{\OK}(\mathcal{O}_{\,\overline{K}})$ as an
       $\mathcal{O}_{\,\overline{K}}$-module, and the following compatibility relationship holds
       \begin{equation}\label{relationdifferential}
        w_{n}=\pi w_{n+1}.
       \end{equation}
       
\end{prop}
\begin{proof}
cf. \cite{koly} Proposition 7.9.
\end{proof}

Let  $K_n=K(\kappa_n)$ and $K_{\infty}=K(\kappa_{\infty})$, 
where $\kappa_{\infty}=\cup \kappa_n$. Fix a uniformizer $\pi_n$ of $K_n$ and let 
\[
\Ka_n=K_n\TT\quad  \mathrm{and}\quad \Ka=K\TT.
\] 
The extension $K_n/K$ 
is totally ramified and $e_n$ is a uniformizer for $K_n$. Therefore, we will assume from now on that $\pi_n=e_n$.
Moreover, $[K_n:K]=q^n-q^{n-1}$ and $|\kappa_{n}|=q^n$, where $q$ is the size of the residue field of $K$, i.e, $|k_K|=q$ (cf. \cite{koly} Section 7.1.2).

Let   $\tau:\text{Gal}(\overline{K}/K)\,\to\, \OK^*$ be the Galois  representation 
induced by the action of $\text{Gal}(\overline{K}/K)$ on the Tate module $\kappa=\varprojlim \kappa_n\,(\simeq C)$  (cf. $\S$ 5.2,  
Equation (42) of \cite{Florez} ). This induces an embedding 
 \begin{equation}\label{Lubin-Tate-isom}
\tau_n: \text{Gal}(\Ka_n/\Ka)\to (\OK/\pi^n\OK)^*,
 \end{equation}
 which turns out to be an isomorphism since $[\Ka_n:\Ka]$  and $|(\OK/\pi^n\OK)^*|$ are both equal to $q^n-q^{n-1}$.

Finally, we notice that   $\text{Gal}(\overline{K}/K)$ acts on $\Omega_{\OK}(\mathcal{O}_{\overline{K}})$  as follows:  $(b\,da)^g:=b^gda^g$, for $a,b\in \mathcal{O}_{\overline{K}}$ and $g\in \text{Gal}(\overline{K}/K)$. The following proposition describes the behavior of the  differentials $w_n$  under this action.

\begin{prop}\label{galois-action}
       $w_n^g=\tau(g)w_n$ for all $g\in\normalfont  \text{Gal}(\overline{K}/K)$. 
\end{prop}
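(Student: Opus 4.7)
The plan is to unwind the definitions and exploit the fundamental functional equation of the Lubin--Tate logarithm $l = l_f$, namely $l([a]_f(X)) = a\cdot l(X)$ for all $a \in C$.

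First, I would set $a = \tau(g) \in C$. By the very definition of the Galois representation $\tau$ on the Tate module (see Equation (42) of \cite{Florez}), the action of $g$ on the torsion points is given via formal multiplication: $g(e_{f,n}) = [a]_f(e_{f,n})$. Next, I would make precise what $w_n^g$ means. Since $w_n = l'(e_n)\, de_n$ lies in $\Omega_{C}(\mathcal{O}_{\,\overline{K}})$, the Galois action is induced from that on $\mathcal{O}_{\,\overline{K}}$ via $g \cdot (y\, dx) = g(y)\, d(g(x))$. Because $l'(X) \in K[[X]]$ has coefficients in $K$ (fixed by $g$), one has $g(l'(e_n)) = l'(g(e_n)) = l'([a]_f(e_n))$. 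Similarly, $d(g(e_n)) = d([a]_f(e_n)) = [a]_f'(e_n)\, de_n$ by the chain rule for Kähler differentials (applied to the polynomial $[a]_f(X) \in C[[X]]$ evaluated at $e_n$, using that $C$ is in the base).

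The core of the argument is then the differentiation of the functional equation $l([a]_f(X)) = a\, l(X)$ with respect to $X$, which gives
\[
l'\!\big([a]_f(X)\big)\cdot [a]_f'(X) \ =\ a\cdot l'(X).
\]
Specializing at $X = e_n$ and multiplying by $de_n$, this produces
\[
w_n^g \ =\ l'\!\big([a]_f(e_n)\big)\cdot [a]_f'(e_n)\, de_n \ =\ a\, l'(e_n)\, de_n \ =\ \tau(g)\, w_n,
\]
which is exactly the claim.

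There is no real obstacle here beyond bookkeeping: the only subtlety is checking that the chain rule for $d([a]_f(e_n))$ is legitimate in $\Omega_{C}(\mathcal{O}_{\,\overline{K}})$ (i.e.\ that $[a]_f \in C[[X]]$ converges on $e_n$ and that its derivative is computed term-by-term in the $\mathcal{O}_{\,\overline{K}}$-module of continuous differentials), and that one has the standing identification $\pi_n = e_n$ chosen just above, so that the generators of Proposition \ref{generators} are really the $w_n$ under discussion. Once these conventions are in place, the computation above is a one-line consequence of the functional equation of the Lubin--Tate logarithm.
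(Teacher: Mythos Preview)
Your argument is correct. The paper itself does not give a proof but merely cites \cite{koly} \S7.2.3; the computation you wrote out---differentiating the functional equation $l([a]_f(X)) = a\,l(X)$ and combining it with $g(e_n) = [\tau(g)]_f(e_n)$ and the chain rule in $\Omega_C(\mathcal{O}_{\overline{K}})$---is exactly the standard verification and is what one finds in Kolyvagin's paper at the cited location.
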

\begin{proof}
cf. \cite{koly} $\S$ 7.2.3.
\end{proof}

For $d$-dimensional local fields the Galois action on the module of K\"{a}hler differentials is defined in a similar fashion. More specifically, if  $\La$ is a finite extension of $\Ka$ we define the action of $\text{Gal}(\overline{\Ka}/\Ka)$ on $\hat{\Omega}_{\OK}(\OLa)$,
where $\overline{\Ka}$ is a fixed algebraic closure of $\Ka$, as follows:  
$(b\,da)^g:=b^gda^g$, for $a,b\in \OLa$ and $g\in \text{Gal}(\overline{\Ka}/\Ka)$. 
Moreover, we extend this action to  $\hat{\Omega}_{\OK}^d(\OLa)$ as
\begin{equation}\label{action-diff}
(b\,da_1\wedge \cdots \wedge da_d)^g:=b^g(da_1)^g\wedge \cdots \wedge (da_d)^g=b^gda_1^g\wedge \cdots \wedge da_d^g,
\end{equation}
for all $g\in \text{Gal}(\overline{\Ka}/\Ka)$ and  all $b,a_1,\dots, a_d \in \OLa$.
 
\subsection{The canonical derivations $Q_{\Ma,s}$}\label{cann} Let $\Ma$ be a finite extension of $\Ka_s$ with   $T_1,\dots,T_{d-1},\pi_\Ma$ as a system of local uniformizers. Let $\pi_1$ be a uniformizer for $K_1=K(\kappa_1)$.
We define
\[
P_\Ma:=\big(\,1/(\,\pi_1D(\Ma/K)\,)\,\big)\OMa=\{x\in \Ma:\ v_{\Ma}(x)\geq -v_{\Ma}(\pi_1D(\Ma/K)) \}.
\] 
In this section we are going to introduce a 
$d$-dimensional derivation  
\begin{equation*}\label{Lubinderi}
Q_{\Ma,s}:\mathcal{O}_\Ma^d\to P_\Ma/(\pi^s/\pi_1)P_\Ma,
\end{equation*}
over $\mathcal{O}_K$  which is fundamental in the construction of the reciprocity laws (cf. \cite{Florez} Section 4.2 for the definition and properties of $d$-dimensional derivations). Before we can define $Q_{\Ma,s}$, we need the following technical result.

\begin{lemma}\label{tech-lemma} Let  $b'\in \mathcal{O}_\Ma$ be 
such that 
$$dT_1\wedge \cdots \wedge dT_{d-1}\wedge w_s
=b'\, dT_1\wedge \cdots \wedge dT_{d-1}\wedge d\pi_\Ma,$$ 
as elements in $\hat{\Omega}_{\mathcal{O}_{K}}^d(\OMa)$; such element exists according to  Section \ref{Terminology and Notation}. Then $$b'\OMa=D(\Ma/K_s).$$
 Moreover, if  $b^*\in \OMa$ is another element 
such that $dT_1\wedge \cdots \wedge dT_{d-1}\wedge w_s
=b^*\, dT_1\wedge \cdots \wedge dT_{d-1}\wedge d\pi_\Ma$, then
$$b'\equiv b^* \pmod{D(\Ma/K)}.$$
\end{lemma}
\begin{proof}
 We first need to  introduce the auxiliary fields
 \[
 \tilde{\Ma}_{(0)}:=\tilde{M}_{(0)}\TT 
 \quad \mathrm{ and }\quad  
 \Ma_{(0)}:=M_{(0)}\TT, 
\] 
where the local field $\tilde{M}_{(0)}$ is the maximal unramified extension of $K_s$  contained in $\Ma$, and  
  the local field $M_{(0)}$  is the maximal unramified extension of $K$ contained in $\Ma$. 
 We denote by $\mathcal{O}_{\tilde{\Ma}_{(0)}}$ and $\mathcal{O}_{\Ma_{(0)}}$
  the ring of integers of $\tilde{\Ma}_{(0)}$ and $\Ma_{(0)}$, respectively. 
  We have that  $\Ma/\tilde{\Ma}_{(0)}$ and $\Ma/\Ma_{(0)}$ 
   are totally ramified extensions,
    and 
   $\tilde{\Ma}_{(0)}/\Ka_s$ and $\Ma_{(0)}/\Ka$ 
   are unramified. 
   
Let    $n=[\Ma:\Ma_{(0)}]$.   From the total ramification of $\Ma/\Ma_{(0)}$ we have that $\OMa=\mathcal{O}_{\Ma_{(0)}}[\pi_\Ma]$.  In particular $\pi_\Ma,\dots, \pi_\Ma^n$ is a basis for $\mu_{\Ma}$, the maximal ideal of $\OMa$, over $\mathcal{O}_{\Ma_{(0)}}$. Thus $e_s=b_1\pi_\Ma+\cdots +b_n\pi_\Ma^n$ for some  polynomial $g(X)=b_1X+\cdots+b_nX^n\in\mathcal{O}_{\Ma_{(0)}}[X] $; recall from  above that $e_s$ is a uniformizer of $K_s$, and consequently of $\Ka_s$, so $e_s\in \mu_{\Ma}$. 
  Since $\tilde{\Ma}_{(0)}\supset \Ma_{(0)}\Ka_s\supset \Ma_{(0)}$, the polynomial $g(X)-e_s$ belongs to $\mathcal{O}_{\tilde{\Ma}_{(0)}}[X]$ as well  and therefore it is divisible by $p(X)$, the minimal polynomial of $\pi_\Ma$ over $\tilde{\Ma}_{(0)}$, i.e., 
  \[
  g(X)-e_s=q(X)p(X),
  \]
   for some $q(X)\in \mathcal{O}_{\tilde{\Ma}_{(0)}}[X]$. Since $\Ma/\tilde{\Ma}_{(0)}$ is totally ramified, the polynomial $p(X)$ is an Eisenstein polynomial and so its constant coefficient is a uniformizer of $\tilde{\Ma}_{(0)}$ (consequently of $\Ka_s$). This implies that the constant coefficient of $q(X)$ is a unit in  $\mathcal{O}_{\tilde{\Ma}_{(0)}}$, and hence $q(\pi_{\Ma})$ is a unit in $\OMa$. 
   
   Differentiating  now $  g(X)-e_s=q(X)p(X)$ with respect of $X$, and evaluating at $X=\pi_\Ma$, we obtain
  \[
g'(\pi_\Ma)=q(\pi_\Ma)p'(\pi_\Ma).  
  \] 

On the other hand, for the finite extension $\Ma/\Ka_s$ we have that  the different $D(\Ma/\Ka_s)$ coincides with $D(\Ma/\tilde{\Ma}_{(0)})$ (since $D(\tilde{\Ma}_{(0)}/\Ka_s)=\OMa$ as $\tilde{\Ma}_{(0)}/\Ka_s$ is unramified), which in turn is equal to $p'(\pi_{\Ma})\OMa$ ( since the the extension $\Ma/\tilde{\Ma}_{(0)}$ is totally ramified and so  $\OMa=\mathcal{O}_{\tilde{\Ma}_{(0)}}[\pi_\Ma]\,$).
Therefore  since $q(\pi_\Ma)$ is a unit we have  $g'(\pi_\Ma)\OMa=p'(\pi_{\Ma})\OMa=D(\Ma/\tilde{\Ma}_{(0)})=D(\Ma/\Ka_s)=D(\Ma/K_s)$. Now, from $g(\pi_\Ma)=e_s$ and the properties of K\"{a}hler  differentials we obtain the identity
\[
de_s=\frac{\partial g}{\partial T_1}(\pi_\Ma)dT_1
+\cdots +
\frac{\partial g}{\partial T_{d-1}}(\pi_\Ma)dT_{d-1}+g'(\pi_{\Ma})d\pi_{\Ma}
\]
in $\hat{\Omega}_{\OK}(\OMa)$; recalling that $g(X)\in\mathcal{O}_{\Ma_{(0)}}[X] $.
By the alternating properties of the wedge product we have 
\begin{equation}\label{new-diff-iden}
dT_1\wedge \cdots \wedge de_s=g'(\pi_\Ma)\,dT_1\wedge\cdots \wedge d\pi_\Ma.
\end{equation}
Therefore letting 
\begin{equation}\label{myst-b}
b'=l_f'(e_s)g'(\pi_\Ma)
\end{equation}
we obtain 
\[
dT_1\wedge \cdots \wedge w_s=b'dT_1\wedge\cdots \wedge d\pi_\Ma\]
in $\hat{\Omega}_{\OK}^d(\OMa)$, and $b'\OMa=g'(\pi_\Ma)\OMa=D(\Ma/K_s)$; recalling that $l_f'(e_s)$ is a unit (since $l_f'(e_s)\equiv 1 \pmod{e_s}$ as $l_f'(X)=1+a_2X+a_3X^2+\cdots $, using in the notation of Section \ref{Description of the results}). This proves the first part of the statement.

For the second part, suppose that $\hat{g}(X)\in \mathcal{O}_{\Ma_{(0)}}[X]$ is another polynomial for which $\hat{g}(\pi_{\Ma})=e_s$. Then $\hat{g}(X)-g(X)$ is divisible by $r(X)$, the minimal polynomial of $\pi_\Ma$ over $\Ma_{(0)}$, i.e., 
$$\hat{g}(X)-g(X)=t(X)r(X)$$
 for some $t(X)\in \mathcal{O}_{\Ma_{(0)}}[X]$. Differentiating this and evaluating  at $X=\pi_{\Ma}$ we obtain 
 $\hat{g}'(\pi_\Ma)-g'(\pi_\Ma)=t(\pi_{\Ma})r'(\pi_{\Ma})$. Noticing that $r'(\pi_{\Ma})\OMa=D(\Ma/K)$ (which holds by an analogous argument as above, namely,   by observing this time that $\OMa=\mathcal{O}_{\Ma_{(0)}}[\pi_\Ma]$ and $D(\Ma/K)=D(\Ma/\Ka)=D(\Ma/\Ma_{(0)})$), we obtain 
 \[
\hat{g}'(\pi_{\Ma})\equiv g'(\pi_{\Ma}) \pmod{D(\Ma/K)} .
 \]
 The result now follows by multiplying both sides of this congruence by the unit $l_f'(e_s)$.
\end{proof}

\vspace{15pt}

We can now continue with the construction of $Q_{\Ma,s}$. Let us put 
\begin{equation}\label{qmsl}
Q_{\Ma,s}(T_1,\dots, T_{d-1},\pi_\Ma)=\frac{T_1\cdots T_{d-1}}{b'\pi^s}, 
\end{equation}
for $b'\in\OMa$ as in Lemma \ref{tech-lemma}, more specifically, $b'=l_f'(e_s)g'(\pi_{\Ma})$ as in Equation \eqref{myst-b}, where $g(X)\in \mathcal{O}_{\Ma_{(0)}}[X]$ is a polynomial such that $g(\pi_{\Ma})=e_s$. By  Proposition 4.1.5 (2) of \cite{Florez} we have that $D(K_s/K)=(\pi^s/\pi_1) \mathcal{O}_{K_s}$, 
then
\begin{equation*}\label{differentes}
\mathfrak{D}(\Ma/K)=\mathfrak{D}(\Ma/K_s)D(K_s/K)=\mathfrak{D}(\Ma/K_s)(\pi^s/\pi_1).
\end{equation*} 
 Moreover, we have $D(\Ma/K)\subset \mathfrak{D}(\Ma/K)$. This implies that $T_1\cdots T_{d-1}/(b'\pi^s)\in P_\Ma$ and $\mathfrak{D}(M/K)Q_{\Ma,s}(T_1,\dots, T_{d-1},\pi_\Ma)\equiv 0 \pmod {(\pi^s/\pi_1)P_\Ma}$. 
Therefore, by \cite{Florez} Proposition 4.2.3, $Q_{\Ma,s}$ 
defines  a $d$-dimensional derivation as follows
 \begin{equation}\label{general-derivation-def}
           Q_{\Ma,s}(a_1, \dots,a_d):= 
             \frac{\Tdc}{b'\pi^s}\,\det 
          \left[ \frac{\partial a_i}{\partial T_j} \right]_{_{1\leq i,j\leq d}}  ,
          \end{equation}
where $a_1,\dots, a_d\in \OMa$.
 Note that the definition of $Q_{\Ma,s}$ is independent of 
 the choice of uniformizer $\pi_\Ma$ of $\Ma$.

%Note that we can also define 
%$$\tilde{Q}_{M,s}:\mathcal{O}_\Ma^2\to 
%\frac{\overline{\Ka}}{(1/\pi_1)\mathcal{O}_{\overline{\Ka}}}$$ 
%by $\tilde{Q}_{\Ma,s}(T,\pi_M)=T/(b'\pi^s)$. 
%Since 
%$(\pi^s/\pi_1)P_\Ma=(1/(\pi_1D(M/K_s)))\OMa
%\subset (1/\pi_1D(M/K_s))\mathcal{O}_{\overline{\Ka}}$ 
%by (\ref{differentes}). 
%Thus,  $P_\Ma/(\pi^s/\pi_1)P_\Ma$ is imbedded in 
%\[
%\frac{\overline{\Ka}}{(1/\pi_1D(M/K_s))\mathcal{O}_{\overline{\Ka}}}.
%\]
%So the reduction of  $\tilde{Q}_{M,s}$ to 
%$\overline{\Ka}/(1/\pi_1D(M/K_s))\mathcal{O}_{\overline{\Ka}}$ 
%coincides with $Q_{M,s}$. Thus we can define a map 
%\[
%Q:\mathcal{O}_{\overline{\Ka}}^2
%\to \frac{\overline{\Ka}}{(1/\pi_1)\mathcal{O}_{\overline{\Ka}}}
%\]
%where $\overline{\Ka}=\overline{K}\{\{T\}\}$, 
%$\mathcal{O}_{\overline{\Ka}}=\mathcal{O}_{\overline{K}}\{\{T\}\}$, 
%such that $Q$ restricted to $\OMa^2$ coincides with $Q_{\Ma,s}$. 

In the following two propositions we will show some properties of the derivations $Q_{\Ma,s}$
that will be needed later in the deduction of the main result.

\begin{prop}\label{extension-Q}
   Let  $\Na$ be a finite field extension of $\Ma$ such that $\Na\supset \Ka_t$, 
   for $t\geq s$, and having $T_1,\dots, T_{d-1},\pi_{\Na}$ as a system of local uniformizers.  
   Suppose that $D(\Na/\Ma)|\pi^{t-s}$, then $(\pi^t/\pi_1)P_\Na$
   is contained in $X:=(\pi^s/\pi_1)P_{\Ma}\mathcal{O}_{\Na}$ and so $Q_{\Na,t}\pmod{ X}$
   is well-defined. Let 
   \[\varpi: P_\Ma/(\pi^s/\pi_1)P_\Ma\to P_{\Na}/X\]
   be the injection induced from $P_\Ma\subset P_\Na$. Then the diagram
    \[
\begin{tikzcd}
 \OMa^d \arrow[r, " Q_{\Ma,s}"] \arrow[d, "Q_{\Na,t}\circ i"'] & \frac{P_\Ma}{(\pi^s/\pi_1)P_\Ma} \arrow[d, "\varpi"] \\
\frac{P_\Na}{(\pi^t/\pi_1)P_\Na }  \arrow[r, "\text{proj}"] & P_\Na/X
\end{tikzcd}\]
is commutative, where $i:\OMa^d\to \ONa^d$ is the inclusion map, and $\text{proj}$ is the projection map.
\end{prop}
\begin{proof}
The Proposition \ref{extension-Q} and its proof were suggested by  V. Kolyvagin.
First, 
\begin{align*}
\frac{\pi^t}{\pi_1}P_\Na=
\frac{\pi^t}{\pi_1}\frac{1}{\pi_1D(\Na/K)}&=
\frac{\pi^t}{\pi_1}\cdot\frac{1}{\pi_1D(\Na/\Ma)D(\Ma/K)}\\
&=\frac{\pi^s}{\pi_1}\cdot\frac{\pi^{t-s}}{D(\Na/\Ma)}.\frac{1}{\pi_1D(\Ma/K)}\subset X
\end{align*}
because $(\pi^{t-s}/D(\Na/\Ma))\subset \ONa$ by our assumption.

By Section \ref{Terminology and Notation} we can choose $a$ and $c$ in $\mathcal{O}_\Na$, and $b\in \mathcal{O}_\Ma$, such that
\begin{align*}
dT_1\wedge \cdots \wedge w_t &=c\,dT_1\wedge \cdots \wedge d\pi_\Na,\\
 dT_1\wedge \cdots \wedge w_s &=b\,dT_1\wedge \cdots \wedge d\pi_\Ma,\  \mathrm{and} \\
  dT_1\wedge \cdots \wedge d\pi_\Ma &=a\,dT_1\wedge \cdots \wedge d\pi_\Na.
\end{align*}
Since $\pi^{t-s}w_t=w_s$ by Proposition  \ref{generators}, then
\[
dT_1\wedge \cdots \wedge w_s=(\pi^{t-s}c)\,dT_1\wedge \cdots \wedge \pi_{\Na}=b\, dT_1\wedge \cdots \wedge d\pi_\Ma=(ab)\,dT_1\wedge \cdots \wedge d\pi_\Na.
\]

By virtue of Lemma \ref{tech-lemma} the elements $\pi^{t-s}c$ and $ab$  satisfy the congruence 
$$\pi^{t-s}c\equiv ba \pmod{D(\Na/K)}.$$
Dividing this congruence by $\pi^t\,c\,b$ and taking into 
account, again from Lemma \ref{tech-lemma}, that $c\,\mathcal{O}_\Na=D(\Na/K_t)$ and $b\,\mathcal{O}_\Ma=D(\Ma/K_s)$,
we obtain
\[
\frac{1}{\pi^sb}\equiv \frac{a}{\pi^tc} \pmod{Z},
\]
where 
\begin{align*}
Z=\frac{D(\Na/K)}{\pi^t\,D(\Na/K_t)\,D(\Ma/K_s)}
 =\frac{\ONa}{\pi_1\,D(\Ma/K_s)}
 =\frac{\pi^s}{\pi_1}\frac{\ONa}{\pi_1\,D(\Ma/K)}
 = X.
\end{align*}
Here we are using the fact that  $D(\Na/K)=D(\Na/K_t)D(K_t/K)$ and $D(K_t/K)=(\pi^t/\pi_1)\mathcal{O}_{K_t}$; similarly 
$D(\Ma/K)=(\pi^s/\pi_1)\,D(\Ma/K_s)$.
Thus 
\begin{align*}
Q_{\Na,t}(T_1,\dots, T_{d-1}, \pi_\Ma)
&=T_1\cdots T_{d-1}\frac{a}{\pi^tc}=T_1\cdots T_{d-1}\frac{1}{\pi^sb}\pmod{X}\\
&=\varpi\big(\,Q_{\Ma,s}(T_1,\dots, T_{d-1},\pi_\Ma)\,\big).
\end{align*}
This implies the corresponding equality for arbitrary $y\in (\OMa)^d$, i.e., 
\[
Q_{\Na,t}(y)=\varpi(Q_{\Ma,s}(y)) \pmod{X},
\]
 since the 
right-hand and left-hand mappings are multidimensional derivations of $(\OMa)^d$ over $\OK$ and thus
they are determined by their value at $(T_1,\dots,T_{d-1},\pi_\Ma)$ (cf. Proposition 4.2.3 of \cite{Florez}). 
\end{proof}

\begin{prop}\label{QGalois}
$Q_{\Ma,s}^g=\tau(g^{-1})Q_{\Ma,s}$ for all $g\in \normalfont \text{ Gal}(\overline{\Ka}/\Ka)$. Here $Q_{\Ma,s}^g$ 
is the $d$-dimensional derivation defined by 
$Q_{\Ma,s}^g(a_1,\dots, a_d):=[Q_{\Ma,s}(a_1^{g^{-1}},\dots,a_d^{g^{-1}})]^g$.
\end{prop}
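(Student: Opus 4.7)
The plan is to reduce to verifying the desired identity at the tuple $(T_1,\ldots,T_{d-1},\pi_\Ma)$ via uniqueness of $d$-dimensional $\OK$-derivations, and then to run a short differential-form computation using Proposition \ref{galois-action} together with the fact that each $T_i \in \Ka$ is fixed by $g$.

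First I would verify that both $Q_{\Ma,s}^g$ and $\tau(g^{-1})Q_{\Ma,s}$ are $d$-dimensional $\OK$-derivations on $\OMa^d$ valued in $P_\Ma/(\pi^s/\pi_1)P_\Ma$. For $\tau(g^{-1})Q_{\Ma,s}$ this is automatic because $\tau(g^{-1}) \in \OK$. For $Q_{\Ma,s}^g$ it follows from the defining formula together with the observation that $\Ma$, $\OMa$, $P_\Ma$, and the submodule $(\pi^s/\pi_1)P_\Ma$ are all $g$-stable (the different $D(\Ma/K)$ being $g$-invariant as an ideal). By Proposition 4.2.3 of \cite{Florez} it then suffices to check equality on the distinguished tuple $(T_1,\ldots,T_{d-1},\pi_\Ma)$.

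To carry out the computation, I would repackage (\ref{qmsl})--(\ref{general-derivation-def}) as the differential-form identity
\[
Q_{\Ma,s}(a_1,\ldots,a_d)\; dT_1\wedge\cdots\wedge dT_{d-1}\wedge w_s \;=\; \frac{T_1\cdots T_{d-1}}{\pi^s}\; da_1\wedge\cdots\wedge da_d
\]
inside $\hat{\Omega}^{\,d}_{\mathcal{O}_{K_s}}(\OMa)$, which carries a natural $g$-action since $\Ka_s/\Ka$ is Galois. Applying $g$ to both sides, using $T_i^g = T_i$, $(da)^g = d(a^g)$, and $w_s^g = \tau(g) w_s$ from Proposition \ref{galois-action}, yields the transformation rule
\[
\tau(g)\,[Q_{\Ma,s}(a_1,\ldots,a_d)]^g = Q_{\Ma,s}(a_1^g,\ldots,a_d^g);
\]
replacing $g$ by $g^{-1}$ and using that $\tau(g^{-1})\in \OK$ is Galois-invariant gives exactly the claim.

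The main obstacle is the bookkeeping modulo $(\pi^s/\pi_1)P_\Ma$: one must verify that the differential-form identity above is an equality in the appropriate quotient (via the isomorphism \eqref{Khal}) and that the induced $g$-action on that quotient is compatible with the $g$-action on $\hat{\Omega}^{\,d}_{\mathcal{O}_{K_s}}(\OMa)$. Once this compatibility is cleanly set up, the remainder is a direct application of Proposition \ref{galois-action} and the multilinearity of the derivation.
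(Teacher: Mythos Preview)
Your proposal is correct and follows essentially the same approach as the paper: both reduce to the distinguished tuple $(T_1,\dots,T_{d-1},\pi_\Ma)$ via Proposition~4.2.3 of \cite{Florez}, and both hinge on Proposition~\ref{galois-action} ($w_s^g=\tau(g)w_s$) together with the Galois-invariance of the $T_i$. The only cosmetic difference is packaging: the paper works directly with the element $b'$ satisfying $dT_1\wedge\cdots\wedge w_s=b'\,dT_1\wedge\cdots\wedge d\pi_\Ma$, applies $g$, and then invokes the uniformizer-independence of $Q_{\Ma,s}$ to identify $Q_{\Ma,s}(T_1,\dots,T_{d-1},\pi_\Ma^g)$; your differential-form identity is exactly this computation rewritten so that the uniformizer-independence is absorbed into the formulation.
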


\begin{proof}
Let $g\in \text{ Gal}(\overline{\Ka}/\Ka)$. Notice that it is enough to check that  
$$Q_{\Ma,s}^g(\Td,\pi_\Ma)=\tau(g^{-1})\,Q_{\Ma,s}(\Td,\pi_\Ma),$$ by Equation (41) of 
Proposition 4.2.3 in \cite{Florez}. 

Let $b'\in \mathcal{O}_\Ma$ such that $dT_1\wedge \cdots \wedge w_s=b'\, dT_1\wedge \cdots \wedge d\pi_\Ma$. Then 
\begin{align*}
dT_1\wedge \cdots \wedge w_s^g
&=dT_1^g\wedge \cdots \wedge w_s^g \hspace{40pt} (\text{Since $T_i^g=T_i$})\\
&=(dT_1\wedge \cdots \wedge w_s)^g \hspace{30pt}(\text{By Equation \eqref{action-diff} })\\
&=(b'\, dT_1\wedge \cdots \wedge d\pi_{\Ma})^{\,g}\\
&=(b')^g\, dT_1\wedge \cdots \wedge d\pi_{\Ma}^{\,g}  \hspace{13pt} (\text{By Equation \eqref{action-diff} }).
\end{align*}
By Proposition \ref{galois-action} this implies that $dT_1\wedge \cdots \wedge w_s=\tau(g^{-1})(b')^g\, dT_1\wedge \cdots \wedge d\pi_{\Ma}^g$.
Since $\pi_{\Ma}^g$ is also a uniformizer for $\Ma$ and  the definition of $Q_{\Ma,s}$
is independent of the uniformizer, then 
\[
Q_{\Ma,s}(\Td,\pi_{\Ma}^g)=\frac{T_1\cdots T_{d-1}}{\tau(g^{-1})\ (b')^g\ \pi^s}.
\]
But
\[
T_1\cdots T_{d-1} \frac{1}{\tau(g^{-1})(b')^g\pi^s}=T_1\cdots T_{d-1}\tau(g)\left( \frac{1}{b'\pi^s}\right) ^g,
\]
and this last expression is equal to $\tau(g)\,  Q_{M,s}(\Td,\pi_\Ma)^{\,g}$ by Equation (\ref{qmsl}).

\end{proof}

\subsection{The canonical logarithmic derivatives $QL_{\Ma,s}$}\label{The canonical logarithmic derivatives}
Let $\Ma$ be a finite extension of $\Ka_s$ with   $T_1,\dots,T_{d-1}$ and $\pi_\Ma$ as a system of local uniformizers. 

\begin{defin}\label{defilQL} We now construct what Kolyvagin  calls the logarithmic derivative associated to $Q_{\Ma,s}$ (cf. \cite{koly} page 333). This is the map
\[
QL_{\Ma,s}:K_d(\Ma)\longrightarrow \frac{\frac{1}{\pi_\Ma}P_\Ma}{\frac{\pi^s}{\pi_1\pi_\Ma}P_\Ma}
\]
defined by 
\begin{equation}\label{QL}
\left\{
\begin{aligned}
QL_{\Ma,s}(u_1,\dots,u_{d-1},\pi_\Ma)&=\frac{Q_{\Ma,s}(u_1,\dots, u_{d-1},\pi_\Ma)}{u_1\cdots u_{d-1}\pi_\Ma} \pmod{\frac{\pi^s}{\pi_1\pi_\Ma}P_\Ma},\\
QL_{\Ma,s}(u_1,\dots,u_d)&=\frac{Q_{\Ma,s}(u_1,\dots,u_d)}{u_1\cdots u_d} \pmod{\frac{\pi^s}{\pi_1}P_\Ma},\\
QL_{\Ma,s}(u_1,\dots,u_d\,\pi_\Ma^{\,k})&=k\, QL_{\Ma,s}(u_1,\dots,\pi_\Ma)+QL_{\Ma,s}(u_1,\dots,u_d),\ k\in\Z,\\
QL_{\Ma,s}(a_1,\dots, a_d)&=0, \ \text{whenever $a_i= a_j$ for $i\neq j$ and $a_1,\dots, a_d\in \Ma^*$}. 
\end{aligned}
\right.
\end{equation}
where $u_1,\dots, u_d$ are  in 
$\mathcal{O}_\Ma^*=\{x\in \mathcal{O}_\Ma:v_\Ma(x)=0\}$. 
\end{defin}
Notice that the forth
property says that $QL_{\Ma,s}$ is alternating, in particular
it is skew-symmetric, i.e.,
\[
QL_{\Ma,s}(a_1,\dots, a_i,\dots, a_j,\dots,a_d)=-QL_{\Ma,s}(a_1,\dots, a_j,\dots, a_i,\dots,a_d),
\]
whenever $i\neq j$.

From Equation \eqref{general-derivation-def}, applied to $\alpha=\{a_1,\dots, a_d\}\in K_d(\Ma)$ with $a_1,\dots, a_d\in \OMa$, the map $QL_{\Ma,s}$ can be put in explicit form as
\[
QL_{\Ma,s}(\alpha)=\frac{T_1\cdots T_{d-1}}{\pi^sb'}\cdot\frac{\det 
          \left[ \frac{\partial a_i}{\partial T_j} \right]_{_{1\leq i,j\leq d}} }{a_1\cdots a_d},
\]
where $b'=l_f'(e_s)g'(\pi_{\Ma})$ is as in Equation \eqref{myst-b}, where $g(X)\in \mathcal{O}_{\Ma_{(0)}}[X]$ is a polynomial such that $g(\pi_{\Ma})=e_s$. This helps explain the name \textit{logarithmic derivative}.

Notice also from Equation \eqref{QL} that $QL_{\Ma,s}$ is defined $\text{mod}\ \frac{\pi^s}{\pi_1}P_\Ma$
for elements $\alpha=\{u_1,\dots, u_d\}\in K_d(\Ma)$ with $u_1,\dots, u_d\in \OMa^*$. More specifically,
\begin{equation}\label{units-map-q}
QL_{\Ma,s}(u_1,\dots,u_d)=\frac{Q_{\Ma,s}(u_1,\dots,u_d)}{u_1\cdots u_d} \in P_{\Ma}/\left( \frac{\pi^s}{\pi_1}P_\Ma\right),
\end{equation}
which is a  fact that will be used later in Section \ref{The lift of $QL_n$}.

To state some of the properties of the map $QL_{\Ma,s}$ we need to introduce the following notation. 
Let us fix a $d$-dimensional local field $\La\supset K_n$. Let $\gamma_m$ be a uniformizer for 
$\La_m:=\La(\kappa_m)$, $m\geq 1$. From now on we will be using the following notation 
\begin{equation}\label{notational}
\begin{cases}
P_s=P_{\La_s} \quad (s\geq 1),\\ 
\ Q_s=Q_{\La_s,s},\ QL_s= QL_{\La_s,s}, \\
N_{s}=N_{\La_s/\La},\ N_{t/s}=N_{\La_t/\La_s} \quad  (t\geq s),\\
\text{Tr}_{s}=\text{Tr}_{\La_s/\La},\,\text{Tr}_{t/s}=\text{Tr}_{\La_t/\La_s}. 
\end{cases}
\end{equation}

Notice that
\begin{equation}\label{TrazaP_t}
\text{Tr}_{t/s}(P_t)\subset P_s \quad
\mathrm{and} \quad
\text{Tr}_{t/s}\big((1/\gamma_t)P_t\big)\subset (1/\gamma_s)P_s.
\end{equation}
Indeed, from $D(\La_t/K)=D(\La_t/\La_s)D(\La_s/K)$ we have that $P_t=(1/D(\La_t/\La_s))P_s$, thus
$$\text{Tr}_{t/s}\big(\,P_t\,\big)=\text{Tr}_{t/s}\big(\,(1/D(\La_t/\La_s))P_s\,\big)
=P_s\text{Tr}_{t/s}\big(\,1/D(\La_t/\La_s)\,\big)\subset P_s,$$
and $\text{Tr}_{t/s}(\big(\,\gamma_s/\gamma_t)P_t\,\big)\subset \text{Tr}_{t/s}(P_t)\subset P_s$.

Let $r'$ be smallest positive integer such that 
$\La\cap K_{\pi, \infty}\subset  K_{r'}$. The existence of such $r'$ can be guaranteed from the following argument. The field $\La\cap K_{\pi, \infty}$ is a finite extension of $K$ that is contained in $K_{\pi,\infty}$. Since $K$ is separable there exists an element $\alpha\in K_{\pi,\infty}$ such that 
$\La\cap K_{\pi, \infty}=K(\alpha)$. But since $K_{\pi,\infty}=\cup_{m\geq 1}K_{m}$, then $\alpha \in K_{m}$ for some $m\geq 1$. Thus $\La\cap K_{\pi, \infty}=K(\alpha)\subset K_m$ and therefore we can take  the smallest such $m$.

\begin{remark}
Notice that in general, however, $\La\cap K_{\pi, \infty}$ is not necessarily equal to $K_{r'}$. This can happen, for example, if the degree of inertia $f(K/\Qp)$  of $K/\Qp$ is greater than one. Indeed,
since $[K_{n+1}/K_n]=q$ for $q=|k_K|=p^{f(K/\Qp)}$ (cf. introduction of Section \ref{Canonical derivations and logarithmic derivatives} ), then
if $f(K/\Qp)\geq 2$ there exists an intermediate field $L$ between $K_{n+1}$ and $K_n$ such that $[L/K_n]=p$. This guarantees that $K_n\subsetneq L\subsetneq K_{n+1}$. Now let $\La=L\TT$. Then $\La\cap K_{\pi,\infty}=L$ and so $K_n\subsetneq \La\cap K_{\pi,\infty} \subsetneq K_{n+1}$.
\end{remark}

For $r'$ as above we have the identity
  \begin{equation}\label{Koly-f}
 \La_s\cap K_{\pi,\infty}=K_s \quad (s\geq r').
 \end{equation}
 \begin{proof}
 Indeed,
we will deduce this identity from the diagrams
 \[
 \begin{tikzpicture}[node distance = 1cm, auto]
      \node (Q) {$\La\cap K_{\pi,\infty}$};
      \node (E) [above of=Q, left of=Q] {$K_s$};
      \node (F) [above of=Q, right of=Q] {$\La$};
      \node (K) [above of=Q, node distance = 2cm] {$\La_s$};
      \draw[-] (Q) to node {} (E);
      \draw[-] (Q) to node {} (F);
      \draw[-] (E) to node {} (K);
      \draw[-] (F) to node {} (K);
      \end{tikzpicture}
      \hspace{30pt}
       \begin{tikzpicture}[node distance = 1cm, auto]
      \node (Q) {$\La\cap K_{\pi,\infty}$};
      \node (E) [above of=Q, left of=Q] {$\La_s\cap K_{\pi,\infty}$};
      \node (F) [above of=Q, right of=Q] {$\La$};
      \node (K) [above of=Q, node distance = 2cm] {$\La_s$};
      \draw[-] (Q) to node {} (E);
      \draw[-] (Q) to node {} (F);
      \draw[-] (E) to node {} (K);
      \draw[-] (F) to node {} (K);
      \end{tikzpicture}
      \]
       We begin by observing that from  $\kappa_s\subset \La_s$ and $\kappa_s\subset K_{\pi,\infty}$ we obtain
 \begin{equation}\label{finalle}
 K_s\subset \La_s \cap K_{\pi,\infty}.
 \end{equation}
      This  implies  $\La\cap K_s\subset \La\cap(\La_s\cap K_{\pi,\infty})\subset \La\cap K_{\pi,\infty}$. Since also $\La\cap K_{\pi,\infty}\subset K_{r'}\subset K_s$, by the definition of $r'$, then $\La\cap K_{\pi,\infty}\subset \La \cap K_s$. Thus
      \begin{equation}
      \label{ecua--1}
\La\cap K_s=\La\cap K_{\pi,\infty}
\quad     \mathrm{and}\quad
\La\cap (\La_s\cap K_{\pi,\infty})=\La\cap K_{\pi,\infty}.  
            \end{equation}
      On the other hand, since $\kappa_s\subset K_s$ and $\kappa_s\subset \La_s\cap K_{\pi,\infty}$ then
      \begin{equation}
      \label{ecua--2}
\La\cdot K_s=\La_s 
\quad     \mathrm{and}\quad
\La\cdot (\La_s\cap K_{\pi,\infty})=\La_s .     
            \end{equation}
      Therefore \eqref{ecua--1} and \eqref{ecua--2}, bearing in mind the above diagrams, imply that  the maps
      \[
      \mathrm{Gal}(\La_s/\La)\longrightarrow \mathrm{Gal}(K_s\,/\,\La\cap K_{\pi,\infty}) 
      \]
      and
        \[
      \mathrm{Gal}(\La_s/\La)\longrightarrow \mathrm{Gal}(\La_s\cap K_{\pi,\infty}\,/\,\La\cap K_{\pi,\infty})
      \]
     are isomorphisms (cf. \cite{Lang} VI \S 1 Theorem 1.12). In particular
     \[
[\La_s:\La]
=[K_s\,:\,\La\cap K_{\pi,\infty}]
= [\La_s\cap K_{\pi,\infty}\,:\,\La\cap K_{\pi,\infty}].    
     \]
     This combined with \eqref{finalle} yields \eqref{Koly-f}.
 \end{proof}

The following proposition inform us about the compatibility of the maps $QL_s$ and $QL_t$ for $t\geq s\geq r'$.
\begin{prop}\label{levelQL}
For any $t\geq s \geq r'$ the following diagram commutes
\[
\begin{CD}
 K_d(\La_t) @>{ QL_{t} }>> \frac{\frac{1}{\gamma_t}P_t}{\frac{\pi^t}{\pi_1\gamma_t}P_t}   \\
@V{N_{t/s}}VV @VV{\normalfont \text{Tr}_{t/s}}V \\
K_d(\La_s) @>>{QL_s}> \frac{\frac{1}{\gamma_s}P_s}{\frac{\pi^s}{\pi_1\gamma_s}P_s}.
\end{CD}
\]
\end{prop}

\begin{proof}[Proof of Proposition \ref{levelQL}]
The proof follows the same ideas as  in \cite{koly} Proposition 7.13.    It will be enough to consider the case $t=s+1$. Before we continue with the proof we need the following lemma.
\begin{lemma}\label{important}
Let $\Na/\Ma$ be a finite extension of $d$-dimensional local fields such that 
$\La_{s}\subset \Ma \subset \Na\subset  \La_{t}$. Then
\[
QL_t\big(\,N_{\Ma/\Na}(\alpha)\,\big)=
\left(\sum \tau_t(g_i)g_i \right)QL_t(\alpha) \quad (\,\forall \alpha\in K_d(\Na)\,),
\]
where the sum runs through any  collection of  elements
 $g_i\in \normalfont \text{Gal}(\La_t/\La_s)$ that form a full set of
 representatives of the group $\normalfont \text{Gal}(\Na/\Ma)$  $\normalfont (\, \simeq \text{Gal}(\La_t/\Ma)/\text{Gal}(\La_t/\Na)\,)$. In particular,
\begin{equation}\label{crux-prop}
QL_t\big(\,N_{t/s}(\alpha)\,\big)=
\left(\sum_{g\in \normalfont \text{Gal}(\La_t/\La_s)} \tau_t(g)g \right)QL_t(\alpha) 
\quad (\, \forall \alpha\in K_d(\La_t)\,).
\end{equation}
\end{lemma}
\begin{proof}
Since $[\La_{t}:\La_s]$ is a power of the prime $p$, then $[\Na:\Ma]=p^m$ for some $m\geq 1$. We will do the proof by induction on $m$. For this reason assume first that $[\Na:\Ma]=p$. In this case, $K_d(\Na)$ is generated by all the elements of the form 
 $\alpha=\{a_1,\dots, a_d\}$ with $a_1\in \Na^*$ and $a_2,\dots, a_d\in \Ma^*$ (cf. \cite{Zhukov} Chapter 9, Section 2.5, Corollary 1 and Corollary 2, or \cite{Morrow} Lemma 2.1 ). Thus it is enough to check the identity for these elements only.  Therefore let $\alpha=\{a_1,\dots, a_d\}$ be of said form and let $g_i$ be as in the statement of the lemma. Under these conditions we easily verify the identity
 \begin{align*}
QL_t(N_{\Na/\Ma}\{a_1,\dots a_d\})
&=QL_t(\{N_{\Na/\Ma}(a_1),a_2,\dots, a_d\}) \quad \text{(by \cite{Florez} Proposition 2.1.2 (4))}\\ 
&=QL_t(\{\prod a_1^{g_i},\dots, a_d\})\\
&=\sum QL_t(\{ a_1^{g_i},\dots, a_d\})\\
&=\sum QL_t(\{ a_1^{g_i},\dots, a_d^{g_i}\})\quad \text{($a_j^{g_i}=a_j$, $2\leq j \leq d$)}\\
&=(\,\sum \tau_t(g_i)g_i\,)\,QL_t(\{a_1,\dots, a_d\}) \quad \text{(by Proposition \ref{QGalois})},
 \end{align*}
 where $\Sigma$ runs through the elements $g_i$.
 
 Suppose now that $[\Na:\Ma]=p^m$, for $m\geq 2$, and assume that the identity is true for any extension of degree $p^w$ with $w<m$. Let $\mathcal{E}$ be a $d$-dimensional local field such that 
 $\Ma\subset \mathcal{E}\subset \Na$, 
 $[\Na:\mathcal{E}]=p^{m-1}$ and $[\mathcal{E}:\Ma]=p$. Let $g_i$ and  $h_j$ be elements of $\text{Gal}(\La_t/\La_s)$ such that $g_i$, $h_j$ and $g_i\cdot h_j$, respectively, run through a system of representatives of the groups $\text{Gal}(\Na/\mathcal{E})$, $\text{Gal}(\mathcal{E}/\Ma)$ and 
 $\text{Gal}(\Na/\Ma)$, respectively. Then for any $\alpha\in K_d(\Na)$ we have
 \begin{align*}
 QL_t(N_{\Na/\Ma}(\alpha))
 &= QL_t\big(\,N_{\mathcal{E}/\Ma}\big(N_{\Na/\mathcal{E}}(\alpha)\big)\,\big)\\
 &=(\sum \tau_t(h_j)h_j)QL_t(N_{\Na/\mathcal{E}}(\alpha)) \quad \text{(By induction hypothesis)}\\
&=(\sum \tau_t(h_j)h_j) (\sum \tau_t(g_i)g_i)) QL_t(\alpha)\quad \text{(By induction hypothesis)} \\
&=(\sum \tau_t(g_ih_j)g_ih_j) QL_t(\alpha),
 \end{align*}
 which verifies the identity in this general case as well.
\end{proof}

    We continue with the proof of Lemma \ref{levelQL}. From Equation \eqref{crux-prop} it follows that
    for all $\alpha\in K_d(\La_t)$
          \begin{multline}\label{q-trace-normal}
          QL_t\left(N_{t/s}(\alpha)\right)=\left( \sum \, \tau_t(g)g\, 
                                               \right)QL_t(\alpha)=\\
                                          =\left( \sum g\right)QL_t(\alpha)
                                            + \left( \sum \big(\tau_t(g)-1\big)g \right) QL_t(\alpha),
     \end{multline}
     where each $\sum$ is taken over all $g\in \text{Gal}(\La_t/\La_s)$.
    By Proposition \ref{imagedelta} below and by Equation (\ref{Koly-f}), we see that $\sum (\tau_t(g)-1)g$ 
     takes 
     \[P_t=\frac{1}{D(\La_t/\La_s)}P_s=\frac{\pi}{D(\La_t/\La_s)}\left( \frac{1}{\pi}P_s\right) 
     \]
      to
     \[
     \left( \frac{\pi^{t}}{\pi_1}\OZ_{\La_t}\right) \ \left( \frac{1}{\pi}P_s\right) =\frac{\pi^s}{\pi_1}P_s\OZ_{\La_t}.
     \]
     Then
     \begin{align*}
     QL_s\big(\,N_{t/s}(\alpha) \,\big)&=QL_t\big(\,N_{t/s}(\alpha)\,\big) \pmod{ \frac{\pi^s}{\pi_1\gamma_s}P_s}\\
                       &=\text{Tr}_{t/s}\big(\,QL_t( \alpha)\,\big),     
     \end{align*}
          where the first equality follows from Proposition \ref{extension-Q} and the second from Equation \eqref{q-trace-normal}.
\end{proof}

\begin{prop}\label{imagedelta}
      Let $\Ma\supset K$ be a $d$-dimensional local field such that 
      $\Ma\cap K_{\pi,\infty}=K_s$ and let $\Na=\Ma_{s+1}$. 
       Then
            \[
   \normalfont   \tau_{s+1}(\text{Gal}(\mathcal{N}/\Ma))=
      \frac{1+\pi^sC}{1+\pi^{s+1}C} \subset (\,C/\pi^{s+1}C\,)^*,
      \]
                where $C=\OK$, and the element
            \[
   \normalfont   \sum_{g\in \text{Gal}(\mathcal{N}/\Ma)}(\tau_{s+1}(g)-1)g 
      \]
           takes $(\pi/D(\Na/\Ma))\mathcal{O}_{\mathcal{N}}$ to $(\pi^{s+1}/\pi_1)\mathcal{O}_{\mathcal{N}}$. Also $D(\Na/\Ma)|\pi$.
      
\end{prop}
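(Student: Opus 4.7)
The plan is to prove the three assertions in order; the first and third are relatively direct, and the main effort goes into the second.

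For the Galois-group identification, the isomorphism $\tau_{s+1}$ of \eqref{Lubin-Tate-isom} sends $G(\Ka_{s+1}/\Ka_s)$ onto the kernel of the reduction $(C/\pi^{s+1}C)^* \twoheadrightarrow (C/\pi^sC)^*$, which is precisely $(1+\pi^sC)/(1+\pi^{s+1}C)$. So I need $G(\Na/\Ma)\simeq G(\Ka_{s+1}/\Ka_s)$, i.e., $\Ma\cap\Ka_{s+1}=\Ka_s$. Because $\Ka_s\cap K_{s+1}=K_s$, the restriction $G(\Ka_{s+1}/\Ka_s)\to G(K_{s+1}/K_s)$ is an isomorphism, so every intermediate field of $\Ka_{s+1}/\Ka_s$ has the form $\Ka_s\cdot L$ with $K_s\subseteq L\subseteq K_{s+1}$; the hypothesis $\Ma\cap K_\infty=K_s$ then forces $L=K_s$. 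For $D(\Na/\Ma)\mid\pi$, I use the compositum structure $\Na=\Ma\cdot K_{s+1}$ over $K_s$: the different $D(\Na/\Ma)$ divides $D(K_{s+1}/K_s)\ONa=(\pi)$, since $D(K_{s+1}/K_s)=D(K_{s+1}/K)/D(K_s/K)=(\pi)$ by \cite{Florez} Proposition 4.1.5(2).

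For the main image bound, I write $\tau_{s+1}(g)=1+\pi^s\alpha_g$ with $\alpha_g\in C$, so that $g\mapsto\alpha_g\bmod\pi$ becomes an additive isomorphism $G(\Na/\Ma)\xrightarrow{\sim}C/\pi C$. In particular $\sum_g\alpha_g\equiv 0\pmod{\pi}$, and the binomial expansion of $\tau_{s+1}(g)^m$ together with $s\geq 1$ delivers the crucial congruence
\[
\sum_{g\in G(\Na/\Ma)}\tau_{s+1}(g)^m\;\equiv\; q\pmod{\pi^{s+1}}\qquad\text{for all }m\geq 0,
\]
whence $\sum_g(\tau_{s+1}(g)-1)\tau_{s+1}(g)^m\equiv 0\pmod{\pi^{s+1}}$. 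Setting $\beta=e_{s+1}$, I use $g(\beta)=[\tau_{s+1}(g)]_f(\beta)$ together with the factorization $[\tau]_f(X)-\tau X=(\tau-1)R(\tau,X)$ where $R(\tau,X)\in C[\tau][[X]]$ has order $\geq 2$ in $X$ (since $[1]_f(X)=X$). Expanding $g(\beta^k)=\tau(g)^k\beta^k+(\tau(g)-1)\cdot(\text{order }\geq k+1\text{ in }\beta)$ and multiplying by $\tau(g)-1$, I obtain
\[
\sum_g(\tau(g)-1)g(\beta^k)\;=\;\beta^k\sum_g\bigl(\tau(g)^{k+1}-\tau(g)^k\bigr)\;+\;O\bigl(\pi^{2s}\beta^{k+1}\ONa\bigr),
\]
in which the leading piece lies in $\pi^{s+1}\beta^k C$ by the congruence and the tail in $\pi^{s+1}\ONa$ since $2s\geq s+1$; both are contained in $(\pi^{s+1}/\pi_1)\ONa$. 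Extending by $\OMa$-linearity along a $\beta$-expansion of any $x\in(\pi/D(\Na/\Ma))\ONa$ completes the argument.

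The \textbf{main obstacle} is the last step for general $\Ma$: when $\Ma$ is wildly ramified over $\Ka_s$, the element $\beta=e_{s+1}$ is not a uniformizer of $\Na$ and $\OMa[\beta]$ may sit strictly inside $\ONa$, with defect essentially measured by the ratio $\pi/D(\Na/\Ma)$ of the discriminant of the minimal polynomial of $\beta$ to the actual different. The hypothesis $x\in(\pi/D(\Na/\Ma))\ONa$ is calibrated to absorb exactly this defect, so the computational heart is the congruence $\sum_g\tau_{s+1}(g)^m\equiv q\pmod{\pi^{s+1}}$ combined with the formal-group vanishing $[\tau]_f(X)-\tau X\in(\tau-1)C[\tau][[X]]$, which together produce the cancellation required to reach $(\pi^{s+1}/\pi_1)\ONa$.
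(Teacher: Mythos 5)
Your treatment of the first assertion (via $\Na=\Ma\cdot K_{s+1}$ and $\Ma\cap K_{s+1}=K_s$) and of $D(\Na/\Ma)\mid D(K_{s+1}/K_s)\ONa=\pi\ONa$ is fine, as is the power-sum congruence $\sum_g\tau_{s+1}(g)^m\equiv q\pmod{\pi^{s+1}}$. The gap is in the formal-group step: the factorization $[\tau]_f(X)-\tau X=(\tau-1)R(\tau,X)$ with $R(\tau,X)\in C[\tau][[X]]$ is false. The coefficients of $[\tau]_f$ are not polynomial functions of $\tau$ over $C$, so vanishing at $\tau=1$ does not yield divisibility by $\tau-1$ in $C$. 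Concretely, for $\tau=1+\pi^s$ one has $[\tau]_f(X)=F\big(X,f^{(s)}(X)\big)$, whose coefficient of $X^{q^s}$ is a unit (e.g.\ for $f(X)=(1+X)^p-1$ it is $\binom{1+p^s}{p^s}=1+p^s$), hence not divisible by $\tau-1=\pi^s$. The quantitative collapse is visible already at $k=1$: your expansion forces $v\big(g(e_{s+1})-e_{s+1}\big)\geq s\,v(\pi)$, whereas in fact $g(e_{s+1})=F(e_{s+1},v_g)$ with $v_g=[\alpha_g]_f(e_1)\in\kappa_{f,1}$, so $v\big(g(e_{s+1})-e_{s+1}\big)=v(\pi_1)=v(\pi)/(q-1)$ for $g\neq 1$ --- which is what makes $D(\Na/\Ma)=\big(\prod_{g\neq 1}(g(e_{s+1})-e_{s+1})\big)$ divide $\pi$ in the first place. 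Relatedly, your argument would place the output in $\pi^{s+1}\ONa$, strictly smaller than the stated target $(\pi^{s+1}/\pi_1)\ONa$; the $1/\pi_1$ is genuinely needed, which is another sign the bookkeeping cannot be right.

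The repair is to expand in powers of the torsion point rather than in powers of $\tau-1$: writing $\tau_{s+1}(g)=1+\pi^s\alpha_g$, one has $g(e_{s+1})=F(e_{s+1},v_g)$ with $v_g=[\alpha_g]_f(e_1)=\alpha_g e_1+O(e_1^2)$, and the sums $\sum_g\alpha_g v_g^{\,j}$ are controlled by the residue-field power sums $\sum_{a\in k_K}a^{m}$, which vanish for $0<m<q-1$; the first surviving term, at $m=q-1$, has valuation $(q-2)v(\pi_1)=v(\pi/\pi_1)$ and is exactly what produces the ideal $(\pi^{s+1}/\pi_1)\ONa$. Your reduction of a general $x\in(\pi/D(\Na/\Ma))\ONa$ to the powers $e_{s+1}^{\,k}$ via the conductor--different formula is sound and is the right way to pass from the one-dimensional computation to $\ONa$. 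The paper itself does none of this explicitly: it observes $G(\Na/\Ma)\cong G(K_{s+1}/K_s)$ and invokes Kolyvagin's Proposition 5.12 and its proof, which carries out precisely the torsion-point expansion sketched above.
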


\begin{proof}

This follows immediately from the fact 
that $\text{Gal}(\Na/\Ma)\cong \text{Gal}(K_{s+1}/K_{s})$, $D(\Na/\Ma)=D(K_{s+1}/K_s)\ONa$, 
and Proposition 7.12 in \cite{koly} and its proof.

\end{proof}

\section{Generalized Artin-Hasse formulas }\label{Computations of the invariants}
In this section we will give a version of Artin-Hasse formula for the Kummer pairing over higher local fields. In the next section, we will use these results to deduce the main formulas.

We will assume that  $p\neq 2$. Let $\varrho$ be the ramification of index of $K/\Qp$.
Let $(k,t)$ 
be a pair of integers for which there exist a positive integer $m$
such that $t-k-1\geq m\varrho \geq k$, 
in other words: $(k,t)$ is an \textit{admissible pair} as in Definition 2.3.1 of \cite{Florez}.
Also, recall that $\Ka_t=K_t\TT$ and  let $ \Ma$ be a finite extension of $\Ka_t$  having   $T_1,\dots,T_{d-1}$ and $\pi_\Ma$ as a system of local uniformizers.  We define
\[
R_{\Ma,1}:=
\big\{x\in \Ma: v_{\Ma}(x)\geq -v_{\Ma}(D(M/K))-\floor*{\frac{v_{\Ma}(p)}{p-1}}-1\big\}
\]
and
\[
V_{\Ma,1}:=\{u\in \OMa:v_{\Ma}(u-1)>v_{\Ma}(p)/(p-1)\}.
\]

By Equation (49)  of Definition 5.2.1 of \cite{Florez} there exists a 
 $c_{\beta:1}\in R_{\Ka_t,1}/\pi^kR_{\Ka_t,1}$ such that
\begin{equation}\label{defi-inv}
\big(\{\Td,u\},\ e_t\big)_{\Ka_t,k}^1
=\mathbb{T}_{\Ka_t/K} \big( \, \log( u) \, c_{\beta:1}\,  \big) \pmod{\pi^kC},
\end{equation}
for all $u\in V_{\Ka_t,1}$, where $\log$ is the usual logarithm: $\log(X)=(X-1)-(X-1)^2/2+\cdots$. Here it is useful to recall the notation from \eqref{notation-kummer-pairing} for the Kummer pairing $\big(\{\Td,u\},\ e_t\big)_{\Ka_t,k}$, i.e., the element $\big(\{\Td,u\},\ e_t\big)_{\Ka_t,k}^1 \in C/\pi^kC$ is such that
\[
\big(\{\Td,u\},\ e_t\big)_{\Ka_t,k}=\left[ \big(\{\Td,u\},\ e_t\big)_{\Ka_t,k}^1\right]_f(e_k).
\]
For a finite extension $\Ma/\Ka_t$ we denote by $\overline{c}_{\beta:1}$ the image of $c_{\beta:1}$ under the map
$R_{\Ka_t,1}/\pi^kR_{\Ka,t}\to R_{\Ma,1}/\pi^kR_{\Ma,1}$. For this element we have again, by (49)  of Definition 5.2.1 of \cite{Florez}, that
\begin{equation}\label{defi-invi-general}
\big(\{\Td,u\},\ e_t\big)_{\Ma,k}^1
=\TrM \big( \, \log( u) \, \overline{c}_{\beta:1}\,  \big) \pmod{\pi^kC},
\end{equation}
for all $u\in V_{\Ma,1}$.

In the following proposition we will  compute the constant $c_{\beta:1}$ explicitly. More specifically, we will show that
 \[
 c_{\beta:1}=-1/\pi^t \pmod{\pi^k R_{\Ka_t,1}},
 \]
which implies 
\begin{equation}\label{inv-form}
 \overline{c}_{\beta:1}=-1/\pi^t \pmod{\pi^k R_{\Ma,1}}.
  \end{equation}

Before we prove this we have to make one final observation. As it is remarked in \cite{Florez} ( right after Definition 5.2.1), the $c_{\beta:1}$ is an invariant of the isomorphism class of $F$. More concretely, if $\phi:(F,e_n)\to (\tilde{F},\tilde{e}_n)$ is an isomorphism of formal groups over $C$ such that $\phi(e_n)=\tilde{e}_n$ for all $n\geq 1$, and we denote the corresponding constant associated to $\tilde{F}$ by $\tilde{c}_{\beta:1}$, then $c_{\beta:1}=\tilde{c}_{\beta:1}\pmod{\pi^kR_{\Ka_t,1}}$. This is also pointed out, in the one-dimensional case, in \cite{koly} Equation c1 page 321. This fact can be used to our advantage in the following way. We may assume, by going to an isomorphic formal group law, that $r(X)=X$ is a $k$-normalized series of $F$ (cf. \cite{Florez} Section 2.4); this is a standard trick used by Kolyvagin in \cite{koly}. This means that for every $d$-dimensional local field $\Ma$ containing $\kappa_{f,k}$  (which will be our case since we are assuming $\Ma\supset K_t$ and $t>k$ ), the Kummer pairing satisfies 
\begin{equation}\label{norm-series-identity}
(\alpha,\ x)_{\Ma,k}=0
\end{equation}
for every $\alpha=\{a_1,\dots, a_d\}\in K_d(\Ma)$ such that $a_i=x$ for some $1\leq i \leq d$. 

We are now ready to state and prove the main result in this section.
  
\begin{prop}\label{Artin-Hasse}
Let $\Ma=\Ka_t$. For all $u\in V_{\Ma,1}$ we have
\begin{equation}
\big(\{\Td,u\},\ e_t\big)_{\Ma,k}^1
=\TrM \left(  \log (u )\, \left(-\frac{1}{\pi^t} \right)  \right) \pmod{\pi^kC},
\end{equation}
or equivalently,
\begin{equation}\label{compinv}
\normalfont \big(\{\Td,u\},\ e_t\big)_{\Ma,k}^1
=\text{Tr}_{K_t/K}\left( c_{\Ma/K_t}( \log (u)) \, \left(-\frac{1}{\pi^t} \right)  \right) \pmod{\pi^kC},
\end{equation}
where $c_{\Ma/K_t}$ is defined in Section \ref{Terminology and Notation}. In particular,
\begin{equation*}\label{}
 c_{\beta:1}=-1/\pi^t \pmod{\pi^k R_{\Ma,1}}.
  \end{equation*}
\end{prop}
\begin{proof}
Since $K_t/K$ is a totally ramified extension we will take as a uniformizer $\pi_t$ of $K_t$ (and consequently of $\Ma=\Ka_t$) the torsion point $e_t$. We start by observing that  every $u\in V_{\Ma,1}$ can be expressed as 
\[
 \prod_{\substack{\bar{i}
 =\id\in S\\ i_d\geq [v_{M}(p)/(p-1)]+1}}
 (1+\theta_{\,\bar{i}}\,T_1^{i_1} \cdots T_{d-1}^{i_{d-1}}\pi_t^{i_d}),
\]
where  $\theta_{\bar{i}}\in \mathcal{R}$, $\mathcal{R}$ is 
the group of $(q-1)$th roots of 1 in $K_t^*$, 
and $S\subset \Z^d$ is an admissible set ( cf. Corollary from Section 1.4.3 of  \cite{Zhukov}). The convergence of this product is with respect to the Parshin topology of $\Ma^*$ (cf. \cite{Zhukov} Chapter 1 \S1.4.2.). On the other hand, by Proposition 2.2.1 (6) of \cite{Florez} the mapping 
\[
V_{\Ma,1}\longrightarrow C/\pi^kC: u\to (\{T_1,\dots, T_{d-1},u\},e_t)_{\Ma,k}^1
\]
 is sequentially continuous with respect to the Parshin topology of $V_{\Ma,1}\subset \Ma^*$. Similarly the mapping
 \[
 V_{\Ma,1}\longrightarrow C/\pi^kC\,:\, u\mapsto  \TrM \left(  \log u \, \left(-\frac{1}{\pi^t} \right)  \right)
 \]
 is also sequentially  continuous with respect to the Parhsin topology by the following facts:
 $\log$ is sequentially continuous (cf. \cite{Florez} Lemma 6.2.3 and Remark 6.2.2), multiplication by a  fixed constant is a sequentially continuous map (cf. \cite{Florez} Proposition 6.1.1.), and $\TrM$ is also sequentially continuous (cf. \cite{Florez} Section 3.1). 
 
 With these considerations
it is enough to check (\ref{compinv}) for 
$$u=1+\theta\, T_1^{i_1} \cdots T_{d-1}^{i_{d-1}}\pi_t^{i_d}, \quad (\theta^{q-1}=1).$$
We are going to do this by cases:

Case 1) Suppose $(i_1\dots, i_{d-1})\neq (0,\dots, 0)$. Then the right hand 
side of (\ref{compinv}) is zero since $c_{\Ma/K_t} (\log u)=0$.  
Let us show that the left hand side is also zero as well. 

Suppose first $i_r>0$ for some $1\leq r\leq d-1$. Consider $\mathcal{N}=K_t\{\{Y_1\}\}\dots \{\{Y_{d-1}\}\}$ where $Y_r=T_r^{i_r}$ and $Y_m=T_m$, for $m\neq r$.
By lemma \ref{kummerian} below, $\Ma/\mathcal{N}$ is a 
finite extension of degree $i_r$ and 
$N_{\Ma/\mathcal{N}}(T_r)=\pm Y_r$.  Let also $i'_r=1$ and $i'_m=i_m$ for $m\neq r$.
To simplify the notation we will denote $T_1^{i_1}\cdots T_{d-1}^{i_{d-1}} $ and $Y_1^{i'_1}\cdots  Y_{d-1}^{i'_{d-1}}$ by $T^{\bar{i}}$ and $Y^{\bar{i'}}$, respectively.  Therefore by  \cite{Florez} Proposition  2.2.1 (4) 
\begin{align*}
&\left(\{\Td,1+\theta\, T^{\bar{i}}\ \pi_t^{i_d}\},\ e_t\right)_{\Ma,k}=\\
&=\left(N_{\Ma/\mathcal{N}}\{\Td,1+\theta\, T^{\bar{i}}\ \pi_t^{i_d}\},\  
  e_t\right)_{\mathcal{N},k}\\
&=\left(\{Y_1,\dots,N_{\Ma/\mathcal{N}}(T_r),\dots ,Y_{d-1},\ 1+\theta\, Y^{\bar{i'}}\pi_t^{i_d}\},\ e_t\right)_{\mathcal{N},k}\\
&=\left(\{Y_1,\dots, \pm1,\dots,Y_{d-1},1+\theta\,  Y^{\bar{i'}}\, \pi_t^{i_d}\}, \ e_t\right)_{\mathcal{N},k}\\
 &\hspace{150pt}\oplus \left(\{Y_1,\dots,Y_{d-1},1+\theta\, Y^{\bar{i'}}\pi_t^{i_d}\},\ e_t\right)_{\mathcal{N},k}.
\end{align*}
Since $p\neq 2$, $\left(\{Y_1,\dots,\pm1,\dots,Y_{d-1},1+\theta\, Y^{\bar{i'}}\ \pi_t^{i_d}\},\ e_t\right)_{\mathcal{N},k}=0$. On the other hand, since $\theta ^{q-1}=1$ then  
\begin{align*}
&\left(\{Y_1,\dots,\theta,\dots,Y_{d-1},1+\theta\, Y^{\bar{i'}}\pi_t^{i_d}\},\ e_t\right)_{\mathcal{N},k}\\
&=\frac{1}{q-1}\left(\{Y_1,\dots,\theta^{q-1},\dots,Y_{d-1},1+\theta\, Y^{\bar{i'}}\pi_t^{i_d}\},\ e_t\right)_{\mathcal{N},k}=0 
\end{align*}
( multiplication by $1/(q-1)$ makes sense since $(q-1,p)=1$). Moreover,
$$\left(\{Y_1,\dots,\pi_t,\dots,Y_{d-1},1+\theta\, Y^{\bar{i'}}\pi_t^{i_d}\},\ e_t\right)_{\mathcal{N},k}=0$$ from the relation \eqref{norm-series-identity} applied to the field $\Na\supset K_t$ and the Kummer pairing $(,)_{\Na,k}$, namely,
$(\{a_1,\dots,x,\dots, a_d\},\ x)_{\mathcal{N},k}=0$ for all $\{a_1,\dots,x,\dots, a_d\}\in K_d(\Na)$ ( recalling that $\pi_t=e_t$). Thus
\begin{align*}
&\left(\{\Td,1+\theta\, T^{\bar{i}}\pi_t^{i_d}\},\ e_t\right)_{\Ma,k}\\
=&\left(\{Y_1,\dots,Y_r,\dots,Y_{d-1},1+\theta\, Y^{\bar{i'}}\pi_t^{i_d}\},\ e_t\right)_{\mathcal{N},k}\\
=&\left(\{Y_1,\dots, \theta\, Y_1^{i'_1}\cdots Y_r^{i'_r}\cdots Y_{d-1}^{i'_{d-1}}\pi_t^{i_d},\dots,Y_{d-1},
1+\theta\, Y^{\bar{i'}}\pi_t^{i_d}\},\ e_t\right)_{\mathcal{N},k}\\
=&\left(\{Y_1,\dots, -\theta\, Y^{\bar{i'}}\pi_t^{i_d},\dots,Y_{d-1},1+\theta\, Y^{\bar{i'}}\pi_t^{i_d}\},\ e_t\right)_{\mathcal{N},k}.
\end{align*}
The second equality follows from the fact that 
$\{Y_1,\dots, Y_m,\dots, Y_{d-1},1+\theta\, Y^{\bar{i'}}\pi_t^{i_d}\}$ 
is trivial, for $m\neq r$, in the Milnor K-group $K_d(\Ma)$. 
Moreover, the last expression in the chain of equalities is again zero because 
$\{Y_1,\dots, -\theta\, Y^{\bar{i'}}\pi_t^{i_d},\dots,Y_{d-1},1+\theta\, Y^{\bar{i'}}\pi_t^{i_d}\}$
is the zero element, by the Steinberg property, in the Milnor K-group $K_d(\Ma)$.

Suppose now $i_r<0$. We take $Y_r=T_k^{-i_r}$ instead and 
by lemma \ref{kummerian} we have 
$N_{\Ma/\mathcal{N}}(T_k^{-1})=\pm  T_r^{-i_r}=\pm Y_r$. 
Noticing that 
\begin{align*}
&\left(\{T_1,\dots, T_r,\dots, T_{d-1},1+\theta\, T^{\bar{i}}\pi_t^{i_d}\},\ e_t\right)_{\Ma,k}\\
&=-\left(\{T_1,\dots, T_r^{-1},\dots, T_{d-1},1+\theta\, T^{\bar{i}}\pi_t^{i_d}\},\ e_t\right)_{\Ma,k}
\end{align*}
we can now apply the same argument as before to conclude that 
\[\left(\{T_1,\dots, T_r,\dots, T_{d-1},1+\theta\, T^{\bar{i}}\pi_t^{i_d}\},\ e_t\right)_{\Ma,k}=0.\]

Case 2) Suppose $(i_1\dots, i_{d-1})=(0,\dots, 0)$. 
That is, when $u$ is an element of the one dimensional 
local field $K_t$. In this case, we will show in lemma 
\ref{one-two-connec} below that the pairing 
$\left(\{T_1\dots, T_{d-1},u\},\ e_t\right)_{\Ma,k}$ 
coincides with the pairing taking values in the 
one dimensional local field $K_t$, namely  
$\left(u,\ e_t\right)_{K_t,k}$. Thus, by \cite{koly} 
section 7.3.1 and the fact that $c_{\Ma/K}(\log (u))=\log(u)$ formula (\ref{compinv}) follows.
\end{proof}

\begin{lemma}\label{kummerian}
Let $M$ be a complete discrete valuation field and $\Ma=M\T$. 
Put $Y=T^{j}$ for $j> 0$. Define $\mathcal{N}=M\{\{Y\}\}$.
Then $\Ma/\mathcal{N}$ is a finite extension of degree $j$ and 
$N_{\Ma/\mathcal{N}}(T)=\pm Y$.
\end{lemma}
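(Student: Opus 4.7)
The plan is to construct an explicit basis of $\Ma$ over $\Na$ and then read off the norm from the matrix of multiplication by $T$. First I would show that every element $\alpha=\sum_{n\in\Z}\alpha_n T^n\in\Ma$ admits a unique decomposition
\[
\alpha=\sum_{r=0}^{j-1}a_r\,T^r,\qquad a_r\in\Na,
\]
by partitioning indices modulo $j$: set $a_r=\sum_{k\in\Z}\alpha_{kj+r}Y^k$. The key step is to verify the two convergence conditions defining $\Na=M\{\{Y\}\}$ (boundedness below of $v_M(\alpha_{kj+r})$ and $v_M(\alpha_{kj+r})\to+\infty$ as $k\to-\infty$) for each fixed $r$; these follow immediately from the analogous conditions for the full sequence $(\alpha_n)$. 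Uniqueness is obtained by reading off coefficients. Hence $\{1,T,\dots,T^{j-1}\}$ is an $\Na$-basis of $\Ma$, and $[\Ma:\Na]=j$.

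Next, $T$ is a root of $X^j-Y\in\Na[X]$, a polynomial of degree $j=[\Ma:\Na]$, so $X^j-Y$ must be the minimal polynomial of $T$. From the basis above, the matrix of multiplication by $T$ in this basis is the companion matrix
\[
\begin{pmatrix}
0 & 0 & \cdots & 0 & Y\\
1 & 0 & \cdots & 0 & 0\\
0 & 1 & \cdots & 0 & 0\\
\vdots & & \ddots & & \vdots\\
0 & 0 & \cdots & 1 & 0
\end{pmatrix},
\]
whose determinant is $(-1)^{j-1}Y=\pm Y$. Therefore $N_{\Ma/\Na}(T)=\pm Y$.

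I do not expect any real obstacle here: the only slightly delicate point is checking that the coefficient series $a_r=\sum_k\alpha_{kj+r}Y^k$ actually lie in $\Na$, i.e.\ that the convergence conditions for $M\{\{Y\}\}$ are inherited from those for $M\{\{T\}\}$. Once this is in hand, the degree computation and the norm computation are purely formal.
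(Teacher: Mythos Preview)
Your proof is correct and takes a genuinely different route from the paper's. The paper first reduces to the case where $M$ contains a primitive $j$th root of unity $\zeta_j$ (via a base-change diagram), then shows by a valuation argument on $\Na$ and its residue field that $Y$ has exact order $j$ in $\Na^*/(\Na^*)^j$, invokes Kummer theory to conclude that $X^j-Y$ is irreducible over $\Na$, and finally computes $N_{\Ma/\Na}(T)$ as the product of the conjugates $\zeta_j^k T$, obtaining $(-1)^{j+1}Y$. Your argument bypasses all of this: by partitioning the Laurent exponents modulo $j$ you produce the explicit $\Na$-basis $\{1,T,\dots,T^{j-1}\}$ directly, read off $[\Ma:\Na]=j$, conclude that $X^j-Y$ is the minimal polynomial by a degree count, and compute the norm as the determinant of the companion matrix. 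Your approach is more elementary---no roots of unity, no Kummer theory, no auxiliary base change---and works uniformly without any hypothesis on the relation between $j$ and the characteristic; the paper's approach, on the other hand, makes the Galois structure of $\Ma/\Na$ visible and gives the sign via the product $\prod_k \zeta_j^k$. Both arrive at $N_{\Ma/\Na}(T)=(-1)^{j-1}Y=(-1)^{j+1}Y=\pm Y$.
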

\textbf{Remark}: Since $M$ is a complete discrete valuation field the result 
 immediately generalizes to the $d$-dimensional case, for if $\La=L\TT$, then we can take
 $M$ to be $L\{\{T_1\}\}\dots \{\{T_{d-2}\}\}$   and apply the result to $\Ma=M\{\{T_{d-1}\}\}$
 and $\mathcal{N}=M\{\{Y_
 {d-1}\}\}$, where $Y_{d-1}=T_{d-1}^j$.
\begin{proof}
We can assume that $M$ contains $\zeta_j$, 
a primitive $j$th root  of 1. 
Otherwise we can consider the diagram
\[
 \begin{tikzpicture}[node distance = 2cm, auto]
      \node (Q) {$\mathcal{N}=M\{\{Y\}\}$};
      \node (E) [above of=Q, left of=Q] {$\Ma=M\T$};
      \node (F) [above of=Q, right of=Q] {$M(\zeta_j)\{\{Y\}\}$};
      \node (K) [above of=Q, node distance = 4cm] {$M(\zeta_j)\T$};
      \draw[-] (Q) to node {} (E);
      \draw[-] (Q) to node {} (F);
      \draw[-] (E) to node {} (K);
      \draw[-] (F) to node {} (K);
      \end{tikzpicture}
      \]
we see that $[\Ma/\mathcal{N}]=[M(\zeta_j)\T: M(\zeta_j)\{\{Y\}\}]$ 
since $M\T \cap\left(  M(\zeta_j)\T \right) =M\{\{Y\}\}$. 

Note that $Y$ has exact order $j$ in $\mathcal{N}^*/(\mathcal{N}^*)^j$ for if 
$Y=\alpha^k$, $\alpha\in \mathcal{N}^*$, then 
$0=v_{\mathcal{N}}(Y)=kv_{\mathcal{N}}(\alpha)$, thus 
$\alpha \in \mathcal{O}_{\mathcal{N}}^*$, and we can go 
to the residue field $k_{\mathcal{N}}$ of $\mathcal{N}$ where we have 
$1=v_{k_\mathcal{N}}(\overline{Y})=kv_{k_\mathcal{N}}(\overline{\alpha})$, which implies $k=1$.   
Then by Kummer theory (cf. \cite{cassels} Chapter 3 Lemma 2) 
we have that the polynomial $P(X)=X^j-Y\in \mathcal{O}_{\mathcal{N}}[X]$   
is irreducible.  Thus  $[\Ma/\mathcal{N}]=j$, 
and $N_{\Ma/\mathcal{N}}(T)$ is the product 
of the roots of the polynomial $P(X)$. 
These roots are $\zeta_j^kT$, $k=1,\dots,j$. Thus
\[
N_{\Ma/\mathcal{N}}(T)
=\prod_{k=1}^{j}\zeta_j^kT
=\zeta_j^{\frac{j(j+1)}{2}}T^j
=
\begin{cases}
T^j=Y, &  \text{if $j$ is odd,}\\
-T^j=-Y, &  \text{if $j$ is even.}\\
\end{cases}
=(-1)^{j+1}Y.
\]
\end{proof}

% To show that the exact order of Y in $\mathcal{N}^*/(\mathcal{N}^*)^j$
%we consider the valuation on the residue fieldof $\Ma$, namely $k_M((\bar{Y}))$, applied to $Y^k=g(Y)^j$, for $1\leq k\leq j$. This would tell us that $j$ has to divide $k$. 

\begin{lemma}\label{one-two-connec} 
For a local field $L/\Qp$, let $\La=\LTT$ and define the map
\begin{align}
f: L^*
\to \normalfont \text{Gal}(L^{ab}/L)
\quad \mathrm{ 
 by} \quad
a\to \Upsilon_{\La} (\{\Td,a\})\bigg|_{_{\text{Gal}(L^{ab}/L)}},
\end{align}
induced by the natural restriction $\normalfont \text{Gal}(\La^{ab}/\La)\to \text{Gal}(L^{ab}/L)$. Recall that $\Upsilon_\La:K_d(\La)\to \text{Gal}(\La^{ab}/\La)$  is  
Kato's reciprocity map for $\La$ (cf. \cite{Florez} \S2.1.4). Then $f$  coincides with the Artin's local reciprocity map for $L$:  
$\theta_L:L^*\to \normalfont \text{Gal}(L^{ab}/L)$. 
Thus, for 
$L=K_t$  and $\Ma=K_t\TT$ we have 
\[\left(\{\Td,u\},\ e_t\right)_{\Ma,k}=\left(u,\ e_t\right)_{K_t,k},\]
for all $u\in V_{L,1}=\{x\in L: v_L(x-1)>v_L(p)/(p-1)\}$. Here $\left(u,\ e_t\right)_{K_t,k}$
 denotes the Kummer pairing of the (one-dimensional) local field $K_t$.
\end{lemma}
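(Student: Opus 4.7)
The statement has two parts: (i) the identification $f = \theta_L$ as continuous homomorphisms $L^* \to \mathrm{Gal}(L^{ab}/L)$, and (ii) the equality of Kummer pairings. Part (ii) follows from (i) by unwinding the definition of the generalized Kummer pairing, so the main task is to establish (i).

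For (i), my strategy is to use the functoriality of Kato's reciprocity map $\Upsilon_\La$ with respect to the natural restriction $\mathrm{res}: \mathrm{Gal}(\La^{ab}/\La) \to \mathrm{Gal}(L^{ab}/L)$. Specifically, I would establish that for any $a \in L^*$,
\[
\mathrm{res}\bigl(\Upsilon_\La(\{T_1, \ldots, T_{d-1}, a\})\bigr) = \theta_L(a).
\]
This can be viewed as the commutativity of a square relating $\Upsilon_\La$ and $\theta_L$ via the iterated residue $\partial: K_d(\La) \to K_1(L) = L^*$, which by direct computation in Milnor K-theory sends $\{T_1, \ldots, T_{d-1}, a\}$ to $a$. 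An alternative, more elementary route is induction on $d$: the case $d = 1$ is tautological since then $\La = L$ and the symbol reduces to $a$ itself; for $d \geq 2$ one writes $\La = \La'\{\{T_{d-1}\}\}$ with $\La' = L\{\{T_1\}\}\cdots\{\{T_{d-2}\}\}$ a $(d-1)$-dimensional local field containing $L$, and uses the compatibility of $\Upsilon$ with the residue at $T_{d-1}$ to reduce to the corresponding statement for $\{T_1, \ldots, T_{d-2}, a\} \in K_{d-1}(\La')$, whereupon the inductive hypothesis concludes. Since $f$ and $\theta_L$ are both continuous, it is enough to verify the equality on a dense subgroup (e.g., on $\pi_L^{\Z}$ together with the principal units).

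For (ii), unwinding definitions yields
\[
\bigl(\{T_1, \ldots, T_{d-1}, u\}, e_t\bigr)_{\Ma, k} = \Upsilon_\Ma\bigl(\{T_1, \ldots, T_{d-1}, u\}\bigr)(z) \ominus_f z,
\]
where $z \in \overline{K}$ satisfies $f^{(k)}(z) = e_t$. Since $z$ lies in the Lubin-Tate abelian extension $K_{t+k}/L$, the action of any $\sigma \in \mathrm{Gal}(\Ma^{ab}/\Ma)$ on $z$ depends only on $\mathrm{res}(\sigma) \in \mathrm{Gal}(L^{ab}/L)$. Applying (i), this restriction equals $\theta_L(u)$, so the pairing reduces to $\theta_L(u)(z) \ominus_f z = (u, e_t)_{L, k}$, which is precisely the one-dimensional Kummer pairing.

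The main technical obstacle is the functoriality invoked in (i)---the commutativity of Kato's $\Upsilon_\La$ with the one-dimensional Artin map $\theta_L$ via the restriction $\mathrm{res}$ and the iterated residue. This is a standard property of higher local class field theory, essentially contained in Kato's original construction, but verifying it concretely for the standard field $\La = L\TT$ and for the specific symbols $\{T_1, \ldots, T_{d-1}, a\}$ with $a \in L^*$ requires careful manipulation of the residue calculus developed in Section 2 of \cite{Florez}.
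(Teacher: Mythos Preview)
Your reduction of (ii) to (i) is correct and matches the intended use of the lemma.

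For (i), however, the paper takes a different route, and your proposed route has a genuine gap. The paper does \emph{not} try to identify $f$ with $\theta_L$ by a single functoriality statement. Instead it invokes the characterization of the local Artin map from \cite{cassels}, Chapter~5, \S2.8, Proposition~6: any homomorphism $L^*\to \mathrm{Gal}(L^{ab}/L)$ satisfying (1) the composite $L^*\to \mathrm{Gal}(L^{un}/L)$ is $a\mapsto \mathrm{Frob}^{v_L(a)}$, and (2) norms from every finite abelian $L'/L$ act trivially on $L'$, must equal $\theta_L$. Condition~(1) is checked via the standard commutative ladder relating $\Upsilon_{\La_{(m)}}$ to $\Upsilon_{\La_{(m-1)}}$ through the boundary maps $\partial:K_m(\La_{(m)})\to K_{m-1}(\La_{(m-1)})$; iterating these one lands in $K_0(k_L)=\mathbb Z$ and the composite is $\{T_1,\dots,T_{d-1},a\}\mapsto v_L(a)$. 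Condition~(2) follows from the norm functoriality of $\Upsilon_\La$ (\cite{Florez}, Theorem~2.1.1), since $\{T_1,\dots,T_{d-1},N_{L'/L}(\alpha)\}=N_{\La'/\La}\{T_1,\dots,T_{d-1},\alpha\}$.

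The gap in your approach is the object ``iterated residue $\partial:K_d(\La)\to K_1(L)=L^*$''. For $\La=L\{\{T_1\}\}\cdots\{\{T_{d-1}\}\}$ the discrete valuation has uniformizer $\pi_L$ and residue field $k_L((\overline T_1))\cdots((\overline T_{d-1}))$, a characteristic-$p$ field; the boundary maps in Kato's theory pass through this chain and terminate in $K_0(k_L)=\mathbb Z$, not in $L^*$. Likewise, in your inductive step $\La=\La'\{\{T_{d-1}\}\}$, the element $T_{d-1}$ is a \emph{unit} of $\La$, so there is no ``residue at $T_{d-1}$'' with target $K_{d-1}(\La')$ among the standard compatibilities of higher class field theory. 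Thus the commutative square you want is not one of the known functorialities; establishing it directly is essentially equivalent to the lemma itself. The paper's two-condition argument sidesteps this by only needing the boundary ladder down to $\mathbb Z$ (to pin down Frobenius) together with norm compatibility (to pin down the kernel), both of which are documented properties of $\Upsilon$.
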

\begin{proof}  
It is enough to verify the two conditions of \cite{cassels} 
Chapter 5 \S 2.8 Proposition 6. Let $\La_{(d)}$ be $\La$, and 
$\La_{(d-1)}=k_{L}((t_1))\cdots((t_{d-1})),\dots, \La_{(0)}=k_{L}$  
the chain of residue fields of $\La$. 
Let $\partial :K_m(\La_{(m)})\to K_{m-1}(\La_{(m-1)})$, $m=1,\dots, d$, be the boundary map defined in Chapter 6, Section 6.4.1 of \cite{Zhukov} ( See also  \cite{Fesenko} IX \S2). Then by Theorem 3 of Section 10.5 of \cite{Zhukov}
the following diagram commutes:
\[
\begin{CD}
 K_d\left(\La_{(d)}\right) 
           @>{\Upsilon_{ \La_{(d)} }}>> 
                 \normalfont \text{Gal}\big(\,\La_{(d)}^\text{ab}/\La_{(d)}\,\big)  \\
@V{\partial}VV @VV{\sigma\to \overline{\sigma}}V \\
 K_{d-1}\left( \La_{(d-1)}\right)  
         @>{\Upsilon_{\La_{(d-1)}}}>> 
              \normalfont \text{Gal}\big(\,\La_{(d-1)}^{\text{ab}}/\La_{(d-1)}\,\big)\\ 
\vdots   @. \vdots \\
  @V{\partial}VV @VV{\sigma\to \overline{\sigma}}V \\
 \mathbb{Z}=K_0\left(\La_{(0)} \right)  
        @>{\Upsilon_{\La_{(0)}}}>> \normalfont \text{Gal}(\La_{(0)}^{\text{ab}}/\La_{(0)})
\end{CD}
\]
Moreover, by Section 6.4.1 of \cite{Zhukov} (See also \cite{Kato}  Section 1, Theorem 2) 
the composition of the vertical maps $\partial\circ \cdots \circ \partial( \{T_1\dots, T_{d-1},a\})$ coincide with the valuation $v_L(a)$ for $a\in L^*$. 
Therefore $f:L^*\to \text{Gal}(L^{ab}/L)\to \text{Gal}(L^{un}/L)$ 
is the valuation map $v_L:L^*\to \mathbb{Z}$. 
Thus condition (1) of \cite{cassels} Chapter 5 \S 2.8 
Proposition 6 is verified.

 Suppose $L'/L$ is a finite abelian extension and let $\La'=L'\TT$.
 If $a\in L^*$ 
is a norm from $(L')^*$, 
namely $a=N_{L'/L}(\alpha)$, then clearly $\{T_1,\dots, T_{d-1},a\}$ 
is a norm from $K_d(\La')$, 
namely 
\[\{T_1,\dots, T_{d-1},a\}=\{T_1,\dots, T_{d-1},N_{L'/L}(\alpha)\}=N_{\La'/\La} \{T_1,\dots, T_{d-1},\alpha\},.\] 
Then by (1) of Theorem 2.1.1 of \cite{Florez}  we have that $\Upsilon_{\La} ({T_1,\dots,T_{d-1},a}) $ is trivial 
on $\La'$ and so $f(a)$ is trivial on $L'$. 
Thus condition (2) of \cite{cassels} Chapter 5 \S 2.8 
Proposition 6 is verified.
\end{proof}

\section{Generalized Kolyvagin formulas}\label{Generalized Kolyvagin formulas}

In this section we will provide a refinement 
of the formulas given for Theorem 5.3.1. of \cite{Florez} 
to the case of a Lubin-Tate formal group $F=F_f$. 

Using the notation in (\ref{notational}) we define
 \[
 K_d(\La_s)'=\bigcap_{t\geq s}N_{t/s}\big(\, K_d(\La_t)\,\big).
 \]
Also, as in Section \ref{The canonical logarithmic derivatives},  let $r$ maximal, and $r'$ minimal, such that
\[
K_r\subset \La\cap K_{\pi,\infty}\subset K_{r'}.
\] 

Recall from Section \ref{Description of the results} that $F(\mu_{\La})$ denotes the ideal $\mu_{\La}=\{x\in\La:v_{\La}(x)>0\}$ of $\OLa$ endowed with the group structure of $F$. Let $T_{\La}$ be the image of the map $l_f:F(\mu_{\La})\to \La$. Then clearly $T_{\La}$ is a $C$-module. Let $R_{\La}$ be the dual of $T_{\La}$ under the trace paring $\La\times \La \to \La: (x,y)\mapsto \mathbb{T}_{\La/K}(xy)$, i.e.,
\begin{equation}\label{dual-log}
R_{\La}=\{\,x\in \La\,:\, \mathbb{T}_{\La/K}\big(x\,\l_f(y)\big)\in C,\ \forall y\in F(\mu_{\La})\, \}
\end{equation} 
(cf.  \cite{Florez} Section 3.2.2  for all the properties of $T_{\La}$ and $R_{\La}$). We are now ready to state the main result.

\begin{thm}\label{Lubinazo} 
Take $s\geq \max\{r', n+r+\log_q(e(\La/\Ka_r))\}$. 
Then $\normalfont \text{Tr}_s$ takes 
$(\pi^s/(\pi_1\gamma_s))P_s$ to $\pi^nR_\La$ so 
that it induces a homomorphism
\[
\normalfont \text{Tr}_s: \frac{\ \frac{1}{\gamma_s}P_s\ }{\frac{\pi^s}{\pi_1\gamma_s}P_s} \longrightarrow \La/\pi^nR_\La
\]
and the following formula holds
\begin{equation}\label{phiLubin}
\big(\,N_{s}(\alpha)\,,\,x\,\big)_{\La,n}
=\left[\, \mathbb{T}_{\La/K}\big(\normalfont \, \text{Tr}_s\big(\,QL_s(\alpha)
\, l_f(x)\, \big)\,\right]_f (e_n)
\end{equation}
%or equivalently,
%\[
%(N_{\La_s/\La}(\ad),x)_{\La,n}
%=\left[\mathbb{T}_{\La_s/K}\left(\ QL_s(\ad)\  l_f(x)\  \right) \right]_f (e_n) 
%\]
for all $x\in F(\mu_{\La})$ and  all $\alpha\in K_d(\La_s)'$.

\end{thm}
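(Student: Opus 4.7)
The strategy is to transport the general reciprocity law of \cite{Florez} Theorem 5.3.1, applied at a sufficiently high level $\La_t$ where the invariant appearing in that theorem is known explicitly via the Artin-Hasse formula of Proposition \ref{Artin-Hasse}, down to level $\La$ using the norm compatibility of $QL$ established in Proposition \ref{levelQL} together with the transitivity of the generalized trace $\mathbb{T}$.

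First I would verify the trace well-definedness claim. Using the tower formula for differents together with the computation $D(K_s/K) = (\pi^s/\pi_1)\OZ_{K_s}$ recorded in \cite{Florez}, one analyzes $D(\La_s/\La)$ via the chain $\La_s \supset \La \supset \Ka_r \supset \Ka$. The hypothesis $s \geq n + r + \log_q(e(\La/\Ka_r))$ is precisely what makes the factor $\pi^s/\pi_1$ large enough to absorb $D(\La_s/\La)$ and still leave a factor of $\pi^n$, so iterating the trace inclusion \eqref{TrazaP_t} yields $\text{Tr}_s\bigl((\pi^s/\pi_1\gamma_s)P_s\bigr) \subset \pi^n R_\La$, and the induced map on the quotient is well-defined.

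For the main formula I would pick $t \geq s$ so large that both the hypotheses of \cite{Florez} Theorem 5.3.1 are satisfied at level $\La_t$ and there exists $\beta \in K_d(\La_t)$ with $N_{t/s}(\beta) = \alpha$ (which is possible precisely because $\alpha \in K_d(\La_s)'$). By the norm functoriality of the Kummer pairing (\cite{Florez} Proposition 2.2.1 (4)) and the tower law $N_s \circ N_{t/s} = N_{\La_t/\La}$, we have
\[
(N_s(\alpha), x)_{\La, n} = (\beta, x)_{\La_t, n}.
\]
Theorem 5.3.1 of \cite{Florez} expresses the right side in terms of a multidimensional derivation which, by \eqref{defi-inv}, depends on a single invariant $c_\beta$. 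Proposition \ref{Artin-Hasse} pins this invariant down to $-1/\pi^t$ by testing against the base symbol $\{T_1,\dots,T_{d-1},u\}$ paired with $e_t$, and comparison with the definition \eqref{general-derivation-def} of $Q_{\La_t,t}$ combined with \eqref{QL} identifies the resulting expression with $\bigl[\mathbb{T}_{\La_t/K}\bigl(QL_t(\beta)\, l_F(x)\bigr)\bigr]_f(e_n)$.

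Finally I would descend from level $t$ to level $\La$. Since $l_F(x) \in \La \subset \La_s$ is fixed by $\text{Gal}(\La_t/\La_s)$, Proposition \ref{levelQL} gives
\[
\text{Tr}_{t/s}\bigl(QL_t(\beta)\, l_F(x)\bigr) = QL_s(\alpha)\, l_F(x),
\]
and transitivity of the generalized trace yields $\mathbb{T}_{\La_t/K} = \mathbb{T}_{\La/K} \circ \text{Tr}_s \circ \text{Tr}_{t/s}$, which combined with the preceding identity produces \eqref{phiLubin}. The principal difficulty is coordinating the various bounds: one needs simultaneously that \cite{Florez} Theorem 5.3.1 is applicable at $\La_t$, that the congruence in Proposition \ref{Artin-Hasse} is fine enough to survive the descent $\text{Tr}_{t/s}$, and that the trace inclusion \eqref{TrazaP_t} loses no precision modulo $\pi^n$ after the final application of $\mathbb{T}_{\La/K}$. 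The explicit lower bound $s \geq n + r + \log_q(e(\La/\Ka_r))$ is precisely calibrated to make all three constraints compatible.
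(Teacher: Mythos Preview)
Your overall strategy matches the paper's, but there is a genuine gap in the descent step and your treatment of the trace inclusion is too vague to be correct as stated.

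\textbf{The main gap: applying Proposition \ref{levelQL}.} You write that ``Proposition \ref{levelQL} gives $\text{Tr}_{t/s}\bigl(QL_t(\beta)\, l_F(x)\bigr) = QL_s(\alpha)\, l_F(x)$'' for an arbitrary preimage $\beta\in K_d(\La_t)$ of $\alpha$. But Proposition \ref{levelQL} is only proved for symbols of the special shape $\{a_1,a_2,\dots,a_d\}$ with $a_1\in\La_t^*$ and $a_2,\dots,a_d\in\La_s^*$; it says nothing about a general element of $K_d(\La_t)$. A randomly chosen $\beta$ with $N_{t/s}(\beta)=\alpha$ will not be a product of such symbols. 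The paper closes this gap with Lemma \ref{berr}, which shows that the norm subgroup $N_{t/s}(K_d(\La_t))\subset K_d(\La_s)$ is generated by symbols in $K_d(\La_s)$ having one entry in $N_{t/s}(\La_t^*)$; via the projection formula each such generator is the norm of a symbol of the special shape, so one may replace $\beta$ by an $\epsilon$ to which Proposition \ref{levelQL} actually applies. Without this lemma (or an equivalent argument) the descent step fails.

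\textbf{The trace inclusion.} Your sketch for $\text{Tr}_s\bigl((\pi^s/\pi_1\gamma_s)P_s\bigr)\subset \pi^n R_\La$ is purely different-theoretic, but $R_\La$ is the dual of $T_\La=l_F(F(\mu_\La))$ with respect to $\mathbb{T}_{\La/K}$, so controlling it requires input from the formal group. The paper's argument is not a different computation at all: it iterates the congruence $f(X)\equiv X^q\pmod{\pi X}$ to show that for $s'\geq r+\log_q(e(\La/\Ka_r))$ one has $v(l_F(f^{(s')}(x)))=v(f^{(s')}(x))>v(\pi)/(q-1)$ for every $x\in F(\mu_\La)$, whence $(\pi^{s'-1}/\pi_1)T_\La\subset\OZ_\La$ and by duality $(\pi^{s'-1}/\pi_1)D(\La/K)^{-1}\OZ_\La\subset R_\La$. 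Only after this formal-group estimate does the trace inclusion \eqref{TrazaP_t} together with $\pi_1\pi_\La\mid\pi$ yield the claim for $s\geq n+s'$. Your sketch omits this essential step.
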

\begin{proof}
Recalling the notation of Equation \eqref{notation-kummer-pairing},  Equation \eqref{phiLubin}
is equivalent to
\begin{equation}\label{phi-phi}
\big(\,N_{s}(\alpha)\,,\,x\,\big)_{\La,n}^1
= \mathbb{T}_{\La/K}\big(\normalfont \, \text{Tr}_s\big(\,QL_s(\alpha)
\, l_f(x)\, \big) \pmod{\pi^nC}.
\end{equation}

Let $\Ma=\La_t$ and take $\pi_{\Ma}=\gamma_t$. The strategy of the proof is the following: First, we will show that
$QL_t$ coincides with the logarithmic derivative $\mathfrak{Dlog}_{\Ma,m}^1$ from Theorem 5.3.1 of \cite{Florez}; where $t$, $k$ and $m$ are as in the statement of that theorem. 
Then, we will descend to the logarithmic derivative $QL_s$, for 
$s\geq \max\{r', n+r+\log_q(e(\La/\Ka_r))\}$, using the fact that 
$\text{Tr}_s((\pi^s/\pi_1\gamma_s)P_s)\subset \pi^nR_{\La}$; this fact will be demonstrated at the end of the proof.

Let $v$ be the normalized valuation $v_{\Ma}/v_{\Ma}(p)$ and let $R_{t,1}=R_{\Ma,1}$. 
Since $K_1/K$ is totally ramified and $[K_1/K]=q-1$, then $\pi_1^{\varrho(q-1)}\sim \gamma_t^{v_{\Ma}(p)}\sim p$. So $\pi_1$ divides $\gamma_t^{v_{\Ma}(p)/(p-1)}$ and we have 
\[
\frac{1}{\gamma_t}P_t
=\frac{1}{\gamma_t\pi_1D(\La_t/K)}\OZ_{\La_t}
\subset \frac{1}{\gamma_t^{v_{\Ma}(p)/(p-1)+1}D(\La_t/K)}\OZ_{\La_t}
=R_{t,1}.
\]
If $k<t$, then $\pi_1|\pi^{t-k}\gamma_t$, which implies $(\pi^k/\gamma_t)|(\pi^t/\pi_1)$ and so
\[
\frac{\pi^t}{\pi_1\gamma_t}P_t \subset \frac{\pi^k}{\gamma_t}R_{t,1}.
\]
Consider $t$, $k$ and $m$ as in Theorem 5.3.1 of \cite{Florez}. 
In particular, since $t=2k+\varrho+1$ then $k<t$, and 
therefore we can look at the 
factorization of 
$Q_t: \OMa^d\to P_t/(\pi^t/\pi_1)P_t$ 
into $R_{t,1}/(\pi^k/\gamma_t)R_{t,1}$. 
Let $b'$ be as in Lemma \ref{tech-lemma} ( applied to $s=t$), namely, $dT_1\wedge\cdots \wedge w_t=b'\,dT_1\wedge \cdots \wedge \pi_{\Ma}$, which by Equation \eqref{myst-b} can be taken to be of the form $b'=l_f'(e_t)g'(\pi_\Ma)$, where $g(X)$ is a polynomial in $\mathcal{O}_{\Ma_{(0)}}[X]$ such that $g(\pi_\Ma)=e_t$.  
Then for $(a_1,\dots, a_d):=(T_1,\dots,T_{d-1}, e_t)$ we have that 
$$\det \left[ \frac{\partial a_i}{\partial T_j}\right]
=\det  
\begin{pmatrix}
1& 0 &\cdots & \frac{\partial g}{\partial T_1}(\pi_\Ma)\\
0 & 1 &\cdots & \frac{\partial g}{\partial T_2}(\pi_\Ma)\\
\vdots& \vdots & & \vdots \\
0&0 &\cdots & g'(\pi_{\Ma})
\end{pmatrix}
=g'(\pi_{\Ma}).$$ 
Thus from  Equation \eqref{general-derivation-def} we obtain
\[
Q_t(\Td,e_t)=\frac{T_1\cdots T_{d-1}g'(\pi_{\Ma})}{\pi^tb'}=\frac{T_1\cdots T_{d-1}}{\pi^tl_f'(e_t)}.
\]
According to \eqref{inv-form} we have $\overline{c}_{\beta:1}=-1/\pi^t\pmod{\pi^kR_{t,1}}$, where $\overline{c}_{\beta:1}$ is the constant from Equation \eqref{defi-invi-general}. From this we obtain
\begin{equation}\label{invi}
Q_t(\Td,e_t)=\frac{T_1\cdots T_{d-1}}{\pi^tl_f'(e_t)}
=\frac{-T_1\cdots T_{d-1}\overline{c}_{\beta:1}}{l'(e_t)} \pmod{\frac{\pi^k}{\gamma_t}R_{t,1}}.
\end{equation} 
On the other hand, from Equation  \eqref{qmsl} we have
\begin{equation}\label{inv-derii}
Q_t(\Td,e_t)=\frac{T_1\cdots T_{d-1}g'(\pi_{\Ma})}{\pi^tb'}=g'(\pi_{\Ma})Q_t(\Td,\pi_{\Ma}).
\end{equation}

If we now let $\mathfrak{b}:=g'(\pi_{\Ma})=b'/l_f'(e_t)$ then, from Lemma \ref{tech-lemma} and Equation \eqref{new-diff-iden} (applied to $s=t$), we have $dT_1\wedge \cdots \wedge de_t=\mathfrak{b}\,dT_1\wedge \cdots \wedge d\pi_{\Ma}$. Moreover, from Equations \eqref{invi} and \eqref{inv-derii}, we see that 
$$\gamma_i:=Q_t(\Td,\pi_{\Ma})/(T_1\dots T_{d-1})\in R_{t,1}/(\pi^k/\gamma_t)R_{t,1}$$ 
is such that $\mathfrak{b}\gamma_i=-\overline{c}_{\beta:1}/l_f'(e_t)$. 
From  \cite{Florez} Lemma 5.2.1 all such $\gamma_i\in R_{t,1}/(\pi^k/\gamma_t)R_{t,1} $ satisfying the equation $\mathfrak{b}\gamma_i=-\overline{c}_{\beta:1}/l_f'(e_t)$
coincide when reduced to $R_{t,1}/(\pi^m/\gamma_t)R_{t,1}$. 
 In particular, this implies that
$$Q_t(T_1,\dots, T_{d-1},\pi_{\Ma})=\mathfrak{D}_{\Ma,m}^1(T_1,\dots, T_{d-1}, \pi_{\Ma})\pmod{(\pi^m/\gamma_t)R_{t,1}},$$ 
where $\mathfrak{D}_{\Ma,m}^1$ is the derivation from Definition 5.2.2 of \cite{Florez}. 
Therefore,  by \cite{Florez} Proposition 4.2.3, this guarantees that the reduction of $Q_t$ to $R_{t,1}/(\pi^m/\gamma_t)R_{t,1}$ 
coincides with  $\mathfrak{D}_{\Ma,m}^1$: Indeed, both maps are $d$-dimensional derivations that coincide at $(T_1,\dots,\pi_{\Ma})$. As a consequence of this, the map $\mathfrak{Dlog}_{\Ma,m}^1$
from \cite{Florez} Definition 5.2.2. coincides with the map
$$
QL_t:K_d(\Ma)\longrightarrow \frac{\frac{1}{\gamma_t}P_t}{\frac{\pi^t}{\pi_1\gamma_t}P_t}$$
 from Definition \ref{defilQL}. Therefore, we can replace  $\mathfrak{Dlog}_{\Ma,m}^1$ by  $QL_t$ in \cite{Florez} Theorem 5.3.1 to obtain 
 \begin{equation}\label{key-id-res}
(N_{\La_t/\La}(\epsilon),x)_{\La,n}^1=\mathbb{T}_{\La_t/K}(QL_t(\epsilon)\,l_f(x)) \pmod{\pi^nC}, 
  \end{equation}
for all $\epsilon \in K_d(\La_t)$ and all $x\in F(\mu_{\La})$.

On the other hand, by Proposition \ref{levelQL}  we know that for $ s\geq r'$ the following identity holds for all $\epsilon\in K_d(\La_t)$
\begin{equation}\label{trace-QL-t-s}
QL_s\big(\,N_{t/s}(\epsilon)\,\big)= \normalfont \text{Tr}_{t/s}\big(\,QL_t(\epsilon)\,\big) \pmod{\frac{\pi^s}{\pi_1\gamma_s}P_s}. 
\end{equation}
Suppose for the moment that 
\begin{equation}\label{condition-trace}
\text{Tr}_s\big(\,(\pi^s/(\pi_1\gamma_s))P_s\,\big)\subset \pi^nR_{\La}\ 
\mathrm{for}\ s\geq n+r + \log_q(e(\La/\Ka_r)). 
\end{equation}
Then taking $\text{Tr}_s$ in \eqref{trace-QL-t-s} we get
\begin{align*}
\text{Tr}_t\big(\, QL_t(\epsilon)\,\big)
&=\text{Tr}_s\big(\, \text{Tr}_{t/s}\big(\, QL_t(\epsilon)\, \big)\, \big)\\
&=\text{Tr}_s\left(\, QL_s\big(\,N_{t/s}(\epsilon)\,\big)\, \right) \pmod{\pi^nR_{\La}}.
\end{align*}
Thus, multiplying by $l_f(x)$, for $x\in F(\mu_{\La})$, and taking into account the definition of $R_{\La}$ 
(cf. Equation \eqref{dual-log}), we obtain for all $\epsilon\in K_d(\La_t)$ that
\begin{equation}\label{result-of-trac}
\mathbb{T}_{\La/K}\big(\,\text{Tr}_t\big(\, QL_t(\epsilon)\,\big)l_f(x)\,\big)
=\mathbb{T}_{\La/K}\big(\,\text{Tr}_s\left(\, QL_s\big(\,N_{t/s}(\epsilon)\,\big)\, \right)l_f(x)\,\big) \pmod{\pi^nC}.
\end{equation}
Combining \eqref{key-id-res} and \eqref{result-of-trac} we get 
 \[
(N_{\La_t/\La}(\epsilon),x)_{\La,n}^1=\mathbb{T}_{\La_s/K}(QL_s(N_{t/s}(\epsilon))l_f(x)) \pmod{\pi^nC}. 
 \]
 for all $\epsilon \in K_d(\La_t)$ and all $x\in F(\mu_{\La})$.

To finally  prove \eqref{phi-phi}  from the above identity, we now take an $\alpha\in K_d(\La_s)'$. In particular, from the definition of $K_d(\La_s)'$, we have  that $\alpha\in N_{t/s}(K_d(\La_t))$ 
for the aforementioned $t$, i.e., $t=2k+\varrho+1$ and $k$ sufficiently large; exactly as required in \cite{Florez} Theorem 5.3.1. Thus $\alpha=N_{t/s}(\epsilon)$ for some $\epsilon \in K_d(\La_t)$,
and hence we conclude that
\[
(N_{s}(\alpha),x)_{\La,n}^1=\mathbb{T}_{\La_s/K}(QL_s(\alpha)l_f(x)) \pmod{\pi^nC}, 
 \]
for $s\geq \max\{r', n+r+\log_q(e(\La/\Ka_r))\}$, for all $\alpha\in K_d(\La_{s})'$ and all $x\in F(\mu_{\La})$. This is exactly Equation \eqref{phi-phi}.

Thus, it  remains only to prove \eqref{condition-trace}.
To do this, let us take $x\in F(\mu_\La)$. From $f(x)\equiv x^{q} \pmod{\pi x}$ we obtain $f^{(m)}(x)\equiv x^{q^m} \pmod{\pi x}$  for all $m\geq 1$; recall that $f^{(m)}$ is the power series defined at the beginning of Section  \ref{Description of the results}. Therefore
\begin{equation}\label{valua}
v(f^{(m)}(x))\geq \min \{v(x^{q^m}),\ v(\pi x)\} \quad (\,\forall m\geq 1\,).
\end{equation}

Let 
\[
s'\geq \log_q\left( \frac{e(\La/\Ka)}{q-1} \right)+1=r+\log_q\left( e(\La/\Ka_r) \right),
\]
which is equivalent to
\begin{equation}\label{ineqqq}
\frac{q^{s'-1}}{e(\La/K)}\geq \frac{1}{q-1}.
\end{equation}
We  have from \eqref{valua} that $v(f^{(s'-1)}(x))\geq \min \{v(x^{q^{s'-1}}),\ v(\pi x)\}$.
On the other hand, the  Inequality \eqref{ineqqq} implies
\[v(x^{q^{s'-1}})=q^{s'-1}v(x)
=q^{s'-1}\,\frac{v_{\La}(x)}{\,e(\La/\Ka)\,\varrho\,}
\geq \frac{v_{\La}(x)}{(q-1)\varrho}\geq \frac{1}{(q-1)\varrho}=\frac{v(\pi)}{q-1},
\]
which combined with the obvious inequality 
 \[
 v(\pi x)> v(\pi)>\frac{v(\pi)}{q-1},
\]
yields $v(f^{(s'-1)}(x))\geq v(\pi)/(q-1)$; here we are using the fact that $v(\pi)=1/\varrho$. Applying again Equation (\ref{valua})  to $x=f^{(s'-1)}(x)$ ( and $m=1$) we obtain
\begin{align*}
v(f^{(s')}(x))
&\geq \min\ \left\lbrace v\left( \ ( f^{(s'-1)}(x))^{q}\ \right) ,\ v(\pi f^{(s'-1)}(x))\ \right\rbrace \\
&\geq \left( 1+\frac{1}{q-1}\right) v(\pi)> \frac{v(\pi)}{q-1}.
\end{align*}
Thus, by Lemma \ref{local-iso-log} below,   we have  
$v(l_f(f^{(s')}(x)))=v(f^{(s')}(x))$. Moreover, since  $v(\pi)/(q-1)=v(\pi_1)$ and $l_f\circ f^{(m)}=\pi^m\, l_f$ for all $m\geq 1$, then
$$v(\pi^{s'}l_f(x))=v(f^{(s')}(x))\geq \left(1+\frac{1}{q-1}\right)v(\pi)=v(\pi)+v(\pi_1).$$ 
This implies $(\pi^{s'-1}/\pi_1)T_{\La}\subset \mathcal{O}_{\La}$, bearing in mind that $T_{\La}=l_f(F(\mu_\La))$. Taking duals with respect to $\mathbb{T}_{\La/K}$, and recalling that $R_{\La}$ is the dual of $T_{\La}$ (cf. Equation \eqref{dual-log}), we get
\[
\frac{\pi^{s'-1}}{\pi_1}\frac{1}{D(\La/K)}\mathcal{O}_{\La}\subset R_{\La}.
\]
If we now take $s\geq s'$ then, keeping in mind that  $\text{Tr}_s(P_s)\subset P_{\La}$ (cf.  Equation (\ref{TrazaP_t})),  we see from the above containment  that
$\text{Tr}_s$ takes 
\[
\frac{\pi^s}{\pi_1\gamma_s}P_{s}\subset \frac{\pi^s}{\pi_1\pi_\La}P_{s}
\]
to 
\[
 \frac{\pi^s}{\pi_1\pi_\La}P_{\La}=\frac{\pi^s}{\pi_1^2\pi_\La}\frac{1}{D(\La/K)}\mathcal{O}_{\La}\subset \frac{\pi^{s-s'+1}}{\pi_1\pi_\La} R_{\La}.
\]
Finally, observing  that $q\neq 2$ ( since $p\neq 2$) implies $\pi_1\pi_\La|\pi$, because $$v_{\La}(\pi/(\pi_1\pi_\La))=e(\La/K)-e(\La/K_1)-1=e(\La/K_1)(e(K_1/K)-1)-1=e(\La/K_1)(q-2)-1$$ 
and $e(K_1/K)=q-1$, we conclude that if $s\geq n+s'$ then $\text{Tr}_s$ takes $(\pi^s/(\pi_1\gamma_s))P_s$ to $\pi^nR_{\La}$. This proves \eqref{condition-trace} and hence the theorem in its entirety.
\end{proof}

\begin{lemma}\label{local-iso-log}
The formal logarithm $l_f$ is a local  isomorphism for the subgroup of $F(\mu_L)$ 
of elements $w$ such that
\[
v(w)>1/(\varrho(q-1))=v(\pi)/(q-1),
\]
and furthermore for these elements we have that
\[
v(l_f(w))=v(w).
\]
\end{lemma}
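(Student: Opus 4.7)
The plan is to exploit the explicit structure of the Lubin--Tate logarithm $l_F(X) = X + \sum_{n \geq 2} a_n X^n \in K[[X]]$, for which the coefficients satisfy the classical denominator bound
\[
v(a_n) \geq -\lfloor \log_q n \rfloor\, v(\pi) \qquad (n \geq 1).
\]
This bound follows by induction on $n$ from the functional equation $l_F(f(X)) = \pi\, l_F(X)$, combined with the Lubin--Tate conditions $f(X) \equiv \pi X \pmod{\deg 2}$ and $f(X) \equiv X^q \pmod{\pi}$: the latter forces the coefficients of $X^m$ in $f(X)$ for $1 \leq m < q$ to lie in $\pi \OK$, and comparing coefficients of $X^N$ on both sides of the functional equation extracts the desired denominator control on $a_N$ (with $v(a_{q^k}) = -k\,v(\pi)$ in the extreme case).

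With this bound in hand, the substantive step is to verify that $v(a_n w^n) > v(w)$ for every $w \in \mu_L$ with $v(w) > v(\pi)/(q-1)$ and every $n \geq 2$, since this simultaneously guarantees termwise convergence of $l_F(w)$ in $L$ and shows that the linear term strictly dominates, yielding $v(l_F(w)) = v(w)$. I would split into two cases. For $2 \leq n < q$ the bound gives $v(a_n) \geq 0$, hence $v(a_n w^n) \geq n\, v(w) > v(w)$ because $v(w) > 0$. For $n \geq q$, set $k = \lfloor \log_q n \rfloor \geq 1$, so $n \geq q^k$; the desired inequality $(n-1)\, v(w) > k\, v(\pi)$ is implied by $v(w) > k\, v(\pi)/(q^k - 1)$, and a short calculation shows that the function $k \mapsto k/(q^k - 1)$ is strictly decreasing for $k \geq 1$, attaining its maximum $1/(q-1)$ at $k = 1$. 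Thus the hypothesis $v(w) > v(\pi)/(q-1)$ is exactly what is needed.

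For the local isomorphism assertion, I would invoke the inverse series $\exp_F = l_F^{-1} \in K[[X]]$, whose coefficients admit entirely analogous denominator bounds (either by inverting the series recursively or by applying the same argument to the functional equation $\exp_F(\pi X) = f(\exp_F(X))$). The resulting estimate shows that $\exp_F$ converges and preserves valuations on $\{z : v(z) > v(\pi)/(q-1)\}$, providing a two-sided inverse to $l_F$ on the subgroup in question, which together with the valuation identity established above gives the full statement. The only real obstacle is the coefficient bound on $a_n$; everything else is a careful but routine valuation calculation.
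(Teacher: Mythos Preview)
Your argument for the valuation identity $v(l_F(w)) = v(w)$ via the coefficient bound $v(a_n)\geq -\lfloor\log_q n\rfloor\,v(\pi)$ is correct and is the classical direct route. The paper, however, takes a different path: rather than estimating coefficients, it observes that $v(w)>v(\pi)/(q-1)$ forces $v(f(w))=v(\pi w)$ (from $f(X)\equiv X^q\pmod{\pi X}$), iterates to get $v(f^{(n)}(w))=v(\pi^n w)$, chooses $n$ large enough that $v(f^{(n)}(w))>1/(p-1)$, invokes the standard fact (Silverman, \emph{Advanced Topics}, IV Thm.~6.4) that $l_F$ preserves valuations on that smaller disk, and then uses $l_F\circ f^{(n)}=\pi^n l_F$ to descend. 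Your approach is more self-contained; the paper's is shorter but leans on an external reference for the base case.

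One caution on your treatment of $\exp_F$: the phrase ``entirely analogous denominator bounds'' is not quite right if read literally. The bound $v(b_n)\geq -\lfloor\log_q n\rfloor\,v(\pi)$ \emph{fails} for the exponential in general---already for $F=\mathbb{G}_m$ over $\Qp$ one has $b_{p^2}=1/(p^2)!$ with $v(b_{p^2})=-(p+1)<-2$. What the functional equation $\exp_F(\pi X)=f(\exp_F(X))$ actually yields by induction is the weaker bound $v(b_n)\geq -(n-1)\,v(\pi)/(q-1)$, and this is exactly what you need: for $v(w)>v(\pi)/(q-1)$ it gives $v(b_n w^n)-v(w)\geq (n-1)\bigl(v(w)-v(\pi)/(q-1)\bigr)>0$. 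So the strategy is sound, but you should state the correct (weaker) estimate for the exponential rather than asserting the logarithm's bound carries over unchanged.
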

\begin{proof}
The inequality $v(w)>v(\pi)/(q-1)$ is equivalent to 
$v(w^q)>v(\pi)+v(w)$, and since 
$f(x)\equiv \pi x+x^q \pmod{\pi x^2}$ this implies  
$v(f(w))=v(\pi w)$. Thus $v(f^{(n)}(w))=v(\pi^n w)$ 
and therefore we can take $n$ large enough 
such that $v(f^{(n)}(w))>1/(p-1)$. By 
Proposition 6.2.3 of \cite{Florez} or \cite{silver} IV Theorem 6.4 we have 
that $v(l_f(f^{(n)}(w)))=v(f^{(n)}(w))$. Thus $v(\pi^nl_f(w))=v(\pi^n\, w)$; 
recalling that $l_f\circ f^{(n)}=\pi^nl_f$. This proves the lemma.
\end{proof}

\section{Generalized Artin-Hasse and Iwasawa formulas}\label{Comparison of formulas}
Let $\pi$ be a uniformizer of $K$ and let $\varrho$ is the ramification of index of $K/\Qp$. Fix an $f\in \Lambda_{\pi}$ and let $F_{f}$ denote the Lubin-Tate formal group associated to $f$. We will use the following notation in the rest of this article. Let $\Ka=K\TT$ and set  $\Ka_{\pi,n}=K_{\pi,n}\TT$
for $n\geq 1$. For $\La=\Ka_{\pi,n}$ we will use the notation \eqref{notational}. The main results in this section are Corollary \ref{Lemma-Ev-torison-3} and Theorem \ref{Iwasawa-General} which are a refinement of Theorem \ref{Lubinazo} for the field $\Ka_{\pi,n}$.  Corollary \ref{Lemma-Ev-torison-3} is a generalization to Lubin-Tate formal groups of the generalized Artin-Hasse formula  of Zinoviev (cf. \cite{Zinoviev} Corollary 2.1). On the other hand, Theorem \ref{Iwasawa-General} is a generalization  to Lubin-Tate formal groups of the explicit reciprocity laws  of Zinoviev (cf. \cite{Zinoviev} Theorem 2.2 ) and  Kurihara (cf. \cite{Kurihara} Theorem 4.4) for the generalized Hilbert symbol for the field $\Ka_{\pi,n}$.

\subsection{A refinement of Theorem \ref{Lubinazo}}
We start with the following refinement of Theorem \ref{Lubinazo} which will be used later in the deduction of Theorem \ref{Iwasawa-General}.

\begin{thm}\label{Partial-Iwasawa-formulas}
Let $\La=\Ka_{\pi,n}$. The following identity holds:
\begin{equation}\label{eq-Partial-Iwasawa-formulas}
(\alpha, x)_{\La,n}= \left[\,\Tr  \big(\,QL_n(\alpha)\,l_f(x)\,\big)\,\right]_f(e_{f,n})
\end{equation}
for all $\alpha \in K_d(\La)'$ and 
all $x\in F(\mu_{\La})$ such that $v_{\La}(x)\geq 2\,v_{\La}(p)/(\varrho\,(q-1))+1$.
\end{thm}
\begin{proof}
First of all, notice that in this case $n=r=r'$; for $r$ and $r'$ as in Section \ref{The canonical logarithmic derivatives}. Borrowing the notation from Theorem  \ref{Lubinazo}, we have first from
Equation \eqref{key-id-res}  that for all $\epsilon\in K_d(\La_t)$ and all $x\in F(\mu_{\La})$
\begin{equation}\label{nece}
(N_{\La_t/\La}(\epsilon),x)_{\La,n}^1=\mathbb{T}_{\La_t/K}(QL_t(\epsilon)l_f(x)) \pmod{\pi^nC}, 
\end{equation}
 and from \eqref{trace-QL-t-s} (applied to $s=r'=n$) that
\begin{equation}\label{trazalll}
QL_n\big(\,N_{t/n}(\epsilon)\,\big)= \normalfont \text{Tr}_{\La_t/\La}\big(\,QL_t(\epsilon)\,\big) \pmod{\frac{\pi^n}{\pi_1\gamma_n}P_n};
\end{equation}
taking into account that $\text{Tr}_{t/n}=\text{Tr}_{\La_t/\La} $ since $\La_n=\La$. 

Suppose for the moment that
\begin{equation}\label{duali}
v_{\La}(x)\geq  2v_{\La}(p)/(\varrho(q-1))+1
\quad \Longrightarrow \quad 
\Tr\left(\,\left(\frac{\pi^n}{\pi_1\gamma_n}P_n\right)\, l_f(x)\,\right)\subset \pi^n C.
\end{equation}
Then  multiplying both sides of \eqref{trazalll} by $l_f(x)$, for $x\in F(\mu_{\La})$ such that 
$v_{\La}(x)\geq  2v_{\La}(p)/(\varrho(q-1))+1$, and  
then  applying $\Tr$ we obtain from \eqref{duali} that
\[
\Tr\big(\,QL_n\big(\,N_{t/n}(\epsilon)\,\big)\cdot l_f(x)\,\big)= \mathbb{T}_{\La_t/K}\big(\,QL_t(\epsilon)\cdot l_f(x)\,\big) \pmod{\pi^nC}.
\]
Therefore this, combined with \eqref{nece}, implies, for all   $x\in F(\mu_{\La})$ such that 
$v_{\La}(x)\geq  2v_{\La}(p)/(\varrho(q-1))+1$, that
\begin{equation}\label{clave-id}
(N_{\La_t/\La}(\epsilon),x)_{\La,n}^1=\Tr\big(\,QL_n\big(\,N_{t/n}(\epsilon)\,\big)\cdot l_f(x)\,\big) \pmod{\pi^nC}.
\end{equation}
 To finally prove the identity, take $\alpha \in K_d(\La)'=\cap_{m\geq n}K_d(\La_m)$.
 In particular, there exists an $\epsilon\in K_d(\La_t)$ such that $\alpha=N_{\La_t/\La}(\epsilon)$; recalling that  $t$ is as in Theorem \ref{Lubinazo}. Therefore applying the identity \eqref{clave-id} to $\alpha=N_{\La_t/\La}(\epsilon)$ we obtain 
\[
(\alpha, x)_{\La,n}^1=\Tr  \big(\,QL_n(\alpha)\,l_f(x)\,\big)\pmod{\pi^nC},
\]
for all $\alpha\in K_d(\La)'$ and all $x\in F(\mu_{\La})$ such that 
$v_{\La}(x)\geq  2v_{\La}(p)/(\varrho(q-1))+1$. Bearing in mind the notation from \eqref{notation-kummer-pairing}, this yields \eqref{eq-Partial-Iwasawa-formulas}.
 
It remains only to show \eqref{duali}. This follows by noticing first that $(\pi_1^2\gamma_n)\OLa$ if and only if  $v_{\La}(x)\geq  2v_{\La}(p)/(\varrho(q-1))+1$. Second, by noticing that
 $(\pi_1^2\gamma_n)\OLa$
is the dual of $$\frac{\pi^n}{\pi_1\gamma_n}P_{\La}=\frac{\pi^n}{\pi_1^2\gamma_n}\cdot\frac{1}{D(\La/K)}$$ with respect to the pairing $(y,w)\to \Tr(yw)$ mod $\pi^nC$, and finally by observing that 
$x\in (\pi_1^2\gamma_n)\OLa$ if and only if $l_f(x)\in (\pi_1^2\gamma_n)\OLa$,  by
 Lemma \ref{local-iso-log}.
 
\end{proof}

Our main goal is to extend the above theorem to all $\alpha\in K_d(\La)$.  Here we are going to follow Kolyvagin's ideas in \cite{koly} Section 7.4. The crux of Kolyvagin's argument is to vary the uniformizer of $K$ by taking $\xi=u\pi$, for $u\equiv1 \pmod{\pi^n}$, and study how the formulas are transformed when we consider Lubin-Tate formal groups associated to a power series $g\in \Lambda_{\xi}$. Additionally, we need to modify the logarithmic derivative $QL_n$ ( cf. Definition \ref{lift}) so that it is defined  $\text{mod}\,(\pi^n/\pi_1)P_n$ rather than $\text{mod}\,(\pi^n/(\pi_1\,\gamma_n)P_n$. This will allow us to deal with the restriction on $x\in F(\mu)$ in Theorem \ref{Partial-Iwasawa-formulas} from $v_{\La}(x)\geq 2\,v_{\La}(p)/(\varrho\,(q-1))+1$ to $v_{\La}(x)\geq 2\,v_{\La}(p)/(\varrho\,(q-1))$ (cf. Corollary  \ref{Iwa-Id-Wiles group}).

\subsection{Varying the uniformizer of $K$}\label{Varying the uniformizer of $K$}

Let $\xi$ be a uniformizer of $K$.  For $g\in \Lambda_{\xi}$ we let $F_g$ denote the formal group associated to $g$. We will add a $g$ to the subscript of all the elements associated to the formal group $F_g$. In this way,
 the corresponding formal logarithm, torsion points, $n$th torsion group, Kummer pairing, logarithmic derivatives, etc., will be denoted by $l_g$, $e_g$, $e_{g,n}$, $\kappa_{g,n}$, $(,)_{g,\La,n}$, $QL_{g,s}$, etc., respectively.  Recall, from  analogous  considerations made in Section \ref{Description of the results}, that we are assuming  $e_g$ is a generator of $\kappa_{g}:=\varprojlim \kappa_{g,n}$ ($\simeq C$) as $C$-modules and $e_{g,n}$ is the restriction of $e_g$ to $\kappa_{g,n}$. This implies, in particular, that $g(e_{g,m+1})=e_{g,m}$ for all $m\geq 1$.
 Since we already fixed the power series $f\in \Lambda_{\pi}$, sometimes we will omit the subscript $f$ in this case.

 In this subsection we will collect some notation and 
 results that will inform us about the  passage 
 from the formal group $F_f$ to the formal group $F_g$.

Let $K_{\xi,n}=K(\kappa_{g,n})$. We start with the following result of Kolyvagin (cf. \cite{koly} Proposition 7.25).

\begin{prop}\label{importa}
$K_{\xi,n}=K_{\pi,n}$ if and only if $u\equiv 1 \pmod{ \pi^n}$, where $u=\xi/\pi$.
\end{prop}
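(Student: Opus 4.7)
The plan is to reduce the claim to local class field theory by comparing the norm subgroups of $K^\ast$ associated to the finite abelian totally ramified extensions $K_{\pi,n}/K$ and $K_{\xi,n}/K$. Both sit inside the common Lubin--Tate extension $K_{\pi,\infty}=K_{\xi,\infty}$ (independence of the Lubin--Tate tower on the uniformizer is classical), and both have degree $q^n-q^{n-1}$ by the isomorphism \eqref{Lubin-Tate-isom}.

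The key step I would carry out is to verify that for any uniformizer $\varpi$ of $K$ and any $h\in\Lambda_\varpi$, the norm subgroup satisfies
\[
N_{K_{\varpi,n}/K}(K_{\varpi,n}^\ast)=\varpi^{\mathbb{Z}}\cdot U_K^{(n)},\qquad U_K^{(n)}:=1+\varpi^n\OK.
\]
This combines three inputs: (i) the index of the norm subgroup in $K^\ast$ equals $|(\OK/\varpi^n\OK)^\ast|=q^n-q^{n-1}$ by \eqref{Lubin-Tate-isom}; (ii) $\varpi$ itself is a norm from $K_{\varpi,n}$, which is the defining normalization of Artin reciprocity in Lubin--Tate theory; and (iii) $U_K^{(n)}$ is then automatically contained in the norm subgroup, since $\varpi^{\mathbb{Z}}\cdot U_K^{(n)}$ already has index $q^n-q^{n-1}$ in $K^\ast$, matching (i).

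Applying this in parallel to $\pi$ and $\xi=u\pi$, the Galois correspondence of local class field theory gives $K_{\pi,n}=K_{\xi,n}$ iff $\pi^{\mathbb{Z}}\cdot U_K^{(n)}=\xi^{\mathbb{Z}}\cdot U_K^{(n)}$. Since both uniformizers have valuation one, this equality reduces to the single condition $\xi/\pi=u\in U_K^{(n)}$, i.e., $u\equiv 1\pmod{\pi^n}$, which is exactly what is claimed.

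The main expected obstacle is pinning down step (ii), namely that $\varpi$ lies in the norm group of $K_{\varpi,n}/K$; equivalently, that $\theta_K(\varpi)$ acts trivially on $K_{\varpi,\infty}$. This is the Lubin--Tate normalization of the local reciprocity law and is precisely what makes the formal group $F_h$ correspond to the uniformizer $\varpi$. It may either be cited from \cite{cassels} or extracted from the construction of $\tau$ used earlier in the paper; beyond this normalization check I do not anticipate further difficulty.
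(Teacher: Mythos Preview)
Your argument via the norm-group computation $N_{K_{\varpi,n}/K}(K_{\varpi,n}^\ast)=\varpi^{\mathbb{Z}}\cdot U_K^{(n)}$ and the existence theorem of local class field theory is correct and is the standard route; the paper itself does not supply a proof but simply cites \cite{koly} Proposition 7.25, whose argument is essentially the one you sketch. Your steps (i)--(iii) are sound, and step (ii) can either be taken from Lubin--Tate theory as you suggest or verified directly (for $h\in\Pi\Lambda_\varpi$ the minimal polynomial of $e_{h,n}$ over $K$ is $h^{(n)}/h^{(n-1)}$ with constant term $\varpi$, so $N_{K_{\varpi,n}/K}(e_{h,n})=\varpi$ for $p$ odd; the paper uses this very computation in the proof of Theorem \ref{Iwasawa-General}).

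One correction, however: your parenthetical assertion that $K_{\pi,\infty}=K_{\xi,\infty}$ is false in general. The totally ramified Lubin--Tate tower genuinely depends on the uniformizer; what is independent of the uniformizer is only the compositum $K_{\varpi,\infty}\cdot K^{nr}=K^{ab}$. Indeed, $K_{\varpi,\infty}$ is the fixed field of $\theta_K(\varpi)$ inside $K^{ab}$, and $\theta_K(\pi)\neq\theta_K(\xi)$ when $u\neq 1$. Fortunately this claim plays no role in your actual proof: the degree equality $[K_{\varpi,n}:K]=q^n-q^{n-1}$ follows directly from \eqref{Lubin-Tate-isom} applied to each uniformizer separately, and the norm-group comparison does not require the two towers to coincide at any level beyond the one under discussion. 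Simply delete the parenthetical and the argument stands.
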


Now let $R$ be a ring containing $C=\mathcal{O}_K$. Denote by $\text{Hom}_R(F_f,F_g)$ be the ring of power series $t(X) \in XR[[X]]$ such that $$t(F_f(X,Y))=F_g(t(X),t(Y)).$$
According to \cite{koly} Proposition 1.1 we have an injective ring homomorphism $c: \text{Hom}_R(F_f,F_g)\to R$ defined by $t(X)=t_1X+\cdots\mapsto c(t):=t_1$. For $a\in c(\text{Hom}_R(F_f,F_g))$
we denote by $[a]_{f,g}(X)$ the power series such that $c([a]_{f,g})=a$.

Let $u$ be a principal unit of $K$. According to \cite{koly} Proposition 7.1 and Corollary 7.5 there exists a unit $\epsilon\equiv 1 \pmod{(u-1)}$  in  $\hat{C}_{nr}$, the ring of integers of the completion of the maximal unramified extension  $K^{nr}$ of $K$, such that  
\[
\text{Fr}_K(\epsilon)=\epsilon u \quad \text{and} \quad [\epsilon]_{f,g}\in \text{Hom}_{\hat{C}_{nr}}(F_f,F_g).
\] 
Therefore the power series $[\epsilon]_{f,g}(X)$
 is an isomorphisms between $F_f$ and $F_g$ over $\hat{C}_{nr}$ and
 \begin{equation*}
[\epsilon]_{f,g}^{ \text{Fr}_{K}}=[\epsilon\,u]_{f,g}.
 \end{equation*}

Suppose further that the unit $u$ is such that $u\equiv 1 \pmod{\pi^n}$. Let $L=K_{\pi,n}$. Observing that $\text{Fr}_{L}=\text{Fr}_K$ is the Frobenius automorphism of $L^{nr}$ over $L$, then $\text{Fr}_{L}(\epsilon)= \epsilon u \equiv \epsilon \pmod{\pi^n}$.

Let $\La=\Ka_{\pi,n}$. Using the notation from the proof of Lemma \ref{one-two-connec} we define, according to Fesenko c.f \cite{Zhukov} Section 6.4.1., the \textit{valuation} map $\mathfrak{v}_{\La}$  by
\[
\mathfrak{v}_{\La}:\,K_d(\La)\ \xrightarrow{\ \partial\ }\ K_{d-1}(\La_{(d-1)})\ \xrightarrow{\ \partial \ } \cdots \xrightarrow{\ \partial\ }\  K_0(\La_{(0)})=\Z.
\]
Observe, for example, that $\mathfrak{v}_{\La}(\{T_1,\dots, T_{d-1},\pi_{\La}\})=1$. For $\alpha\in K_d(\La)$ the Galois element  $\Upsilon_{\La}(\alpha)\in$\text{ Gal}$(\La^{ab}/\La)$ can be restricted to Gal$(L^{nr}/L)$. According to the commutative diagram from the proof of Lemma \ref{one-two-connec} this restriction coincides with $\text{Fr}_{L}^{\mathfrak{v}_{\La}(\alpha)}$, thus
\begin{equation}\label{Frob-higher}
\Upsilon_{\La}(\alpha)(\epsilon)=\epsilon\,u^{\mathfrak{v}_{\La}(\alpha)}\quad \text{and} \quad  [\epsilon]_{f,g}^{\Upsilon_{\La}(\alpha)}=[\epsilon\,u^{\mathfrak{v}_{\La}(\alpha)}]_{f,g}.
\end{equation}

\subsection{Generalized Artin-Hasse formulas}
In this section we will prove a couple of results that can be regarded as a generalization of the
 Artin-Hasse  formula (cf. Equation  \eqref{Hasselin}) for the Kummer pairing associated to a higher local field. In particular, we will deduce the formulas of Zinoviev  \cite{Zinoviev} Corollary 2.1.

Denote by $\Pi\Lambda_{\xi}$  the subset of $\Lambda_{\xi}$  consisting of polynomials of degree $q$  having leading coefficient 1.  The Lubing-Tate formal group associated to a $g\in \Pi\Lambda_{\xi}$ have further interesting properties as can be stated in the following result of Kolyvagin.

\begin{prop}
Let $g\in \Pi\Lambda_{\xi}$. For $m>0$ and $p$ an odd prime we have 
\begin{equation}\label{norm-torsion}
\prod_{v\in \kappa_{g,m}}F_g(X,v)=(-1)^{p-1}g^{(m)}(X)=g^{(m)}(X)
\end{equation}
and 
\begin{equation}\label{trace-torsion}
\sum_{v\in \kappa_{g,m}} 
      \frac{1}{\xi^{n+m}\,l'_g(F_g(e_{g,n+m},v))\,F_g(e_{g,n+m},v)}
   =
   \frac{1}{\xi^n\,l'_g(e_{g,n})\,e_{g,n}}\, .
\end{equation}
Moreover, let $\La_m=K_{\xi,m}\TT$. Then for $t\geq s\geq 1$ we have
\begin{equation}\label{real-norm-torsion}
N_{\La_t/\La_s}(e_{g,t})=e_{g,s}
\end{equation}
and 
\begin{equation}\label{real-trace-torsion}
\normalfont \text{Tr}_{\La_t/\La_s} \left(\frac{1}{\,\xi^t\,l'_g(e_{g,t})\,e_{g,t}\,} \right)=\frac{1}{\,\xi^s\,l'_g(e_{g,s})\,e_{g,s}\,}\, .
\end{equation}
\end{prop}
\begin{proof}
For Equation \eqref{norm-torsion} see \cite{koly} Proposition 7.21, and for Equation \eqref{trace-torsion} see Equation TR1: 7.4.

We now prove the remaining part of the proposition. 
First, recall from Subsection \ref{Varying the uniformizer of $K$}  that the torsion points $e_{g,m}\in \kappa_{g,n}$ were chosen to be compatible, i.e., $g(e_{g,m+1})=e_{g,m}$ for $m\geq 1$. Now let $m=t-s$. We claim that as $\sigma$ 
runs through all the elements of $\text{Gal}(\La_t/\La_s)$, then $\sigma(e_{g,t})\ominus_{g}e_{g,t}$ runs through all the elements of $\kappa_{g,m}$; where $\ominus_g$ denotes subtraction in the formal group $F_g$.
Indeed, this is true since $\La_t=\La_s(e_{g,t})$, and $[\La_t:\La_s]$ and $|\kappa_{g,m}|$ are both equal to $q^m$, as it is stated in the introduction of Section \ref{Canonical derivations and logarithmic derivatives}. Therefore by \eqref{norm-torsion}
\[
N_{\La_t/\La_s}(e_{g,t})
=\prod_{\sigma\in \text{Gal}(\La_t/\La_s)} \sigma(e_{g,t})
=\prod_{v\in \kappa_{g,m}}F_g(e_{g,t},v)=g^{(m)}(e_{g,t})=e_{g,s},
\]
and by \eqref{trace-torsion}
\begin{align*}
\normalfont \text{Tr}_{\La_t/\La_s} \left(\frac{1}{\,\xi^t\,l'_g(e_{g,t})\,e_{g,t}\,} \right)
&=\sum_{\sigma\in \text{Gal}(\La_t/\La_s)}\frac{1}{\,\xi^t\,l'_g(\sigma(e_{g,t}))\,\sigma(e_{g,t})\,}\\
&=\sum_{v\in \kappa_{g,m}}\frac{1}{\,\xi^t\,l'_g(F_g(e_{g,t},v))\,F_g((e_{g,t},v)\,} \\
&=\frac{1}{\,\xi^s\,l'_g(e_{g,s})\,e_{g,s}\,}.
\end{align*}
\end{proof}

These useful identities will give us more precise results for Kummer pairings of  Lubin-Tate formal groups associated to polynomials $g\in \Pi\Lambda_{\xi}$. Indeed, we have the following result. 
\begin{prop}\label{Lemma-Ev-torison}
Let $g\in \Pi\Lambda_{\xi}$ for $\xi$ a uniformizer of $K$. Let $e_{g,n}$ be a generator of $\kappa_{g,n}$ and let $\La= K_{\xi,n}\TT$. Then the following identity holds
\[\,
\big(\,\{u_1,\dots, u_{d-1},e_{g,n}\}\,,\,x\,\big)_{g,\La,n}
=\left[ 
\Tr\left(\,\frac{\det \left[\frac{\partial u_i}{\partial T_j}\right]}{u_1\cdots u_{d-1}}\,\frac{\Tdc}{\,\xi^n\,l'_g(e_{g,n})\,e_{g,n}\,}\, l_g(x)\,\right)\,
\right]_g (e_{g,n})
\]
for all units $u_1,\dots, u_{d-1}$ of  $\La$ and  all $x\in F_g(\mu_{\La})$.
\end{prop}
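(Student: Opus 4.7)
The plan is to deduce the formula by applying Theorem \ref{Lubinazo} (in its $g$-version) to a lifted symbol at a deeper level, and then descend \emph{exactly} via the trace identity \eqref{trace-torsion}. Since $\La=\Ka_{\xi,n}$ satisfies $r=r'=n$ and $e(\La/\Ka_{\xi,n})=1$, any $s\ge 2n$ meets the hypothesis of Theorem \ref{Lubinazo}. Set $\La_s=K_{\xi,s}\TT$ and $\alpha'=\{u_1,\dots,u_{d-1},e_{g,s}\}\in K_d(\La_s)$. Evaluating \eqref{norm-torsion} at $X=e_{g,t}$ gives $N_{K_{\xi,t}/K_{\xi,s}}(e_{g,t})=g^{(t-s)}(e_{g,t})=e_{g,s}$ for every $t\ge s$, and the projection formula for the norm on Milnor $K$-groups then yields $\alpha'=N_{t/s}(\{u_1,\dots,u_{d-1},e_{g,t}\})$ as well as $N_s(\alpha')=\alpha$. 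In particular $\alpha'\in K_d(\La_s)'$, and Theorem \ref{Lubinazo} produces, for every $x\in F_g(\mu_\La)$,
\[
(\alpha,x)_{g,\La,n}=\bigl[\,\mathbb{T}_{\La/K}\bigl(\,\text{Tr}_{s}\!\bigl(QL_s(\alpha')\,l_g(x)\bigr)\bigr)\,\bigr]_g(e_{g,n}).
\]

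Second, I compute $QL_s(\alpha')$ in closed form from the definitions \eqref{QL} and \eqref{qmsl}. Since $e_{g,s}$ is a uniformizer of $\La_s$ and $w_{g,s}=l'_g(e_{g,s})\,de_{g,s}$, the coefficient $b'$ in \eqref{qmsl} equals $l'_g(e_{g,s})$. The matrix of partial derivatives of $(u_1,\dots,u_{d-1},e_{g,s})$ with respect to the uniformizers $(T_1,\dots,T_{d-1},e_{g,s})$ has last row $(0,\dots,0,1)$, so its $d\times d$ determinant collapses to the $(d-1)\times(d-1)$ minor of partials $\partial u_i/\partial T_j$ for $1\le i,j\le d-1$. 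This gives
\[
QL_s(\alpha')=\frac{T_1\cdots T_{d-1}\,\det\!\bigl[\tfrac{\partial u_i}{\partial T_j}\bigr]_{1\le i,j\le d-1}}{u_1\cdots u_{d-1}\,\xi^s\,l'_g(e_{g,s})\,e_{g,s}},
\]
and the entire $T$-dependent prefactor already lies in $\La$.

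The crux is now the exact trace identity \eqref{trace-torsion}. Because the set $\{F_g(e_{g,s},v):v\in\kappa_{g,s-n}\}$ is precisely the orbit of $e_{g,s}$ under $\mathrm{Gal}(K_{\xi,s}/K_{\xi,n})$, the left-hand side of \eqref{trace-torsion} (with indices $n$ and $s-n$) is literally $\text{Tr}_{K_{\xi,s}/K_{\xi,n}}\bigl(1/(\xi^s\,l'_g(e_{g,s})\,e_{g,s})\bigr)$, which coincides with the action of $\text{Tr}_{s/n}$ on that scalar. Therefore \eqref{trace-torsion} yields, with no mod-error,
\[
\text{Tr}_{s/n}(QL_s(\alpha'))=\frac{T_1\cdots T_{d-1}\,\det\!\bigl[\tfrac{\partial u_i}{\partial T_j}\bigr]_{1\le i,j\le d-1}}{u_1\cdots u_{d-1}\,\xi^n\,l'_g(e_{g,n})\,e_{g,n}},
\]
which is exactly the integrand appearing on the right-hand side of the proposition. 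Since $\text{Tr}_s=\text{Tr}_{s/n}$ and $l_g(x)\in\La$, substituting back into the formula above and pulling $l_g(x)$ out of $\text{Tr}_{s/n}$ yields the desired identity.

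The main obstacle I anticipate is precisely the exactness of the trace descent: a generic $x\in F_g(\mu_\La)$ may have $v_\La(x)$ much smaller than the threshold $2v_\La(p)/(\varrho(q-1))$ demanded by Theorem \ref{Partial-Iwasawa-formulas}, so one cannot afford to pick up any mod-$(\xi^n/\xi_1 e_{g,n})P_n$ error in descending $QL_s$ to $QL_n$. The mechanism that circumvents this is the polynomiality assumption $g\in\Pi\Lambda_\xi$, which supplies the closed-form identity \eqref{trace-torsion} and upgrades the congruence of Proposition \ref{levelQL} to a genuine equality on the relevant scalars; this is what allows the statement to hold without any valuation assumption on $x$.
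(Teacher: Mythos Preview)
Your proof is correct and follows essentially the same route as the paper's own argument: lift the symbol to level $s$ via the norm compatibility \eqref{norm-torsion}, verify membership in $K_d(\La_s)'$, apply Theorem \ref{Lubinazo}, compute $QL_s$ explicitly using $e_{g,s}$ as uniformizer, and descend exactly through the trace identity \eqref{trace-torsion}. Your added details (the computation $r=r'=n$, the block shape of the Jacobian, and the explicit identification of the Galois orbit with the sum in \eqref{trace-torsion}) merely flesh out what the paper compresses into ``the result is a consequence of Theorem \ref{Lubinazo} and its proof \ldots\ taking $s$ large enough''; your closing remark on why exactness of \eqref{trace-torsion} is what removes the valuation constraint on $x$ is precisely the point.
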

\begin{proof}
Let $\La_s=\La(\kappa_{g,s})$  for $s\geq n$. First,  we have from \eqref{real-norm-torsion} that $N_{\La_t/\La_s}(e_{g,t})=e_{g,s}$ for all $t\geq s\geq n$. Therefore 
\[
N_{\La_t/\La_s}(\{u_1,\dots,e_{g,t}\})=\{u_1,\dots,N_{\La_t/\La_s}(e_{g,t})\}=\{u_1,\dots,e_{g,s}\}.
\] 
This implies, in particular, that $N_{\La_s/\La}(\{u_1,\dots,e_{g,s}\})=\{u_1,\dots,e_{g,n}\}$
and also that $\{u_1,\dots,e_{g,s}\}$ belongs to $K_d(\La_s)':=\cap_{t\geq s}N_{\La_t/\La_s}( K_d(\La_s))$. On the other hand, by the definition of $QL_{g,s}$ we have
\[
QL_{g,s}(u_1,\dots, u_{d-1},e_{g,s})=\frac{\det \left[\frac{\partial u_i}{\partial T_j}\right]}{u_1\cdots u_{d-1}}\,\frac{\Tdc}{\,\xi^s\,l'_g(e_{g,s})\,e_{g,s}\,}\ .
\]
Moreover, from \eqref{real-trace-torsion} it follows that
\[
\text{Tr}_{\La_s/\La} \left(\frac{1}{\,\xi^s\,l'_g(e_{g,s})\,e_{g,s}\,} \right)=\frac{1}{\,\xi^n\,l'_g(e_{g,n})\,e_{g,n}\,}\, .
\]

From these observations the result follows now from Theorem \ref{Lubinazo} applied  to the formal group $F_g$
and the element $\alpha=\{u_1,\dots,e_{g,s}\}$, by taking $s$ large enough (as required in the statement of that theorem).
\end{proof}

If we take $\xi=\pi$ in the above proposition and notice that, by  \cite{koly} Corollary 7.5, the power series $t(X)=[1]_{f,g}(X)$
is an isomorphisms ( over $C$) between the formal groups $F_f$ and $F_g$, we obtain the following result.
\begin{prop}\label{Lemma-Ev-torison-2}
Let $\La =\Ka_{\pi,n}$. Let $g\in \Pi\Lambda_{\pi}$ and $e_{g,n}=[1]_{f,g}(e_{f,n})$. Then the following identity holds
\[\,
\big(\,\{u_1,\dots, u_{d-1},e_{g,n}\}\,,\,x\,\big)_{\La,n}
=\left[ 
\Tr\left(\,\frac{\det \left[\frac{\partial u_i}{\partial T_j}\right]}{u_1\cdots u_{d-1}}\,\frac{\Tdc}{\,\xi^n\,l'_g(e_{g,n})\,e_{g,n}\,}\, l_f(x)\,\right)\,
\right]_f \,(e_{f,n})
\]
for all units $u_1,\dots, u_{d-1}$ of  $\La$ and  all $x\in F_f(\mu_{\La})$.
\end{prop}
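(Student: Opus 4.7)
The plan is to deduce this identity from Proposition \ref{Lemma-Ev-torison}, specialized to $\xi = \pi$, by transporting the formula from the $g$-Kummer pairing to the $f$-Kummer pairing via the $C$-isomorphism $t(X) = [1]_{f,g}(X)$ between $F_f$ and $F_g$. First, since $\xi = \pi$, the unit $u = \xi/\pi$ equals $1$, so Proposition \ref{importa} yields $K_{\xi,n} = K_{\pi,n}$, and consequently $\La = \Ka_{\pi,n}$ coincides with the field $K_{\xi,n}\TT$ appearing in Proposition \ref{Lemma-Ev-torison}. Direct application of that proposition therefore supplies the desired shape of identity, but with the $g$-Kummer pairing $(-,-)_{g,\La,n}$ on the left, with $l_g(y)$ on the right, and with the outer operation $[\,\cdot\,]_g(e_{g,n})$, valid for all $y \in F_g(\mu_\La)$.

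Next, I invoke \cite{koly} Corollary 7.5 to obtain $t \in X\,C[[X]]$, an isomorphism $F_f \to F_g$ over $C$ with linear term $X$. Three of its properties are needed: (i) $t(e_{f,n}) = e_{g,n}$ by hypothesis; (ii) $l_g \circ t = l_f$, since both power series start with $X$ and satisfy the same functional equation characterizing the formal logarithm; and (iii) $t \circ [a]_f = [a]_g \circ t$ for every $a \in C$, from which $t^{-1}\!\bigl([a]_g(e_{g,n})\bigr) = [a]_f(e_{f,n})$.

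The crucial intermediate step is the naturality identity
\[
(\alpha,\, t(x))_{g,\La,n} \;=\; t\bigl((\alpha,x)_{f,\La,n}\bigr), \qquad x \in F_f(\mu_\La),\ \alpha \in K_d(\La).
\]
From $t \circ [\pi]_f = [\pi]_g \circ t$ one obtains $g^{(n)} \circ t = t \circ f^{(n)}$, so if $f^{(n)}(z) = x$ then $g^{(n)}(t(z)) = t(x)$; and since $t$ has coefficients in $K$ it commutes with the Galois action of $\Upsilon_\La(\alpha)$. Therefore
\[
(\alpha, t(x))_{g,\La,n} = \Upsilon_\La(\alpha)(t(z)) \ominus_g t(z) = t\bigl(\Upsilon_\La(\alpha)(z) \ominus_f z\bigr) = t\bigl((\alpha,x)_{f,\La,n}\bigr),
\]
the middle equality using that $t$ is a formal group homomorphism $F_f \to F_g$.

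To finish, I set $y = t(x)$ in the formula supplied by Proposition \ref{Lemma-Ev-torison}; property (ii) replaces $l_g(t(x))$ by $l_f(x)$ on the right, and the naturality identity rewrites the left-hand side as $t\bigl((\alpha, x)_{f,\La,n}\bigr)$. Applying $t^{-1}$ to both sides and using (iii) to convert $[\,\cdot\,]_g(e_{g,n})$ into $[\,\cdot\,]_f(e_{f,n})$ yields exactly the claimed identity. No serious obstacle is anticipated; the only point requiring genuine care is the naturality identity, which amounts to the bookkeeping check that $t$ is $K$-rational and intertwines multiplication by $\pi$, everything else being mechanical.
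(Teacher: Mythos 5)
Your proposal is correct and follows essentially the same route as the paper: specialize Proposition \ref{Lemma-Ev-torison} to $\xi=\pi$ and transport the formula through the $C$-isomorphism $t=[1]_{f,g}$ via the naturality identity $t\big((\alpha,x)_{\La,n}\big)=(\alpha,t(x))_{g,\La,n}$, keeping Proposition \ref{importa} in mind. The only difference is that the paper obtains this naturality by citing \cite{Florez} Proposition 2.2.1(8), whereas you verify it directly from the definition of the Kummer pairing (using that $t$ is $C$-rational, intertwines $f^{(n)}$ with $g^{(n)}$, and satisfies $l_g\circ t=l_f$); both are sound.
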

\begin{proof}
Since $t(X)=[1]_{f,g}(X)$ is an isomorphisms (over $C$) between the formal groups $F_f$ and $F_g$ then, by
 \cite{Florez} Proposition 2.2.1.(8), we have  that $$t\big(\,(\alpha,x)_{\La,n}\,\big)=(\alpha,t(x))_{g,\La,n}.$$ The result follows now from Proposition \ref{Lemma-Ev-torison}; keeping Proposition \ref{importa} in mind.
\end{proof}

When we specialize the above proposition to the units $u_1=T_1,\dots, u_{d-1}=T_{d-1}$ we obtain the following 
generalization of the  Artin-Hasse formula \cite{Hasse} to arbitrary higher local fields. 

\begin{coro}\label{Lemma-Ev-torison-3}
 Let $\Ma$ be an arbitrary higher local field with a system of local uniformizers $T_1,\dots, T_{d-1}$ and $\pi_{\Ma}$ such that $\Ma \supset \Ka_{\pi,n}$. Let  $g\in \Pi\Lambda_{\pi}$ and $e_{g,n}=[1]_{f,g}(e_{f,n})$. Then the following identity holds
\[\,
\big(\,\{T_1,\dots, T_{d-1},e_{g,n}\}\,,\,x\,\big)_{\Ma,n}
=\left[ 
\mathbb{T}_{\Ma/S}\left(\,\frac{1}{\,\xi^n\,l'_g(e_{g,n})\,e_{g,n}\,}\, l_f(x)\,\right)\,
\right]_f \,(e_{f,n})
\]
for   all $x\in F_f(\mu_{\Ma})$.
\end{coro}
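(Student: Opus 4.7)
The plan is to adapt the proof of Proposition \ref{Lemma-Ev-torison} to the general higher local field $\Ma\supset\Ka_{\pi,n}$ (in place of $K_{\xi,n}\TT$) and then simultaneously specialize the units to $u_i=T_i$, the given uniformizers of $\Ma$. This specialization makes $\det[\partial T_i/\partial T_j]_{1\le i,j\le d-1}$ equal to the identity and cancels the product $T_1\cdots T_{d-1}$ between the numerator of $QL$ and the $u_1\cdots u_{d-1}$ in its denominator, leaving the bare kernel $1/(\xi^n\,l'_g(e_{g,n})\,e_{g,n})$ that appears in the corollary.

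Concretely, set $\Ma_s=\Ma(\kappa_{g,s})$ for $s\ge n$. Since $\Ma\supset\Ka_{\pi,n}$ already contains $\kappa_{g,n}$, the extension $\Ma_s/\Ma$ is abelian and totally ramified with respect to the rank-one valuation, so $e_{g,s}$ serves as a rank-one uniformizer of $\Ma_s$ while $T_1,\dots,T_{d-1}$ remain as the higher uniformizers. The norm-torsion identity \eqref{norm-torsion} gives $N_{\Ma_s/\Ma}(e_{g,s})=e_{g,n}$, so the projection formula for Milnor symbols (legitimate because $T_i\in\Ma$) yields
\[
\{T_1,\dots,T_{d-1},e_{g,n}\}=N_{\Ma_s/\Ma}\bigl(\{T_1,\dots,T_{d-1},e_{g,s}\}\bigr),
\]
placing the symbol in $\bigcap_{s\ge n}N_{\Ma_s/\Ma}(K_d(\Ma_s))$. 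With $u_i=T_i$, the formula defining $QL_{g,\Ma_s,s}$ collapses to $1/(\xi^s\,l'_g(e_{g,s})\,e_{g,s})$, and the trace-torsion identity \eqref{trace-torsion} transports its $\text{Tr}_{\Ma_s/\Ma}$ down to $1/(\xi^n\,l'_g(e_{g,n})\,e_{g,n})$.

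Feeding these inputs into Theorem \ref{Lubinazo} applied to the formal group $F_g$ over $\Ma$ (with $s$ chosen large enough to meet its hypotheses) produces the analogue of the claimed identity for the pairing $(\,\cdot\,,\,\cdot\,)_{g,\Ma,n}$. One then transfers from $F_g$ back to $F_f$ via the canonical isomorphism $t=[1]_{f,g}$, exactly as in the proof of Proposition \ref{Lemma-Ev-torison-2}: by \cite{Florez} Proposition 2.2.1.(8) one has $t((\alpha,x)_{\Ma,n})=(\alpha,t(x))_{g,\Ma,n}$, and the identities $l_g\circ t=l_f$ and $t(e_{f,n})=e_{g,n}$ finish the conversion to the stated $F_f$-formula.

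The main subtlety lies in confirming that $T_1,\dots,T_{d-1},e_{g,s}$ really form a valid system of local uniformizers for the $d$-dimensional structure on $\Ma_s$, so that the formula for $QL_{g,\Ma_s,s}$ takes the clean form used above; this follows from rank-one total ramification of $\Ma_s/\Ma$ (which preserves the full chain of residue fields, hence the higher uniformizers) together with the fact that $e_{g,s}$ generates $\Ma_s$ over $\Ma$. Once granted, the remaining steps are direct invocations of \eqref{norm-torsion}, \eqref{trace-torsion}, Theorem \ref{Lubinazo}, and \cite{Florez} Proposition 2.2.1.(8).
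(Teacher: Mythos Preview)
Your route differs from the paper's and carries a real gap. The paper does not climb a tower over $\Ma$ at all; it reduces to $\La=\Ka_{\pi,n}$ via functoriality of the Kummer pairing in its \emph{second} argument. By \cite{Florez} Proposition 2.2.1.(7), for $\alpha\in K_d(\La)$ one has $(\alpha,x)_{\Ma,n}=(\alpha,N^{F_f}_{\Ma/\La}(x))_{\La,n}$; since $l_f\bigl(N^{F_f}_{\Ma/\La}(x)\bigr)=\mathrm{Tr}_{\Ma/\La}(l_f(x))$ and $\mathbb{T}_{\Ma/K}=\mathbb{T}_{\La/K}\circ\mathrm{Tr}_{\Ma/\La}$, the identity over $\Ma$ follows in one line from Proposition~\ref{Lemma-Ev-torison-2} with $u_i=T_i$. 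This is considerably shorter than re-running the argument of Proposition~\ref{Lemma-Ev-torison} over the larger field.

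Your argument has two problems. First, the assertion that $e_{g,s}$ is a rank-one uniformizer of $\Ma_s$ is false in general: if $\Ma/\Ka_{\pi,n}$ has rank-one ramification index $e>1$ (e.g.\ $\Ma=\Ka_{\pi,n}(\sqrt{e_{g,n}})$), then $v_{\Ma_s}(e_{g,s})=e(\Ma_s/K_{\pi,s})>1$. Your $QL$ computation happens to survive anyway, via \eqref{general-derivation-def} and the identity $b'=l'_g(e_{g,s})\,\partial e_{g,s}/\partial\gamma_s$, but not for the reason you give. Second, and this is the genuine gap: $\mathrm{Gal}(\Ma_s/\Ma)$ is isomorphic to $\mathrm{Gal}(K_{\pi,s}/K_{\pi,r})$ where $K_{\pi,r}=\Ma\cap K_{\pi,\infty}$, and nothing in your hypotheses forces $r=n$. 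When $r>n$ (take $\Ma=\Ka_{\pi,n+1}$), identity \eqref{norm-torsion} gives $N_{\Ma_s/\Ma}(e_{g,s})=g^{(s-r)}(e_{g,s})=e_{g,r}$ rather than $e_{g,n}$, and \eqref{trace-torsion} produces the kernel $1/(\xi^r l'_g(e_{g,r}) e_{g,r})$. Your argument therefore establishes the formula for $\{T_1,\dots,T_{d-1},e_{g,r}\}$, not for $\{T_1,\dots,T_{d-1},e_{g,n}\}$; closing the gap would require an additional step (for instance summing over the translates $F_g(e_{g,r},v)$, $v\in\kappa_{g,r-n}$, and reapplying \eqref{norm-torsion}--\eqref{trace-torsion}) that you do not supply. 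The paper's norm-in-$x$ reduction sidesteps both issues entirely.
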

\begin{proof}
Let $\La=\Ka_{\pi,n}$. Since $\Ma/\La$ is a finite extension we know,
 by  \cite{Florez} Proposition 2.2.1. (7), that  
$(\alpha,x)_{\Ma,n}=(\alpha,N_{\Ma/\La}^{F_f}(x))_{\La,n}$ for all $\alpha\in K_d(\La)$ and all $x\in F_f(\mu_{\Ma})$; here $N_{\Ma/\La}^{F_f}(x)=\oplus_{F_f,\,\sigma}\,x^{\sigma}$ as 
         $\sigma$ ranges over all embeddings  of $\Ma$ in $\overline{\La}$ over $\La$. Therefore it is enough to show the result for $\La= \Ka_{\pi,n}$, but this is a consequence of Proposition \ref{Lemma-Ev-torison-2}. 
\end{proof}

As we remarked above, Corollary \ref{Lemma-Ev-torison-3} is also a generalization of  \cite{Zinoviev} Corollary 2.1 (25) to Lubin-Tate formal groups. Indeed, simply let $\zeta_{p^n}$ be a primitive $p^n$th root of 1 and  take $K=\Qp$, $\pi=p$, $f(X)=g(X)=(X+1)^p-1$,  $l_f(X)=\log(X+1)$, $\La=\Qp(\zeta_{p^n})\TT$, $e_{f,n}=\zeta_{p^n}-1$ and $F_f=F_{g}$ to be the multiplicative group $F_m(X,Y)=X+Y+XY$. On the other hand, the formula \cite{Zinoviev} Corollary 2.1 (24), namely,
\[
\big(\,\{T_1,\dots, T_{d-1},\zeta_{p^n}\}\,,\,x\,\big)_{\La,n}
=\left[ 
\mathbb{T}_{\La/\Qp}\left(\,\frac{1}{\,p^n\,}\, l_f(x)\,\right)\,
\right]_f \,(e_{f,n})\, , \quad (\,x\in F_m(\mu_{\La})\,)
\]
is obtained in a similar fashion from Theorem \ref{Lubinazo}.  This time noticing first that, for 
$\La_s=\Qp(\zeta_{p^s})\TT$ ($s\geq 1$) with $\{\zeta_{p^s}\}$ a collection of primitive $p^s$th roots of 1 such that $\zeta_{p^{s+1}}^p=\zeta_{p^s}$, we have $N_{\La_s/\La}(\zeta_{p^s})=\zeta_{p^n}$ 
for all $s\geq n$, from which follows that $\{T_1,\dots, T_{d-1},\zeta_{p^n}\}\in K_d(\La)'$,
and also by noticing that
\[
QL_s(T_1,\dots, T_{d-1},\zeta_{p^s})=\frac{1}{p^s}\,.
\]

Furthermore, the following stronger result is true:
\begin{equation}\label{Stronger-Artin-Hasse}
\big(\,\{u_1,\dots, u_{d-1},\zeta_{p^n}\}\,,\,x\,\big)_{\La,n}
=\left[ 
\mathbb{T}_{\La/\Qp}\left(\,\frac{1}{\,p^n\,}\, \frac{\det \left[\frac{\partial u_i}{\partial T_j}\right]}{u_1\cdots u_{d-1}} \,l_f(x)\,\right)\,
\right]_f (e_{f,n})
\end{equation}
for all units $u_1,\dots, u_{d-1}$ of  $\La$ and  all $x\in F_m(\mu_{\La})$. This result, as well as Proposition \ref{Lemma-Ev-torison-2}, is not covered  in any of the reciprocity laws in the literature.
\subsection{The lift of $QL_n$}\label{The lift of $QL_n$}

In order to  extend Theorem \ref{Partial-Iwasawa-formulas} to all $\alpha \in K_d(\La)$ we need to modify the logarithmic derivative $QL_n$ so that it is defined $\pmod{(\pi^n/\pi_1)P_n}$. This is the content of the following definition. 

\begin{defin}\label{lift}
Let $\Ma=\Ka_{\pi,s}$ for $s\geq n$. We define a lift of  $QL_s$ to a homomorphism
\[
QL_s: K_d(\Ma)\ \longrightarrow \ \frac{\ \frac{1}{\gamma_s}P_s\ }{\ \frac{\pi^s}{\pi_1}P_s\ }\ .
\]
as follows. First, for elements $\{u_1,\dots,u_d\}\in K_d(\Ma)$, with  
units $u_1,\dots, u_d$ in $\Ma$, the $QL_s$ is already defined mod $(\pi^s/\pi_1)\,P_s$. We set now 
\[
QL_s(u_1,\dots, u_{d-1},e_{f,s})=\frac{\det \left[\frac{\partial u_i}{\partial T_j}\right]}{u_1\cdots u_{d-1}}\,\frac{\Tdc}{\pi^s\,l_f'(e_{f,s})\,e_{f,s}}\, ,
\]
where $\frac{\partial a}{\partial T_j}$, $j=1,\dots, d-1$, are the canonical derivations from Section \ref{Terminology and Notation}. Next we define
\[
QL_s(u_1,\dots, u_d\cdot (e_{f,s})^k)=QL_s(u_1,\dots, u_{d})+k\,QL_s(u_1,\dots, e_{f,s}).
\]
for units $u_1,\dots, u_{d}\in \Ma$ and any integer $k\in \Z$.
Finally, we set $QL_s(a_1,\dots, a_{d})=0$ whenever $a_i\neq a_j$ for $i\neq j$, $a_1,\dots, a_d\in \Ma^*$.
\end{defin}

Let $\xi=u\pi$ with $u\equiv 1 \pmod{\pi^s}$ and $g\in \Lambda_{\xi}$. Let $\epsilon$ be the unit associated to $u$ from Section \ref{Varying the uniformizer of $K$}.  We may show, by modifying the argument in \cite{koly} Page 343 to our situation, that when $e_{g,s}=[u^{-s}\epsilon]_{f,g}(e_{f,s})$ we have
\begin{equation}\label{lift-evaluated-in-eg}
QL_s(u_1,\dots, u_{d-1}, e_{g,s})=\frac{\det \left[\frac{\partial u_i}{\partial T_j}\right]}{u_1\cdots u_{d-1}}\,\frac{ \Tdc}{\ \xi^s\,l'_g(e_{g,s})\,e_{g,s}\ }\,\tau_{\epsilon}\ ,
\end{equation}
 where $\tau_{\epsilon}$ is a unit  in $\Zp$ such that $\epsilon\equiv \tau_{\epsilon}\pmod{\pi^s}$ (cf. \cite{koly} Lemma 7.1). This $\tau_{\epsilon}$ is uniquely determined by multiplication by a unit which is congruent to 1 $\pmod{ \pi^s}$.

In order to verify \eqref{lift-evaluated-in-eg} let $I/\Qp$ be an unramified extension such that $\epsilon$ belongs to $\mathcal{O}_I$, the ring of integers of $I$. Let 
\begin{equation}\label{urm}
M'=I K_{\pi,s}\quad  \text{and}\quad \Ma'=M'\TT. 
\end{equation}
Then   the derivation $Q_s:=Q_{\Ma,s}:\mathcal{O}_s\to P_s/((\pi^s/\pi_1)P_s)$ from Section \ref{cann} extends to a derivation
\[
Q_s:\mathcal{O}_{\Ma'}\to \frac{P_s\mathcal{O}_{\Ma'}}{\frac{\pi^s}{\pi_1}P_s\mathcal{O}_{\Ma'}}\, ,
\]
by including $\mathcal{O}_{I}$ in the ring of constants of $Q_s$.

From \eqref{qmsl} and \eqref{general-derivation-def} we have
\[
Q_s(u_1,\dots, u_{d-1},e_{f,s})=\det \left[\frac{\partial u_i}{\partial T_j}\right]\,Q_s(T_1,\dots, T_{d-1},e_{f,s})=\det \left[\frac{\partial u_i}{\partial T_j}\right]\,\frac{\Tdc}{\pi^s\,l_f'(e_{f,s})}\, .
\]
Let $[u^{-n}\epsilon]_{f,g}(X)=t(X)$. Then
\[
QL_s(u_1,\dots, u_{d-1},e_{g,s})=QL_s(u_1,\dots, u_{d-1},e_{f,s})+QL_s\left(u_1,\dots, u_{d-1},\left( \frac{t(X)}{X}\right)(e_{f,s})\right).
\]
Thus, since $(t(X)/X)(e_{f,s})$ is a unit, by the definition of $QL_s$ and the fact that  $t(e_{f,s})=e_{g,s}$ we have
\begin{align*}
 & QL_s\left(u_1,\dots, u_{d-1},  \left(   \frac{t(X)}{X}\right)(e_{f,s})  \right)  \\
&=\left(\left(\frac{t'(e_{f,s})}{e_{f,s}}-\frac{t(e_{f,s})}{(e_{f,s})^2}\right)\bigg/ \frac{t(e_{f,s})}{e_{f,s}}\right)Q_s(u_1,\dots,u_{d-1},e_{f,s})\\
&=\left(
      \left(\frac{t'(e_{f,s})}{e_{f,s}}-\frac{t(e_{f,s})}{(e_{f,s})^2} \right)
        \bigg/ \frac{t(e_{f,s})}{e_{f,s}}\right)
\frac{\det \left[\frac{\partial u_i}{\partial T_j}\right]}{u_1\cdots u_{d-1}}\,\frac{\Tdc}{\pi^s\,l_f'(e_{f,s})}\\
&=\frac{t'(e_{f,s})\,\Tdc}{e_{g,s}\,l_f'(e_{f,s})\pi^s}\frac{\det \left[\frac{\partial u_i}{\partial T_j}\right]}{u_1\cdots u_{d-1}}-QL_s(u_1,\dots, u_{d-1},e_{f,s}).
\end{align*}
Observing that
\[
l_g(t(X))=u^{-s}\epsilon\,l_f(X) 
\quad \Longrightarrow \quad  
l_g'(t(X))\,t'(X)=u^{-s}\epsilon \,l_f'(X),
\]
then, since $\epsilon-\tau_{\epsilon}\equiv 0\pmod{\pi^s}$, we have
\[
\frac{t'(e_{f,s})}{e_{g,s}\, l_f'(e_{f,s})\pi^s}
=\frac{\epsilon}{u^s\,\pi^sl_g'(e_{g,s})\,e_{g,s}}
=\frac{\epsilon}{\xi^s\,l_g'(e_{g,s})\,e_{g,s}}
=\frac{\tau_\epsilon}{\xi^s\,l_g'(e_{g,s})\,e_{g,s}},
\]
from which \eqref{lift-evaluated-in-eg} follows. Furthermore, we obtain the following
\begin{lemma}\label{lema-impt}
Let $\La=\Ka_{\pi,n}$ and $QL_n$ be the lift from Definition \ref{lift}. Let $\xi=\pi u$ with $u\equiv 1\pmod{\pi^n}$ and take $g\in \Pi\Lambda_{\xi}$. Let $\epsilon$ be the unit associated to $u$ from Section \ref{Varying the uniformizer of $K$}. Set $e_{g,n}=[u^{-n}\epsilon]_{f,g}(e_{f,n})$. The following identity holds
\[
\big(\,\{u_1,\dots, u_{d-1},e_{g,n}\}\, ,\,x\,\big)_{\La,n}= \left[\Tr\left(\,QL_n(u_1,\dots, u_{d-1},e_{g,n})\, l_f(x)\,\right)\right]_f\,(e_{f,n})
\]
for all units $u_1\dots, u_{d-1}$ in $\La$ and  all $x\in F_f(\mu_{\La})$  such that $v_{\La}(x)\geq 2\,v_{\La}(p)/(\varrho\,(q-1))$.
\end{lemma}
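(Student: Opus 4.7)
The plan is to combine Proposition \ref{Lemma-Ev-torison} (applied to $F_g$) with the naturality of the Kummer pairing under the formal group isomorphism $t = [u^{-n}\epsilon]_{f,g}$ and the explicit identity \eqref{lift-evaluated-in-eg} just established, following Kolyvagin's strategy in \cite{koly} Section 7.4, which itself adapts the proof of Proposition \ref{Lemma-Ev-torison-2} to the case $\xi \neq \pi$. Since $u \equiv 1 \pmod{\pi^n}$, Proposition \ref{importa} gives $K_{\xi,n} = K_{\pi,n}$, so $\La = \Ka_{\xi,n}$, and Proposition \ref{Lemma-Ev-torison} applies to the formal group $F_g$ over $\La$ (and, via its proof through Theorem \ref{Lubinazo}, extends to any higher local field containing $K_{\xi,n}$).

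I will pass to the unramified extension $\Ma' = IK_{\pi,n}\TT$ (with $\epsilon \in \mathcal{O}_I$), over which $t(X) \in X\mathcal{O}_I[[X]]$ is an honest isomorphism $F_f \to F_g$ with $t(e_{f,n}) = e_{g,n}$ and $l_g\circ t = u^{-n}\epsilon\,l_f$. Applying Proposition \ref{Lemma-Ev-torison} over $\Ma'$ with $y = t(x)$, using Proposition 2.2.1(8) of \cite{Florez} to rewrite $(\alpha, t(x))_{g,\Ma',n} = t((\alpha, x)_{f,\Ma',n})$, and using the intertwining $[a]_g(e_{g,n}) = t([a]_f(e_{f,n}))$ to cancel $t$ from both sides, I obtain
\[
(\{u_1,\dots,u_{d-1},e_{g,n}\},x)_{f,\Ma',n}=\left[\Tr_{\Ma'/K}\left(\frac{\det\!\left[\frac{\partial u_i}{\partial T_j}\right]}{u_1\cdots u_{d-1}}\cdot\frac{T_1\cdots T_{d-1}}{\xi^n\,l'_g(e_{g,n})\,e_{g,n}}\cdot u^{-n}\epsilon\, l_f(x)\right)\right]_f(e_{f,n}).
\]
To descend to $\La$, the congruence $\epsilon \equiv \tau_\epsilon \pmod{\pi^n}$ with $\tau_\epsilon \in \Zp \subset \La$ allows replacing $\epsilon$ by $\tau_\epsilon$ inside the trace (harmless since the value lives in $\kappa_{f,n}$, which is killed by $\pi^n$); the integrand then lies in $\La$ and $\Tr_{\Ma'/K}$ on it equals $[\Ma':\La]\cdot\Tr_{\La/K}$. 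On the left side, Proposition 2.2.1(7) of \cite{Florez} (combined with $N^{F_f}_{\Ma'/\La}(x)=[\,[\Ma':\La]\,]_f(x)$ for Galois-fixed $x \in F_f(\mu_\La)$) gives $(\alpha,x)_{f,\Ma',n} = [\,[\Ma':\La]\,]_f((\alpha,x)_{f,\La,n})$. Cancelling the common factor of $[\,[\Ma':\La]\,]_f$ on both sides and identifying the integrand with $QL_n(u_1,\dots,u_{d-1},e_{g,n})\cdot l_f(x)$ via \eqref{lift-evaluated-in-eg} produces the asserted formula.

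The weakening of the valuation hypothesis from $v_\La(x)\ge 2v_\La(p)/(\varrho(q-1))+1$ (as in Theorem \ref{Partial-Iwasawa-formulas}) to $v_\La(x)\ge 2v_\La(p)/(\varrho(q-1))$ comes automatically from the fact that the lift $QL_n$ is defined modulo $(\pi^n/\pi_1)P_n$ rather than modulo the smaller ideal $(\pi^n/(\pi_1\gamma_n))P_n$, which dualizes to the weaker bound on $l_f(x)$. The main obstacle will be the cancellation of $[\,[\Ma':\La]\,]_f$ in the descent step: if $[\Ma':\La]$ happens to be divisible by $p$, then $[\,[\Ma':\La]\,]_f$ fails to act invertibly on $\kappa_{f,n}$, so the cancellation must be justified either by choosing $I$ carefully (of residue degree coprime to $p$ but still capturing $\epsilon$ modulo $\pi^n$), or by performing the substitution $\epsilon \to \tau_\epsilon$ directly at the level of derivations before invoking the Kummer pairing, thereby mirroring the verification of \eqref{lift-evaluated-in-eg} and bypassing the explicit passage through $\Ma'$.
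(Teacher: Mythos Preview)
Your approach has a genuine gap in the passage through $\Ma'$. The element $\epsilon\in\hat{C}_{nr}$ satisfies $\text{Fr}_K(\epsilon)=\epsilon u$; for $u\in 1+\pi^nC$ with $u\neq 1$ this forces $\text{Fr}_K^m(\epsilon)=\epsilon u^m\neq\epsilon$ for every $m\geq 1$, so $\epsilon$ lies in \emph{no} finite unramified extension of $K$. Consequently the only field $I$ with $\epsilon\in\mathcal{O}_I$ is essentially $\hat{K}^{nr}$ itself, $\Ma'=IK_{\pi,n}\TT$ is not a higher local field, Kato's reciprocity map $\Upsilon_{\Ma'}$ is undefined, and none of the Kummer-pairing identities (Proposition~\ref{Lemma-Ev-torison}, or Propositions~2.2.1(7),(8) of \cite{Florez}) are available over $\Ma'$. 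The paper uses $\Ma'$ only to extend the derivation $Q_s$ formally while verifying \eqref{lift-evaluated-in-eg}; it never forms a Kummer pairing there. Your first proposed fix (choosing $I$ of degree prime to $p$ that captures $\epsilon$ only mod $\pi^n$) fails for the same reason: any genuine isomorphism $F_f\to F_g$ over $\mathcal{O}_I$ must have linear coefficient $a\epsilon$ with $a\in C^*$, hence $\epsilon\in\mathcal{O}_I$ is still required. Your second fix is too vague to supply the missing bridge between the $f$- and $g$-pairings.

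The paper's argument avoids $\Ma'$ entirely on the Kummer side. It sets
\[
y=[\epsilon]_{f,g}(x)\ominus_g l_g^{-1}\big((\epsilon-\tau_\epsilon)\,l_f(x)\big),
\]
observes that $l_g(y)=\tau_\epsilon\,l_f(x)\in\La$, and hence (by the valuation hypothesis on $x$ and Lemma~\ref{local-iso-log}) that $y=l_g^{-1}(\tau_\epsilon\,l_f(x))$ actually lies in $F_g(\mu_\La)$. The crux is then the identity
\[
(\alpha,y)_{g,\La,n}=[u^{-n}\epsilon]_{f,g}\big((\alpha,x)_{\La,n}\big),
\]
proved by exhibiting an explicit $g$-division point $w$ of $y$ built from a chosen $f$-division point $z$ of $x$, and computing $\Upsilon_\La(\alpha)(w)\ominus_g w$ directly using \eqref{Frob-higher}, namely $[\epsilon]_{f,g}^{\Upsilon_\La(\alpha)}=[\epsilon\,u^{\mathfrak{v}_\La(\alpha)}]_{f,g}$. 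Once this is in hand, applying Proposition~\ref{Lemma-Ev-torison} over $\La$ (not $\Ma'$) to $(\alpha,y)_{g,\La,n}$ and invoking \eqref{lift-evaluated-in-eg} finishes the proof. The point you are missing is precisely this construction of $y$ and the Galois computation replacing your appeal to naturality over $\Ma'$.
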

\begin{proof}
Let $x\in F(\mu_{\La})$
such that $v_{\La}(x)\geq 2\,v_{\La}(p)/(\varrho\,(q-1))$. Set
\[
y=[\epsilon]_{f,g}(x)\ominus_g l_g^{-1}\big(\,(\epsilon-\tau_{\epsilon})\,l_f(x)\,\big),
\]
where $\ominus_g$ is subtraction in the formal group $F_g$ and $\tau_{\epsilon}$ is the unit  in $\Zp$ such that $\epsilon\equiv \tau_{\epsilon}\pmod{\pi^n}$ that was described above (for $s=n$). Let  $f^{(n)}(z)=x$. 
It follows, basically using Kolyvagin's same argument in \cite{koly} Proposition 7.26, that this $y$
belongs to $F_g(\mu_{\La})$ and, furthermore,  $l_g(y)=\tau_{\epsilon}l_f(x)$. We will show that 
\begin{equation}\label{dictio}
(\alpha,y)_{g,\La,n}=[u^{-n}\epsilon]_{f,g}( \,(\alpha,x)_{\La,n} \,)\quad (\,\forall \alpha\in K_d(\La)).
\end{equation}
 Indeed, observe first that
\[
[\xi^n]_g \left( \, [u^{-n}]_g \, \left( \, 
[\epsilon]_{f,g}(z)\, \ominus_g\, l_g^{-1}\left(\frac{(\epsilon-\tau_{\epsilon})}{\pi^n}\,l_f(x)\right)
  \, \right)  \, \right)
  =y,
\]
where $((\epsilon-\tau_{\epsilon})/\pi^n)l(x)\in \pi_1^2 \mathcal{O}_{\Ma'}$  for $\Ma'$ as in \eqref{urm} with $s=n$; therefore $l_g^{-1}$ converges
on $((\epsilon-\tau_{\epsilon})/\pi^n)l(x)$ by Lemma \ref{local-iso-log}. Let $\alpha\in K_d(\La)$, $\zeta:=(\alpha,x)_{\La,n}$ and
\[
w:=[u^{-n}]_g \, \left( \, 
[\epsilon]_{f,g}(z)\, \ominus_g\, l_g^{-1}\left(\frac{(\epsilon-\tau_{\epsilon})}{\pi^n}\,l_f(x)\right)
  \, \right).
\]
Then $g^{(n)}(w)=y$ and so, from the very definition of $(\alpha,y)_{g,\La,n}$, we have
\[
(\alpha,y)_{g,\La,n}=\Upsilon_{\La}(\alpha)(w)\ominus_g w.
\]
Let $z':=\Upsilon_{\La}(\alpha)(z)$. Since  by \eqref{Frob-higher} $$\Upsilon_{\La}(\alpha)\big(\,[\epsilon]_{f,g}(z)\,\big)
=[\epsilon u^{\mathfrak{v}_{\La}(\alpha)}]_{f,g}\big(z'\big)\, ,$$ 
 then
\[
\Upsilon_{\La}(\alpha)(w)=[u^{-n}]_g
     \left( \,  
       [\epsilon\,u^{\mathfrak{v}_{\La}(\alpha)}]_{f,g}(z')
        \, \ominus_g \, 
           l_g^{-1} \left( \frac{(\epsilon\,u^{\mathfrak{v}_{\La}(\alpha)}-\tau_{\epsilon})}{\pi^n}\,l_f(x)\right)
            \right)\, ,
\]
and so
\begin{align*}
(\alpha,y)_{g,\La,n}
=[u^{-n}]_g \bigg(
     &
        \big\{ \, 
            [\epsilon\,u^{\mathfrak{v}_{\La}(\alpha)}]_{f,g}(z')
         \, \ominus_g \, [\epsilon]_{f,g}(z')
           \, \big\}    \\
         & \,  \oplus_g \,
         \big\{\,
           [\epsilon]_{f,g}(z')\ominus_g [\epsilon]_{f,g}(z)
                \, \big\} 
        \ominus_g
           l_g^{-1} 
              \left( 
              \frac{\epsilon(\,u^{\mathfrak{v}_{\La}(\alpha)}-1)}{\pi^n}\,l_f(x)
               \right)
               \bigg)\, .
\end{align*}
Then
\[
(\alpha,y)_{g,\La,n}=[u^{-n}\epsilon]_{f,g}(\zeta)
      \oplus_g
      [u^{-n}]_g \left(\,
      [\epsilon(u^{\mathfrak{v}_{\La}(\alpha)}-1)]_{f,g}(z')
      \ominus_g
      l_g^{-1}\left( \frac{\epsilon(u^{\mathfrak{v}_{\La}(\alpha)}-1)}{\pi^n}\, l_f(x)\right) 
      \,\right).
\]
But noticing that
\[
[\epsilon(u^{\mathfrak{v}_{\La}(\alpha)}-1)]_{f,g}(z')
      =
      l_g^{-1}\left( \frac{\epsilon(u^{\mathfrak{v}_{\La}(\alpha)}-1)}{\pi^n}\, l_f(x)\right),
\]
which follows by applying $l_g$ on both sides,  we obtain \eqref{dictio}.

Now let $e_{g,n}=[u^{-n}\epsilon]_{f,g}(e_{f,n})$. It follows from Proposition \ref{Lemma-Ev-torison} that
\begin{align*}
&\big(\,\{u_1,\dots, u_{d-1},e_{g,n}\}\,,\,y\,\big)_{g,\La,n}\\
&=\left[\Tr\left(\,\frac{\det \left[\frac{\partial u_i}{\partial T_j}\right]}{u_1\cdots u_{d-1}}\,\frac{\Tdc}{\,\xi^n\,l'_g(e_{g,n})\,e_{g,n}\,}\, l_g(y)\,\right)\right]_g(e_{g,n})\\
&=\left[\Tr\left(\,\frac{\det \left[\frac{\partial u_i}{\partial T_j}\right]}{u_1\cdots u_{d-1}}\,\frac{\,\Tdc}{\,\xi^n\,l'_g(e_{g,n})\,e_{g,n}\,}\,\tau_{\epsilon}\, l_f(x)\,\right)\right]_g\,(e_{g,n}).
\end{align*}
According to \eqref{lift-evaluated-in-eg} this last expression coincides with
\[
\Tr\left(\, QL_n\big(\,u_1,\dots, u_{d-1},e_{g,n}\,\big)\, l_f(x)\,\right),
\]
where here we understand $QL_n$ as the lifted logarithmic derivative from Definition \ref{lift}. Thus in light of \eqref{dictio} we obtain
\[
\big(\,\{u_1,\dots, u_{d-1},e_{g,n}\}\,,\,x\,\big)_{\La,n}
=\left[
\Tr\left(\, QL_n\big(\,u_1,\dots, u_{d-1},e_{g,n}\,\big)\, l_f(x)\,\right)\right]_f(e_{f,n}).
\]

\end{proof}

In order to refine the results of Theorem \ref{Partial-Iwasawa-formulas}, which is the content of Corollary  \ref{Iwa-Id-Wiles group} below, we need to do a further analysis 
of the behavior of the formulas and logarithmic derivatives at elements of $K_d(\La)$ of the form $\{u_1,\dots, u_d\}$, where $u_1,\dots, u_d$ are units in $\La$.  The result of this analysis is  stated in
Lemma \ref{units-Iwasawa-formulas} which, combined with 
Lemma \ref{lema-impt}, will finally yield Corollary  \ref{Iwa-Id-Wiles group}.

As a preamble for Lemma \ref{units-Iwasawa-formulas}, we start with the following considerations. Let $\Ma=\Ka_{\pi,s}$. From Definition \ref{lift} and Equation \eqref{units-map-q} both $QL_s$ and its lift
are defined $\text{mod}\,(\pi^s/\pi_1)P_s$ when evaluated at elements  of the form
$\{u_1,\dots, u_d\} \in K_d(\Ma)$ with $u_1,\dots,u_d\in \OMa^*=\{x\in\Ma:v_{\Ma}(x)=0\}$. More specifically,
\begin{equation}\label{lifty-unity}
QL_s(u_1,\dots, u_d)\in P_s/\big(\,(\pi^s/\pi_1)P_s\,\big).
\end{equation}
In particular, we observe from the very definitions that $QL_s$ and its lift coincide when evaluated at these specific elements.

Therefore, if we let $UK_d(\Ma)$ be the subgroup of $K_d(\Ma)$ generated by all the elements
of the form $\{u_1,\dots, u_d\} \in K_d(\Ma)$ for units $u_1,\dots,u_d\in \OMa^*$
then by restricting $QL_s$ to $UK_d(\Ma)$ we obtain a map
\begin{equation}\label{restricted-lift}
QL_s:UK_d(\Ma)\longrightarrow \frac{P_s}{\frac{\pi^s}{\pi_1}P_s}.
\end{equation}

This map satisfies an analogous compatibility   relation to Proposition \ref{levelQL}.
\begin{prop}\label{compat-units}
Let $\La=\Ka_{\pi,n}$. For any $t\geq s \geq n$ the following diagram commutes
\[
\begin{CD}
 UK_d(\La_t) @>{ QL_{t} }>> \frac{P_t}{\frac{\pi^t}{\pi_1}P_t}   \\
@V{N_{t/s}}VV @VV{\normalfont \text{Tr}_{t/s}}V \\
UK_d(\La_s) @>>{QL_s}> \frac{P_s}{\frac{\pi^s}{\pi_1}P_s}.
\end{CD}
\]
The fact that $N_{t/s}( UK_d(\La_t))\subset UK_d(\La_s) $ is a consequence of \cite{Morrow} page 568.
\end{prop}

\begin{proof}
Just  as in the proof of Proposition \ref{levelQL} we assume $t=s+1$. 
By Equation \eqref{crux-prop} we have, in particular,  that for all $\alpha\in UK_d(\La_t)$
 \begin{multline}\label{anot}
          QL_t\left(N_{t/s}(\alpha)\right)=\left( \sum \, \tau_t(g)g\, 
                                               \right)QL_t(\alpha)=\\
                                          =\left( \sum g\right)QL_t(\alpha)
                                            + \left( \sum \big(\tau_t(g)-1\big)g \right) QL_t(\alpha),
     \end{multline}
       where each $\sum$ is taken over all $g\in \text{Gal}(\La_t/\La_s)$. Since $QL_t(\alpha)\in P_t/((\pi^t/\pi_1)P_t)$
       for $\alpha\in UK_d(\La_t)$ (cf. Equations \eqref{lifty-unity} and \eqref{restricted-lift}), and $ \sum \big(\tau_t(g)-1\big)g $ 
       takes $P_t$ to  $\frac{\pi^s}{\pi_1}P_s\OZ_{\La_t}$ by Proposition \ref{imagedelta}, then
     \begin{align*}
     QL_s\big(\,N_{t/s}(\alpha) \,\big)&=QL_t\big(\,N_{t/s}(\alpha)\,\big) \pmod{ \frac{\pi^s}{\pi_1}P_s}\\
                       &=\text{Tr}_{t/s}\big(\,QL_t( \alpha)\,\big),     
     \end{align*}
          where the first equality follows from Proposition \ref{extension-Q} and the second from Equation \eqref{anot}.
    
\end{proof}

Let $\La=\Ka_{\pi,n}. $ In order to obtain an analogous result to Theorem \ref{Partial-Iwasawa-formulas}
for the group $UK_d(\La)$, we need to define
\[
UK_d(\La)':=\bigcap_{m\geq n}UK_{d}(\La_m).
\]
With this notation we now have the following result.

\begin{lemma}\label{units-Iwasawa-formulas}
Let $\La=\Ka_{\pi,n}$ and  $QL_n$ be the map from Equation \eqref{restricted-lift}. The following identity holds:
\begin{equation}\label{formula-in-units}
(\alpha, x)_{\La,n}= \left[\,\Tr  \big(\,QL_n(\alpha)\,l_f(x)\,\big)\,\right]_f(e_{f,n})
\end{equation}
for all $\alpha \in UK_d(\La)'$ and 
all $x\in F(\mu_{\La})$ such that $v_{\La}(x)\geq 2\,v_{\La}(p)/(\varrho\,(q-1))$.
\end{lemma}
\begin{proof}
First of all, notice that for $\La=\Ka_{\pi,n}$ we have $n=r=r'$; for $r$ and $r'$ as in Section \ref{The canonical logarithmic derivatives}. Borrowing the notation from Theorem  \ref{Lubinazo} we have,  from
Equation \eqref{key-id-res} restricted to $ UK_d(\La_t)$, the following:  for  all $\epsilon\in UK_d(\La_t)$ and  all $x\in F(\mu_{\La})$
\begin{equation}\label{nece-ta}
(N_{\La_t/\La}(\epsilon),x)_{\La,n}^1=\mathbb{T}_{\La_t/K}(QL_t(\epsilon)l_f(x)) \pmod{\pi^nC}. 
\end{equation}
Here is important to observe that since we are restricting ourselves to elements $\alpha\in UK_d(\La_t)$,
then the map $QL_t(\alpha)$ is defined mod$(\pi^n/\pi_1)P_{\Ma}$ (cf. Equation  \eqref{units-map-q}).
Furthermore,  according to Equations \eqref{lifty-unity} and  \eqref{restricted-lift} the map $QL_t$ coincides with its lift at $UK_d(\La_t)$. Therefore in Equation \eqref{nece-ta} the map $QL_t$ can be taken to be the lift of $QL_t$.

 On the other hand, from  Proposition \ref{compat-units} (applied to $s=r'=n$) we have that
\begin{equation}\label{trazalll-ta}
QL_n\big(\,N_{t/n}(\epsilon)\,\big)= \normalfont \text{Tr}_{\La_t/\La}\big(\,QL_t(\epsilon)\,\big) \pmod{\frac{\pi^n}{\pi_1}P_n};
\end{equation}
taking into account that $\text{Tr}_{t/n}=\text{Tr}_{\La_t/\La} $ since $\La_n=\La$. 

Suppose for a moment that
\begin{equation}\label{dualita}
v_{\La}(x)\geq  2v_{\La}(p)/(\varrho(q-1))
\quad \Longrightarrow \quad 
\Tr\left(\,\left(\frac{\pi^n}{\pi_1}P_n\right)\, l_f(x)\,\right)\subset \pi^n C.
\end{equation}
Then  multiplying both sides of \eqref{trazalll-ta} by $l_f(x)$, for $x\in F(\mu_{\La})$ such that $v_{\La}(x)\geq  2v_{\La}(p)/(\varrho(q-1))$, and  
then  applying $\Tr$ we obtain
\[
\Tr\big(\,QL_n\big(\,N_{t/n}(\epsilon)\,\big)\cdot l_f(x)\,\big)= \mathbb{T}_{\La_t/K}\big(\,QL_t(\epsilon)\cdot l_f(x)\,\big) \pmod{\pi^nC}.
\]
Therefore, combining this with \eqref{nece-ta} we have, for all $\epsilon\in UK_d(\La_t)$ and all $x\in F(\mu_{\La})$ such that $v_{\La}(x)\geq  2v_{\La}(p)/(\varrho(q-1))$, that
\begin{equation}\label{clave-id-ta}
(N_{\La_t/\La}(\epsilon),x)_{\La,n}^1=\Tr\big(\,QL_n\big(\,N_{t/n}(\epsilon)\,\big)\cdot l_f(x)\,\big) \pmod{\pi^nC}.
\end{equation}
 To finally prove the identity, take $\alpha \in UK_d(\La)'=\cap_{m\geq n}UK_d(\La_m)$. Therefore there exists an $\epsilon\in UK_d(\La_t)$ such that $\alpha=N_{\La_t/\La}(\epsilon)$; recalling that $t$ is a large enough integer as it is specified in Theorem \ref{Lubinazo}. Therefore applying the identity \eqref{clave-id-ta} to $\alpha=N_{\La_t/\La}(\epsilon)$ we obtain
 \[
 (\alpha, x)_{\La,n}^1= \Tr  \big(\,QL_n(\alpha)\,l_f(x)\,\big) \pmod{\pi^nC},
 \]
 for all $\alpha\in K_d(\La)'$ and all $x\in F(\mu_{\La})$ such that $v_{\La}(x)\geq  2v_{\La}(p)/(\varrho(q-1))$. Recalling the notation from Equation \eqref{notation-kummer-pairing}, the above identity is equal to 
 \eqref{formula-in-units}.
 
In conclusion, it  remains only to show \eqref{dualita}. This follows by noticing first that $\pi_1^2\OLa$ if and only if  $v_{\La}(x)\geq  2v_{\La}(p)/(\varrho(q-1))$. Second, by noticing that
 $\pi_1^2\OLa$
is the dual of $$\frac{\pi^n}{\pi_1}P_{\La}=\frac{\pi^n}{\pi_1^2}\cdot\frac{1}{D(\La/K)}$$ with respect to the pairing $(y,w)\to \Tr(yw)$ mod $\pi^nC$. Finally, by observing that 
$x\in \pi_1^2\OLa$ if and only if $l_f(x)\in \pi_1^2\OLa$,  by
 Lemma \ref{local-iso-log}.
 
\end{proof}

From   Lemma \ref{lema-impt} and Lemma \ref{units-Iwasawa-formulas} we obtain the following

\begin{coro}\label{Iwa-Id-Wiles group}
Let $\La=\Ka_{\pi,n}$ and  $QL_n$ be the lift from Definition \ref{lift}. Then
\begin{equation}\label{final=}
(\alpha, x)_{\La,n}= \left[\,\Tr  \big(\,QL_n(\alpha)\,l_f(x)\,\big)\,\right]_f(e_{f,n})
\end{equation} 
for all $\alpha\in K_d(\La)'$ and all $x\in F(\mu_{\La})$ such that $v_{\La}(x)\geq 2\,v_{\La}(p)/(\varrho\,(q-1))$.
\end{coro}
\begin{proof}
By Lemma \ref{units-Iwasawa-formulas}, the identity \eqref{final=} holds for all
$\alpha \in UK_d(\La)'$. On the other hand, by Lemma \ref{lema-impt}, \eqref{final=} also holds for all $\alpha\in K_d(\La)$ of the form 
\begin{equation}\label{special-elemi}
\alpha=\{u_1,\dots, u_{d-1},e_{g,n}\}
\end{equation}
with $u_1,\dots, u_{d-1}\in\OLa^*=\{x\in \La\,:\,v_{\La}(x)=0\}$, and $e_{g,n}$ as in Lemma \ref{lema-impt}.
Therefore, it is enough to show that $K_d(\La)'$ is generated by $UK_d(\La)'$
 and all the elements  of the form \eqref{special-elemi}. This is the content of the lemma below.
 \end{proof}
 
 \begin{lemma}
 Keeping the notation from Corollary \ref{Iwa-Id-Wiles group} and Lemma \ref{lema-impt}  we have
 \begin{equation*}\label{norm-iden-key}
K_d(\La)'=UK_d(\La)'\cdot \langle \{u_1,\dots,u_{d-1}, e_{g,n}\}\,|\,u_1,\dots, u_{d-1}\in \OLa^*\rangle, 
  \end{equation*}
  where $\OLa^*=\{x\in \La\,:\,v_{\La}(x)=0\}$.
 \end{lemma}
 \begin{proof}
Let $t\geq n$. We will show that the result is a consequence of the commutative diagram
\[
\begin{CD}
 K_d(\La_t) @>{ \partial_{\La_t} }>> K_{d-1}(\overline{\La_t})   \\
@V{N_{t/n}}VV @VV{N_{\overline{\La_t}/\overline{\La}}}V \\
K_d(\La) @>>{\partial_{\La}}> K_{d-1}(\overline{\La}),
\end{CD}
\]
and the exact sequences
 \begin{equation}\label{exact-seq-L}
0\to UK_d(\La)\to K_d(\La)\xrightarrow[]{\partial_{\La}} K_{d-1}(\overline{}\La)\to 0
 \end{equation}
and
 \begin{equation}\label{exact-seq-L_t}
0\to UK_d(\La_t)\to K_d(\La_t)\xrightarrow[]{\partial_{\La_t}} K_{d-1}(\overline{}\La_t)\to 0
 \end{equation}
(cf. \cite{Morrow} Page 567 and 568), where $\partial_{\La_t}$ and $\partial_{\La}$ denote the boundary maps
 ( cf. \cite{Morrow} Page 567 or  \cite{Zhukov} Chapter 6, Section 6.4.1), and  $\overline{\La_t}$ and $\overline{\La}$ the residue fields of $\La_t$ and $\La$, respectively.
 
 Indeed,  let $\phi \in K_d(\La)'$.
 Then for every $t\geq n$ there exists an $\alpha\in K_d(\La_t)$ such that $N_{t/n}(\alpha)=\phi$. From the exact sequence \eqref{exact-seq-L}
we have for this $\phi$ that there exist units $u_1,\dots, u_{d-1}\in \OLa^*$ and an integer $m\geq0$ such that 
\[
\partial_{\La} (\phi)=\partial (u_1,\dots,u_{d-1}, (e_{g,n})^m);
\]
recalling that we are taking $e_{g,n}$ as a uniformizer for $\La=\Ka_{\pi,n}$. On the other hand, $\beta:=\{u_1,\dots,u_{d-1}, (e_{g,n})^m\}$ is a norm from $\La_t$: $$N_{t/n}(u_1,\dots, (e_{g,t})^m)=\{u_1,\dots,N_{t/n}(e_{g,t})^m \}=\beta$$ ( since $N_{t/n}(e_{g,t})=e_{g,n}$ by \eqref{real-norm-torsion} ). Therefore the element
\[
\delta:=\alpha\cdot \{u_1,\dots, (e_{g,t})^m\}^{-1}\in K_d(\La_t)
\]
satisfies that $\partial_{\La}(N_{t/n}(\delta))=\partial_{\La}(\phi\cdot \beta^{-1})=0$. But from the commutative diagram we then have
$0=\partial_{\La}(N_{t/n}(\delta))=\partial_{\La_t}(\delta)$ (since $\overline{\La_t}=\overline{\La}$), which implies $\delta \in UK_d(\La_t)$ by virtue of  the exact sequence \eqref{exact-seq-L_t}. Therefore $\phi\cdot \beta^{-1}=N_{\La_t/\La}(\delta)\in N_{t/n}(UK_d(\La_t))$. Thus, the element $\sigma:=\phi\cdot \beta^{-1}$ belongs to 
$N_{t/n}(UK_d(\La_t))$ for all $t\geq n$, i.e., $\sigma\in UK_d(\La)'$. In conclusion, we have shown that every element $\phi\in K_d(\La)'$ can be put in the form
 $\sigma \cdot \beta$, as claimed in the statement of the lemma.
\end{proof}

\subsection{Generalized Iwasawa formulas}
We arrived finally at the main result in Section \ref{Comparison of formulas}, namely,  Theorem \ref{Iwasawa-General}. This theorem is as a generalization of Iwasawa's formulas \eqref{Iwasawa} to the Kummer pairing associated to a Lubin-Tate formal group and the higher local field $\La=\Ka_{\pi,n}$. These reciprocity laws coincide with the formulas of Zinoviev (cf. \cite{Zinoviev} Theorem 2.2 ) and  Kurihara (cf. \cite{Kurihara} Theorem 4.4) for the generalized Hilbert symbol and the field $\La=\Qp(\zeta_{p^n})\TT$.

Recall that $\Ka$ denotes the field $K\TT$. In the deduction of  Theorem \ref{Iwasawa-General} we shall use the following result.
\begin{lemma}\label{Wiles-group-gen}
Let $\La=\Ka_{\pi,n}$. The following isomorphism of groups holds
\[
\frac{\,N_{\La/\Ka}(K_d(\La))\,}{\, N_{\La/\Ka}(K_d(\La)')\,}
\, \simeq \,
1+\pi^n\,C.
\]
More specifically, the above quotient is topologically generated by the elements
\[
\{T_1,\dots, T_{d-1},u\} \quad \text{for all $u\in 1+\pi^n C$}.
\]
\end{lemma}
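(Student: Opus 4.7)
The plan is to identify the quotient with the Galois group $\text{Gal}(\Ka_{\pi,\infty}/\La)\simeq 1+\pi^n C$ via Kato's reciprocity map on $\Ka$ combined with the Lubin-Tate isomorphism \eqref{Lubin-Tate-isom}, and then to produce the required generators by pulling back one-dimensional local class field theory on the constant field $K$.

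First, I will introduce the composite homomorphism
\[
\phi:\ K_d(\Ka)\ \xrightarrow{\ \Upsilon_\Ka\ }\ G_\Ka^{ab}\ \twoheadrightarrow\ \text{Gal}(\Ka_{\pi,\infty}/\Ka)\ \xrightarrow{\ \tau\ }\ C^*,
\]
where $\tau$ is the inverse limit of the Lubin-Tate isomorphisms \eqref{Lubin-Tate-isom}. By higher class field theory applied to each finite extension $\Ka_{\pi,s}/\Ka$ (cf.\ \cite{Florez} Theorem 2.1.1), the kernel of $K_d(\Ka)\to\text{Gal}(\Ka_{\pi,s}/\Ka)$ is $N_{\Ka_{\pi,s}/\Ka}(K_d(\Ka_{\pi,s}))$, and under $\tau$ the subgroup $\text{Gal}(\Ka_{\pi,\infty}/\Ka_{\pi,s})$ corresponds to $1+\pi^s C$. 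Taking $s=n$ shows $\phi$ sends $N_{\La/\Ka}(K_d(\La))$ into $1+\pi^n C$, and letting $s\to\infty$ shows that $\phi$ kills $\bigcap_{s\geq n}N_{\La_s/\Ka}(K_d(\La_s))$, which contains $N_{\La/\Ka}(K_d(\La)')$. Hence $\phi$ descends to a homomorphism
\[
\bar\phi:\ \frac{N_{\La/\Ka}(K_d(\La))}{N_{\La/\Ka}(K_d(\La)')}\ \longrightarrow\ 1+\pi^n C.
\]

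To show $\bar\phi$ is an isomorphism, I will argue injectivity and surjectivity separately. For injectivity, if $\phi(N_{\La/\Ka}(\beta))=1$ for some $\beta\in K_d(\La)$, then since $\La\subset \Ka^{ab}$ the standard functoriality of Kato's reciprocity with respect to norms gives the identity
\[
\Upsilon_\Ka(N_{\La/\Ka}(\beta))\big|_{\Ka_{\pi,\infty}}\ =\ \Upsilon_\La(\beta)\big|_{\Ka_{\pi,\infty}}
\]
in $\text{Gal}(\Ka_{\pi,\infty}/\La)$. Thus $\Upsilon_\La(\beta)$ fixes $\Ka_{\pi,\infty}=\bigcup_{s\geq n}\La_s$, which by class field theory applied to each $\La_s/\La$ forces $\beta\in \bigcap_{s\geq n}N_{\La_s/\La}(K_d(\La_s))=K_d(\La)'$, and consequently $N_{\La/\Ka}(\beta)\in N_{\La/\Ka}(K_d(\La)')$. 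For surjectivity and the explicit generation, I will fix $u\in 1+\pi^n C$ and observe that $\theta_K(u)$ fixes $K_{\pi,n}$ by one-dimensional Lubin-Tate theory, so $u=N_{K_{\pi,n}/K}(v)$ for some $v\in K_{\pi,n}^*$. Then
\[
\{T_1,\dots,T_{d-1},u\}=N_{\La/\Ka}\bigl(\{T_1,\dots,T_{d-1},v\}\bigr)\in N_{\La/\Ka}(K_d(\La)),
\]
and Lemma \ref{one-two-connec} together with the Lubin-Tate isomorphism gives $\bar\phi(\{T_1,\dots,T_{d-1},u\})=u$. As $u$ ranges over $1+\pi^n C$ these images exhaust the whole group, which simultaneously proves surjectivity and the (topological) generation statement.

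The key technical point, and the only nontrivial ingredient beyond standard class field theory, is the functoriality identity $\Upsilon_\Ka\circ N_{\La/\Ka}=\text{res}_{\Ka^{ab}}\circ \Upsilon_\La$ used in the injectivity step. This is a standard compatibility of Kato's reciprocity with norms and restriction, consistent with the commutative diagram already invoked in the proof of Lemma \ref{one-two-connec}; everything else reduces to the comparison between higher-dimensional reciprocity on $\Ka$ and the one-dimensional reciprocity on $K$ that Lemma \ref{one-two-connec} provides.
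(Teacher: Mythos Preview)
Your proof is correct and follows a genuinely different route from the paper's. The paper first establishes, for each $m\geq n$, the finite-level isomorphism
\[
N_{\La/\Ka}(K_d(\La))\big/N_{\La_m/\Ka}(K_d(\La_m))\ \simeq\ (1+\pi^n C)/(1+\pi^m C)
\]
via higher class field theory and the Lubin--Tate isomorphism \eqref{Lubin-Tate-isom}, and then passes to the limit using the identification $N_{\La/\Ka}(K_d(\La)')=\bigcap_{m\geq n}N_{\La_m/\Ka}(K_d(\La_m))$ (stated without further comment). For the generation statement it switches to $K_d^{\text{top}}(\Ka)$, writes down the standard list of topological generators, and explicitly pins down the norm subgroup $\mathfrak{M}_m$ for each $m$.

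Your argument bypasses both the inverse-limit manoeuvre and the explicit generator bookkeeping: you build the isomorphism in one stroke as the map $\bar\phi$ induced by $\Upsilon_\Ka$, get injectivity from the norm--restriction compatibility of Kato's reciprocity (which in fact \emph{proves} the reverse inclusion $\bigcap_m N_{\La_m/\Ka}(K_d(\La_m))\subset N_{\La/\Ka}(K_d(\La)')$ that the paper takes for granted), and get surjectivity together with the explicit generators by pulling back one-dimensional Lubin--Tate theory through Lemma \ref{one-two-connec}. This is more conceptual and actually yields a slight strengthening: your elements $\{T_1,\dots,T_{d-1},u\}$ already \emph{surject} onto $1+\pi^n C$, not merely topologically generate. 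One cosmetic remark: with the usual normalization of $\theta_K$ in Lubin--Tate theory one finds $\bar\phi(\{T_1,\dots,T_{d-1},u\})=u^{-1}$ rather than $u$, but since $u\mapsto u^{-1}$ is a bijection of $1+\pi^n C$ this is immaterial.
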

\begin{proof}
By Higher Class Field Theory we have, for $m\geq 1$, that
\begin{equation}\label{HCFT-iso}
K_d(\Ka)/N_{\Ka_{\pi,m}/\Ka}(K_d(\Ka_{\pi,m}))
\, \simeq\,
 \text{Gal}(\Ka_{\pi,m}/\Ka ).
\end{equation}
But by \eqref{Lubin-Tate-isom}
\[
\text{Gal}(\Ka_{\pi,m}/\Ka)\,
\,\simeq  \, 
(C/\pi^mC)^{\times}
\, \simeq  \,
k_{K}^{\times}\times \big(\,(1+\pi C)/(1+\pi^m C)\,\big).
\]
Therefore, for all $m\geq n$,
\[
N_{\Ka_{\pi,n}/\Ka}(K_d(\Ka_{\pi,n}))/N_{\Ka_{\pi,m}/\Ka}(K_d(\Ka_{\pi,m}))\simeq (1+\pi^n C)/(1+\pi^m C).
\]
Since $N_{\Ka_{\pi,n}/\Ka}(K_d(\Ka_{\pi,n}')=\cap_{m\geq n} N_{\Ka_{\pi,m}/\Ka}(K_d(\Ka_{\pi,m}))$, this implies
\[
N_{\Ka_{\pi,n}/\Ka}(K_d(\Ka_{\pi,n}))/N_{\Ka_{\pi,n}/\Ka}(K_d(\Ka_{\pi,n})')
\, \simeq \,
 1+\pi^n C.
\]

As for the second statement in this lemma, notice first that we can replace the Milnor K-groups $K_d(\cdot)$ by
 the topological Milnor K-groups  $K_d^{\text{top}}(\cdot)$   endowed with the Parshin topology (cf. \cite{Zhukov} Section 6). For simplicity in the notation let us consider the case $d=2$, namely $\Ka=K\{\{T\}\}$ and $\Ka_{\pi,m}=K_{\pi,m}\{\{T\}\}$ ($m\geq 1$).    For the group $K_d^{\text{top}}(\Ka)$ we have the following set of topological generators (cf. \cite{Zhukov} Section 6.5) associated to the local uniformizers $T$ and $\pi$: 
\begin{enumerate}
\item \label{gen-1} $\{T,\pi\}$, 
\item\label{gen-2} $\{T,\theta \}$,  
\item\label{gen-2} $\{ \pi,\theta \}$, 
\item\label{gen-2} $\{ \pi,1+\theta\,\pi^k\,T^j \}$,
\item\label{gen-2} $\{ T,1+\theta\,\pi^k\,T^j \}$, 
\end{enumerate}
where   $\theta\in \mu_{q-1}:=$ the group of $(q-1)$th roots of 1, $k>0$ and $j\in \Z$.
 By analyzing the isomorphism \eqref{HCFT-iso} we see that $N_{\Ka_{\pi,m}/\Ka}(K_d^{\text{top}}(\Ka_{\pi,m}))$ must be generated by
 elements of the form
 \begin{enumerate}
\item \label{gen-1} $\{T,\pi\}$ ,
\item\label{gen-2} $\{ \pi,1+\theta\,\pi^k\,T^j \}$, $j\in \Z$,
\item\label{gen-2} $\{ T,1+\theta\,\pi^k\,T^j \}$, $ 0\neq j \in \Z$,
\item\label{gen-2} $\{ T,1+\theta\,\pi^k\}$, $k\geq m$.
\end{enumerate}
Indeed, if we let $\mathfrak{M}_m$ be the subgroup of $K_d^{\text{top}}(\Ka)$ generated by the elements above, 
it follows that $N_{\Ka_{\pi,m}/\Ka}(K_d^{\text{top}}(\Ka_{\pi,m}))\subset \mathfrak{M}_m$ and $K_d^{\text{top}}(K)/\mathfrak{M}_m\simeq (C/\pi^mC)^{\times}$, which implies $N_{\Ka_{\pi,m}/\Ka}(K_d^{\text{top}}(\Ka_{\pi,m}))=\mathfrak{M}_m$.

Therefore $K_d^{\text{top}}(\Ka)/N_{\Ka_{\pi,m}/\Ka}(K_d^{\text{top}}(\Ka_{\pi,m}))$ is generated by elements of the form $\{T,u\}$ with $u\in (C/\pi^mC)^{\times}$. This implies that the quotient group $$N_{\Ka_{\pi,n}/\Ka}(K_d^{\text{top}}(\Ka_{\pi,n}))/N_{\Ka_{\pi,m}/\Ka}(K_d^{\text{top}}(\Ka_{\pi,m}))$$ is generated by elements of the form $\{T,u\}$ with $u\in (1+\pi^n\,C)/(1+\pi^m\,C)$ for all $m\geq n$. This concludes the proof.
\end{proof}

We can now prove the main result.

\begin{thm}\label{Iwasawa-General}
Let $\La=\Ka_{\pi,n}$ and $QL_n$ be the lift from Definition \ref{lift}. Then
\begin{equation}\label{Iwasawa-formula}
(\alpha,x)_{\La,n}=\left[\,\Tr \big( QL_n(\alpha)\, l_f(x) \big)\,\right]_f(e_{f,n})
\end{equation}
for all $\alpha\in K_n(\La)$ and $x\in F(\mu_\La)$ such that $v_{\La}(x)\geq 2\,v_{\La}(p)/(\varrho\,(q-1))$. 
\end{thm}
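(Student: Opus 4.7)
Both sides of \eqref{Iwasawa-formula} are continuous homomorphisms $\varphi,\psi\colon K_d(\La)\to \kappa_{f,n}$ in the variable $\alpha$, with $x$ held fixed, and by Corollary \ref{Iwa-Id-Wiles group} they already agree on the subgroup $K_d(\La)'$. Writing $\Phi:=\varphi \ominus_f \psi$, the theorem reduces to showing $\Phi=0$ on representatives for the quotient $K_d(\La)/K_d(\La)'$.

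These representatives are supplied by Kolyvagin's change-of-uniformizer trick. For each $u\in 1+\pi^n C$, set $\xi=u\pi$ and choose $g\in \Pi\Lambda_{\xi}$; by Proposition \ref{importa} we have $K_{\xi,n}=K_{\pi,n}$, so for the $\epsilon\in \hat{C}_{nr}$ satisfying $\text{Fr}_K(\epsilon)=\epsilon u$, the element $e_{g,n}:=[u^{-n}\epsilon]_{f,g}(e_{f,n})$ lies in $\La$. The lemma immediately preceding Corollary \ref{Iwa-Id-Wiles group} asserts that $\Phi$ vanishes on $\alpha_u:=\{T_1,\dots,T_{d-1},e_{g,n}\}$ (as well as on variants in which the first $d-1$ slots are arbitrary units). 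Combining the standard Lubin--Tate norm formula $N_{K_{\xi,n}/K}(e_{g,n})=\pm u\pi$ with $\{T_1,\dots,T_{d-1},-1\}=0$ (using $p\neq 2$) and with $\{T_1,\dots,T_{d-1},\pi\}=\pm N_{\La/\Ka}(\{T_1,\dots,T_{d-1},e_{f,n}\})$, the latter lying in $N_{\La/\Ka}(K_d(\La)')$ because $N_{\La_t/\La}(e_{f,t})=e_{f,n}$ for all $t\geq n$ forces $\{T_1,\dots,T_{d-1},e_{f,n}\}\in K_d(\La)'$, the projection formula gives
\[
N_{\La/\Ka}(\alpha_u)\equiv \{T_1,\dots,T_{d-1},u\}\pmod{N_{\La/\Ka}(K_d(\La)')}.
\]
By Lemma \ref{Wiles-group-gen} the classes $\{T_1,\dots,T_{d-1},u\}$ topologically generate $N_{\La/\Ka}(K_d(\La))/N_{\La/\Ka}(K_d(\La)')$, and hence so do the classes of the $\alpha_u$.

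Given $\alpha\in K_d(\La)$, the preceding paragraph produces a topological combination $\alpha^{\ast}\in K_d(\La)'+\sum_u \Z\cdot \alpha_u$ with $N_{\La/\Ka}(\alpha-\alpha^{\ast})=0$. Since $\Phi$ vanishes on $K_d(\La)'$ and on each $\alpha_u$ and is continuous in the Parshin topology, $\Phi(\alpha^{\ast})=0$, so the theorem reduces to showing $\Phi$ vanishes on $\ker(N_{\La/\Ka}|_{K_d(\La)})$. For the Kummer-pairing side this would follow from the reciprocity-map compatibility $\Upsilon_{\Ka}\circ N_{\La/\Ka}=\text{res}_{\Ka}\circ \Upsilon_{\La}$ together with the inclusion $\La_{\infty}=\bigcup_t \La_t\subset \Ka^{\text{ab}}$: if $N_{\La/\Ka}(\alpha)=0$ then $\Upsilon_{\La}(\alpha)$ is trivial on $\Ka^{\text{ab}}$ and hence on any $z$ with $f^{(n)}(z)=x$, giving $\varphi(\alpha)=0$. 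A parallel argument for $\psi$, based on Definition \ref{lift} and the continuity of the generalized trace, would yield $\psi(\alpha)=0$.

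\textbf{Main obstacle.} The delicate step is the kernel-control argument at the end: bridging Lemma \ref{Wiles-group-gen}, which describes a quotient of the norm-image $N_{\La/\Ka}(K_d(\La))\subset K_d(\Ka)$, to the quotient $K_d(\La)/K_d(\La)'$ on which $\Phi$ is actually defined. Verifying that both $\varphi$ and $\psi$ annihilate $\ker(N_{\La/\Ka}|_{K_d(\La)})$ requires combining functoriality of the higher reciprocity map with a careful reading of the lift in Definition \ref{lift}, and throughout one must check that the Parshin-topological generation and all continuity assertions are compatible with the various constructions.
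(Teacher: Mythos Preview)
Your overall architecture matches the paper: establish the formula on $K_d(\La)'$ via Corollary \ref{Iwa-Id-Wiles group}, produce the elements $\alpha_u=\{T_1,\dots,T_{d-1},e_{g,n}\}$ on which the formula holds by the unnamed lemma, and use Lemma \ref{Wiles-group-gen} to see that these, together with $K_d(\La)'$, account for all of $K_d(\La)$.

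The genuine gap is in your kernel-control step. Your claim that $\Upsilon_\La(\alpha)$ being trivial on $\Ka^{\mathrm{ab}}$ forces it to fix any $z$ with $f^{(n)}(z)=x$ is false: for generic $x\in F(\mu_\La)$ the extension $\La(z)/\Ka$ is \emph{not} abelian, because $\text{Gal}(\La/\Ka)\simeq (C/\pi^n C)^\times$ acts nontrivially on $\text{Gal}(\La(z)/\La)\hookrightarrow \kappa_{f,n}$ via $\tau_n$. So $z\notin \Ka^{\mathrm{ab}}$ in general, and the direct argument for $\varphi(\alpha)=0$ collapses; the promised ``parallel argument for $\psi$'' would be even harder.

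The fix uses exactly the ingredients you already listed, but aimed differently. From $N_{\La/\Ka}(\alpha)=0$ and the functoriality $\Upsilon_\Ka\circ N_{\La/\Ka}=\mathrm{res}\circ\Upsilon_\La$ you get $\Upsilon_\La(\alpha)|_{\Ka^{\mathrm{ab}}}=1$; since $\La_t\subset \Ka^{\mathrm{ab}}$ for every $t$, higher class field theory gives $\alpha\in N_{\La_t/\La}(K_d(\La_t))$ for all $t$, i.e.\ $\alpha\in K_d(\La)'$. Equivalently, $N_{\La/\Ka}$ induces at each finite level the \emph{injection} $\text{Gal}(\La_t/\La)\hookrightarrow\text{Gal}(\La_t/\Ka)$, so its kernel lies in $K_d(\La)'$. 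Hence $\Phi$ already vanishes on the kernel by Corollary \ref{Iwa-Id-Wiles group}, and no separate analysis of $\psi$ (or of continuity) is needed: since $N_{\La/\Ka}$ is injective on $K_d(\La)/K_d(\La)'$ and the $\alpha_u$ hit every class $\{T_1,\dots,T_{d-1},u\}$ in the target, they already surject onto $K_d(\La)/K_d(\La)'$ as a set.

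One minor correction: the identity $N_{\La_t/\La}(e_{f,t})=e_{f,n}$ you invoke is \eqref{norm-torsion}, which is stated only for $g\in\Pi\Lambda_\pi$, not for arbitrary $f\in\Lambda_\pi$. Either replace $e_{f,n}$ there by $e_{g_0,n}=[1]_{f,g_0}(e_{f,n})$ for any fixed $g_0\in\Pi\Lambda_\pi$, or simply cite the proof of Lemma \ref{Wiles-group-gen}, where $\{T_1,\dots,T_{d-1},\pi\}\in N_{\La/\Ka}(K_d(\La)')$ is established directly.
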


\begin{proof}
First of all, the identity \eqref{Iwasawa-formula} is true for all $\alpha \in K_d(\La)'$ by Corollary \ref{Iwa-Id-Wiles group}.

Now let $u\in 1+\pi^n C$ and set $\xi=\pi\,u$. Take $g\in \Pi \Lambda_{\xi}$ and let $e_{g,n}=[u^{-n}\,\epsilon ]_{g,f}\,(e_{f,n})$. Then $N_{L/K}(e_{g,n})=(-1)^{q^n-q^{n-1}}\xi=\xi$ since the irreducible monic polynomial for $e_{g,n}$ over $K$ is $g^{(n)}/g^{(n-1)}$; which has $q^{n}-q^{n-1}$ roots and constant coefficient $\xi$. By Lemma \ref{lema-impt} the identity \eqref{Iwasawa-formula} holds for $\{T_1,\dots, T_{d-1},e_{g,n}\}$. Since $$N_{\La/\Ka}(\{T_1,\dots T_{d-1},e_{g,n}\})=\{T_1,\dots, T_{d-1},\xi\}=\{T_1,\dots, T_{d-1},\pi\,u\},$$ 
and $u\in 1+\pi^n C$ was chosen arbitrarily, then by Lemma \ref{Wiles-group-gen} this implies the result for all $\alpha\in K_{d}(\La)$.

\end{proof}

\noindent\emph{Acknowledgments.}
The author would like to thank V. Kolyvagin for suggesting the problem treated in this article,  for  reading the manuscript and providing valuable comments and improvements. The author would also like to thank the anonymous  referee for the careful review of the paper, and for the corrections and  valuable comments for improvements.

%________________________
%_________________________________________
%_________________________________________
%_________________________________________
%_________________________________________


\begin{thebibliography}{}

\bibitem{Hasse} E. Artin and H. Hasse,
\textit{Die beiden Erg\"{a}nzungss\"{a}tze zum Reziprozit\"{a}tsgesetz
der
$l^n$-ten Potenzreste im   K\"{o}rper der
$l^n$-ten Einheitswurzeln}, Abh. Math. Sem.
Univ. Hamburg \textbf{6} (1928) 146-162.

\bibitem{cassels} J.W.S. Cassels and A. Frolich, \textit{Algebraic Number Theory}. 
Proc. Instructional Conf. 2nd edn. (London Mathematical Society, 2010).


%\bibitem{Coates-Wiles} J. Coates and A. Wiles, \textit{
%On the Birch and Swinnerton-Dyer conjecture}.  Invent. Math.
%\textbf{39} (1977), 223-251.


%\bibitem{Fesenko-abe} I. B. Fesenko,  
%\textit{Abelian extensions of complete discrete valuation fields}, 
% S\'{e}minarie de Th\'{e}orie des Nombres Paris (1933-1934). LMS \textbf{235}, 47-75.
 
%\bibitem{Fesenko1} I. B. Fesenko,  
%Sequential topologies
%and quotients of Milnor $K$-groups of higher local fields, 
% (Russian)\textit{ Algebra i Analiz} \textbf{13}(3) (2001) 198-221;
% \textit{translation in  St. Petersburg Math. J.} \textbf{13}(3) (2002) 485-501.
 
 \bibitem{Zhukov} I. Fesenko and M. Kurihara, 
\textit{Invitation to Higher Local Fields}, Vol. 3  
 (Geometry and Topology Monographs, 2000).
 
 \bibitem{Fes}  I. Fesenko and S. Vostokov. 
 \textit{Local fields and their extensions}, vol. 121. American Mathematical Soc., 2002.

\bibitem{Fesenko} 
I.B. Fesenko and S. V. Vostokov, 
\textit{Local fields and their extensions}, 
2nd edn. (Providence, R.I.: American Mathematical Society, 2002).


\bibitem{Florez} J. Florez, 
\textit{The norm residue symbol for higher local fields}, arXiv: 1702.04382.

\bibitem{Fukaya} T. Fukaya, 
\textit{Explicit reciprocity laws for $p$-divisible groups over higher dimensional local fields},
 J. Reine Angew. Math.  \textbf{531} (2001) 61-119.

%\bibitem{Gille} P. Gille, and T. Szamuely, 
%\textit{Central simple algebras and Galois cohomology},
%vol. 101 of Cambridge Studies in Advanced Mathematics. 
%Cambridge University Press, Cambridge, 2006.


\bibitem{Iwasawa} 
K. Iwasawa,
\textit{On explicit formulas for the norm residue symbol}, J. Math. Soc.
Japan \textbf{20} (1968) 151-164.

%\bibitem{Iwasawa2} K. Iwasawa,
%\textit{
%On some modules in the theory of cyclotomic fields}. J. Math. Soc.
%Japan \textbf{16} (1963), 151-164.

\bibitem{Kato}
K. Kato, 
\textit{A generalization of local class field theory by using $K$-groups
II}, Proc. Japan Acad. Ser. A Math. Sci.\textbf{ 54}(8) (1978) 250-255.

%\bibitem{Kato2} K. Kato, \textit{Generalized explicit reciprocity laws}.
% Adv. Stud. Contemp. Math.
%(Pusan) \textbf{1} (1999), 57-126.

%\bibitem{Kato3} K. Kato, \textit{p-adic Hodge theory and special values of zeta functions of elliptic cusp forms}.
%Ast\'{e}risque \textbf{295}(2004), ix, 117-290, 
%Cohomologies $p$-adiques et applications arithm\'{e}tiques. III.  MR 2104361

\bibitem{koly} V. Kolyvagin, 
\textit{Formal Groups and the Norm Residue Symbol}, Math. USSR Izv. \textbf{15} (1980) 289-348.

%\bibitem{Kummer} E. Kummer,
%\textit{\"{U}ber die allgemeinen 
%Reziprozit\"{a}tsgesetze der Potenzreste}. 
%J. Reine Angew. Math.\textbf{56} (1858), 270-279.

\bibitem{Kurihara} 
 M. Kurihara, 
\textit{The exponential homomorphism for the Milnor $K$-groups and an explicit reciprocity law}, J. Reine Angew.  \textbf{498} (1998) 201-221.

\bibitem{Lang}
S. Lang, \textit{Algebra},  Revised Third Edition ( Springer, 2005).




\bibitem{Morrow}
M. Morrow, \textit{Continuity of the norm map on milnor $K$-theory}. Journal of $K$-Theory, 9 (3):565–577, 2012.

%\bibitem{Kurihara2} M. Kurihara, 
%\textit{Two types of complete discrete valuation fields.}, Geometry \& Topology Monographs, Volume 3, Part I, section 12, pages 109-112.

%\bibitem{Lang} S. Lang, \textit{Algebra}. $2^{\text{nd}}$ Edition. Springer 2005

%\bibitem{Milnor} J. Milnor, \textit{Algebraic K-theory and quadratic forms}. Invent. Math. \textbf{9} (1970), 318-344.


%\bibitem{Morrow} M. Morrow, \textit{An introduction to higher dimensional local fields and adeles}. 2012.

%\bibitem{Neu} J. Neukirch, \textit{Algebraic Number Theory}. Springer 1991.

%\bibitem{Parshin} A. N. Parshin, \textit{Local class field theory}. Trudy Mat. Inst. Steklov. \textbf{165} (1984), 143-170; English transl. in Proc. Steklov Inst. Math. 1985, no. 4(165).

%\bibitem{Scott} W. R. Scott, \textit{Group theory}. Dover 1987.

\bibitem{silver}
J. Silverman, \textit{The arithmetic of elliptic curves}, 2nd edn. (Springer, 2009).


%\bibitem{Vostokov} S. V. Vostokov, \textit{Explicit construction of class field theory for a multidimensional local
%field}. Izv. Akad. Nauk SSSR Ser. Mat. (1985) no.2; English translation in Math. USSR
%Izv. \textbf{26}(1986), 263–288.

%\bibitem{Vostokov2} S. V. Vostokov, 
%\textit{Hilbert pairing on a multidimensional complete field}.
% Trudy Mat. Inst.
%Steklova, (1995); English translation in Proc. Steklov Inst. of Math. \textbf{208}(1995), 72–83


\bibitem{Wiles} A. Wiles,
\textit{Higher explicit reciprocity laws}, Ann. of Math.
\textbf{107}(2) (1978)  235-254.

\bibitem{Zinoviev} A. N. Zinoviev, 
\textit{Generalized Artin-Hasse and Iwasawa formulas for the Hilbert symbol in a high-dimensional local field}, JMS \textbf{124} (1) (2004) 4806-4821.


\end{thebibliography}
\end{document}